\newtheorem{thm}{Theorem}[section]
\newtheorem{cor}[thm]{Corollary}
\newtheorem{pr}[thm]{Proposition}
\newtheorem{lm}[thm]{Lemma}
\theoremstyle{definition}
\newtheorem{defn}[thm]{Definition}
\newtheorem{notn}[thm]{Notation}
\theoremstyle{remark}
\newtheorem{rem}[thm]{Remark}
\newcommand{\tit}{\textit}
\newcommand{\mfrak}{\mathfrak}
\newcommand{\cH}{{\mathcal{H}}}
\newcommand{\cI}{{\mathcal{I}}}
\newcommand{\cJ}{{\mathcal{J}}}
\newcommand{\cN}{{\mathcal{N}}}
\newcommand{\al}{\alpha}
\newcommand{\be}{\beta}
\newcommand{\ga}{\gamma}
\newcommand{\Ga}{\Gamma}
\newcommand{\de}{\delta}
\newcommand{\la}{\lambda}
\newcommand{\La}{\Lambda}
\newcommand{\si}{\sigma}
\newcommand{\om}{\omega}
\newcommand{\Om}{\Omega}
\newcommand{\ze}{\zeta}
\newcommand{\R}{\mathbb R}
\newcommand{\bH}{\mathbb H}
\newcommand{\sub}{\subset}
\newcommand{\nin}{\notin}
\newcommand{\ol}{\overline}
\newcommand{\dis}{\displaystyle}
\newcommand{\Rar}{\Rightarrow}
\newcommand{\lb}{\left(}
\newcommand{\rb}{\right)}
\newcommand{\ten}{\otimes}              
\newcommand{\OL}{\Omega^1_{B|A}}
\newcommand{\oml}{\omega^1_{B|A}}
\newcommand{\omk} {\omega^1_{A}}
\newcommand{\OK} {\Omega^1_{A}}
\newcommand{\js} {\mathcal{J}_{\si}}
\newcommand{\is} {\mathcal{I}_{\si}}
\newcommand{\ns}{\mathcal{N}_{\si}}
\newcommand{\dn} {\overline{\Delta_N}}
\newcommand{\I}{\mathbb{I}}
\newcommand{\z}{\mathfrak{z}}
\newcommand{\m}{\mathfrak{m}}
\newcommand{\dlo}{\delta \log} 
\newcommand{\T}{\Theta}
\DeclareMathOperator{\tr}{Tr}
\DeclareMathOperator{\dl}{dlog} 
\DeclareMathOperator{\rsw}{rsw} 
\DeclareMathOperator{\Gal}{Gal}
\DeclareMathOperator{\Sw}{Sw}
\DeclareMathOperator{\Art}{Art}
\let\c@equation\c@thm
\numberwithin{equation}{section}
\begin{document}

\title{RAMIFICATION THEORY FOR DEGREE $p$ EXTENSIONS OF ARBITRARY VALUATION RINGS IN MIXED CHARACTERISTIC $(0,p)$}

\author{Vaidehee Thatte}
\date{June 23, 2017}
\maketitle
\begin{abstract} 

We previously obtained a generalization and refinement of results about the ramification theory of Artin-Schreier extensions of discretely valued fields in characteristic $p$ with perfect residue fields to the case
of fields with more general valuations and residue fields. As seen in \cite{V}, the ``defect" case gives rise to many interesting complications. \\  In this paper, we present analogous  results  for degree $p$ extensions of arbitrary valuation rings in mixed characteristic $(0,p)$ in a more general setting.  More specifically, the only assumption here is that the base field $K$  is  henselian. In particular, these results are true for defect extensions even if the rank of the valuation is greater than $1$. A similar method also works in equal characteristic, generalizing the results of \cite{V}. 

\end{abstract}

\tableofcontents
\newpage
\section{Introduction}

Let $K$ be a henselian valued field of mixed characteristic $(0,p)$ with arbitrary valuation and $L|K$ a non-trivial Galois extension of degree $p$. We present a generalization and refinement of the classical ramification theory in this case. In \cite{V}, we considered Artin-Schreier extensions, when the defect is trivial or the valuation is of rank $1$. The results we present in this paper are true without such assumptions. We also remark that similar methods can be used to improve the results of \cite{V} and it is possible to remove the aforementioned assumptions.

First we consider Kummer extensions $L|K$, where $K$ contains a primitive $p^{th}$ root $\ze$ of unity.  The general case is then reduced to this case, by using tame extensions and Galois invariance.

\subsection{Invariants of Ramification Theory} 
Let $K$ be a valued field of characteristic $0$ with henselian
valuation ring $A$,  valuation $v$ and residue field
$k$ of characteristic $p>0$. We assume that $K$ contains a primitive $p^{th}$ root of $1$, let us denote it by $\ze_p=\ze$. Let $L=K(\alpha)$ be the (non-trivial) Kummer extension defined
by $\al^p =h$ for some $h \in K^{\times} $. For   any $a \in K^{\times}$, $h$ and $ha^p$ give rise to the same extension $L$.   Let $B$ be the
integral closure of $A$ in $L$. Since $A$ is henselian, it
follows that $B$ is a valuation ring. Let $w$ be the unique valuation
on $L$ that extends $v$ and let $l$ denote the residue field
of $L$. We denote  the value group of
$K$ by $\Ga_K :=v(K^{\times})$. The Galois group $\Gal(L|K)=G$ is cyclic of order $p$,
generated by $ \si : \al \mapsto \ze\al$. Let $\z:= \ze -1$

Let $\mathfrak{A} = \{h \in K \ \mid $ the solutions of
the equation $\al^p=h$ generate $L$ over $K \} $.
Consider the ideals $\js$ and $\cH$, of $B$ and $A$ respectively,
defined as below:
\begin{equation} 
\js = \ \lb \left\{ \frac{\si(b)}{b}-1 \mid b \in L^{\times}
\right\} \rb \sub B
\end{equation} 

\begin{equation}
\cH= \ \lb \left\{ \frac{\z^p}{h-1} \mid h \in \mathfrak{A} \right\}
\rb \sub A
\end{equation}

It is not apparent from the definition that $\cH$ is indeed a subset of $A$, we prove that in \Cref{H}. Our first result compares these two invariants via the norm map
$N_{L|K}=N$, by considering the ideal $\ns$ of $A$ generated by
the elements of $N(\js)$.
We also consider the ideal $\dis \is = \lb \{ \si(b) -b \mid b
\in B \} \rb$ of $B$. The ideals $\is$ and $\js$ play the roles
of $i(\si)$ and $j(\si)$ (the Lefschetz numbers in the classical
case, as explained in 2.2), respectively, in the generalization.
\subsection{Main Results} We will prove the following results in sections 4 and  6, respectively.  Then extend them to the non-Kummer case, in section 7. 

\begin{thm}\label{hn} If $L|K$ is as in 1.1, we have
the following equality of ideals of $A$:
\begin{equation}
  \cH = \ns
\end{equation}
\end{thm}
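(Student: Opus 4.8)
The plan is to establish the two inclusions $\cH\sub\ns$ and $\ns\sub\cH$ separately. Throughout I will use that $v(N_{L|K}(x))=p\,w(x)$ for $x\in L^\times$ (since $w$ is the unique extension of $v$ and $N_{L|K}(x)=\prod_{g\in G}g(x)$), that $p=\prod_{j=1}^{p-1}(1-\ze^j)$ has value $(p-1)v(\z)$ so that $v(\z^p)=p\,v(\z)$, and that $\z^p=N_{L|K}(\ze-1)=N_{L|K}\!\big(\tfrac{\si(\al)}{\al}-1\big)\in\ns$. Also, by \Cref{H} every $h\in\mathfrak{A}$ satisfies $v(h-1)\le v(\z^p)$, so $\de:=\sup_{h\in\mathfrak{A}}v(h-1)$ lies in $[0,\,v(\z^p)]$. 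For the first inclusion the crucial input is the norm identity $N_{L|K}(1-t\al)=1-t^p h$ for $t\in K^\times$, which follows from $\prod_{j}(X-\ze^j\al)=X^p-h$. Since $\tfrac{\si(1-t\al)}{1-t\al}-1=\tfrac{-\z\, t\al}{1-t\al}$ and $N_{L|K}(\al)=(-1)^{p-1}h$, one computes
\[
  N_{L|K}\!\Big(\tfrac{\si(1-t\al)}{1-t\al}-1\Big)=\frac{\z^p\, h'}{h'-1},\qquad h':=t^p h,
\]
and $h'$ ranges over all of $\mathfrak{A}$ as $t$ ranges over $K^\times$. If $h'$ is a unit this equals $\tfrac{\z^p}{h'-1}$ up to a unit; if $v(h')\ne 0$ then $v(h'-1)\le 0$, so $\tfrac{\z^p}{h'-1}\in(\z^p)\sub\ns$. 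Hence every generator of $\cH$ lies in $\ns$.

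For $\ns\sub\cH$ it suffices, by $v(N_{L|K}(x))=p\,w(x)$, to prove
\[
  w\!\Big(\tfrac{\si(b)}{b}-1\Big)\ \ge\ v(\z)-\tfrac{\de}{p}\qquad\text{for all }b\in L^\times,
\]
with \emph{strict} inequality whenever $\de$ is not attained. Since $\tfrac{\si(b)}{b}-1$ depends only on $b$ modulo $K^\times$, I may take $b=f(\al)$ with $f\in K[X]$, $1\le\deg f\le p-1$; then $\si(b)-b=f(\ze\al)-f(\al)=\z\,\al\, g(\al)$ where $g$ is the first divided difference of $f$, of degree $\deg f-1$. Because $[L:K]=p$, exactly one of the ramification index, the residue degree $[l:k]$, and the defect equals $p$. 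In the first case the values $w(\al^i)$, $0\le i\le p-1$, are pairwise incongruent modulo $\Ga_K$, so no cancellation occurs in $f(\al)$ or $g(\al)$, a one-line estimate gives $w\big(\tfrac{\si(b)}{b}-1\big)\ge v(\z)$, and one checks directly that $\de=0$. In the second case, scaling so that $w(\al)=0$, the residue field $l=k(\ol{\al})$ is purely inseparable of degree $p$, so $1,\ol{\al},\dots,\ol{\al}^{\,p-1}$ are $k$-linearly independent; again there is no cancellation at the minimum, $w\big(\tfrac{\si(b)}{b}-1\big)\ge v(\z)$, and $\de=0$. In both cases $\ns=(\z^p)=\cH$.

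The substantive case is the defect case. After scaling one may assume $w(\al)=0$ and $\al\equiv 1$; then a short computation (via $X^p-1=\prod_j(X-\ze^j)$) identifies $\de/p=\operatorname{dist}(\al,K):=\sup_{c\in K}w(\al-c)$, which is $\le v(\z)$ by \Cref{H} and which — because $l=k$ and the value group of $L$ equals $\Ga_K$ — is \emph{not} attained (a closest $c_0\in K$ could be improved using these two facts). Factor $f=c_d\prod_{i=1}^{d}(X-\th_i)$ over $\ol{K}$; each $\th_i$ generates an extension of $K$ of degree $<p$, hence tame, over which the defect persists and $\al$ is no closer than over $K$, so $w(\al-\th_i)<\operatorname{dist}(\al,K)$. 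Using $\tfrac{f^{(m)}(\al)}{m!}=c_d\sum_{|S|=d-m}\prod_{i\in S}(\al-\th_i)$ together with the Taylor expansion $g(\al)=\sum_{m\ge1}\tfrac{f^{(m)}(\al)}{m!}(\z\al)^{m-1}$ and the bounds $w(\al-\th_i)<\operatorname{dist}(\al,K)\le v(\z)$, one obtains $w(g(\al))>w(f(\al))-\operatorname{dist}(\al,K)$, whence
\[
  w\!\Big(\tfrac{\si(b)}{b}-1\Big)=v(\z)+w(g(\al))-w(f(\al))>v(\z)-\tfrac{\de}{p}.
\]
Applying $v(N_{L|K}(x))=p\,w(x)$ and recalling that $\cH=\{y\in A:\ v(y)>v(\z^p)-\de\}$ here, this gives $N_{L|K}\big(\tfrac{\si(b)}{b}-1\big)\in\cH$, completing the proof.

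I expect the main obstacle to be the defect case, and within it the two structural facts that $\operatorname{dist}(\al,K)$ is not attained and is unchanged under the tame extensions $K(\th_i)/K$ — these encode that the defect of $L|K$ is already ``visible over $K$'' and survives tame base change; once they are in hand, the divided-difference/Taylor estimate above is a routine, if slightly delicate, valuation computation. By contrast, the two defectless cases reduce to ``no cancellation'' bookkeeping, and $\cH\sub\ns$ is essentially the single norm identity $N_{L|K}(1-t\al)=1-t^ph$.
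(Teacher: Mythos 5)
Your route to $\ns\sub\cH$ (bound $w\lb\frac{\si(b)}{b}-1\rb$ below by $v(\z)-\de/p$ via divided differences and the approximation type of $\al$) is genuinely different from the paper's, which instead builds from each $b$ a new Kummer generator $x_b$ (via $\ga_b=b^{p-1}/g'(b)$ and the partial trace) with $\lb\frac{\si(b)}{b}-1\rb\sub\lb\frac{\si(x_b-1)}{x_b-1}-1\rb$, whose norm is literally a generator of $\cH$. But your defectless analysis is wrong as written. Having $e_{L|K}=p$ does not make the $w(\al^i)$ pairwise incongruent modulo $\Ga_K$, and $f_{L|K}=p$ does not make $\ol\al$ generate $l|k$: both fail exactly when every best $h$ is a $1$-unit, i.e.\ in cases (iii) and (v) of \Cref{cdvr classifn} and \Cref{best}. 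For example $K=\Q_p(\ze)$, $h=1+p$ gives a wild extension with $w(\al)=0$ (every Kummer generator has value in $\Ga_K$ since $v(h)=0$), $\de\geq v(p)>0$, and $b=\al-1$ has $w\lb\frac{\si(b)}{b}-1\rb=v(\z)-w(\al-1)<v(\z)$; so your claimed estimate ``$\geq v(\z)$'' and your claim ``$\de=0$'' are both false there. Repairing this is precisely the content the paper imports from \Cref{js} and \Cref{best}: one must first produce a best $h$ and run the no-cancellation argument with the element $\mu$ of \Cref{mu} (a scaled $\al-1$), not with $\al$ itself.

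In the defect case the Taylor/divided-difference estimate is fine, but the two facts carrying all the weight --- that $\operatorname{dist}(\al,K)$ is not attained and is unchanged when $K$ is replaced by the tame extensions $K(\th_i)$ (equivalently, that $LK(\th_i)|K(\th_i)$ is again immediate with the same distance) --- are asserted rather than proved. They are true, but need Ostrowski/multiplicativity of the defect plus a valuation-basis argument in the spirit of \Cref{mu ana}; without them the crucial inequality $w(\al-\th_i)<\operatorname{dist}(\al,K)$ has no justification, and this is exactly the hard core of the theorem, which the paper's construction avoids entirely. Two smaller slips: $t^ph$ does \emph{not} range over all of $\mathfrak{A}$ as $t$ ranges over $K^{\times}$ (only over the coset $h(K^{\times})^p$; for the remaining generators of $\cH$ you must rerun the computation with $\al^j$, $\si(\al^j)=\ze^j\al^j$, for $1\leq j\leq p-1$), and the identification $\de/p=\operatorname{dist}(\al,K)$ likewise requires handling all cosets $c^ph^j$, not just $j=1$. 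So: correct target inequality and an interesting alternative strategy, but with a false step in the defectless case and the key structural lemmas of the defect case left unproved.
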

\begin{thm}\label{comm dia} For $L|K$  as in 1.1, we
consider the $A$-module $\omk$ of logarithmic differential
$1$-forms and the $B$-module $\oml$ of relative logarithmic differential
$1$-forms. Then
\begin{enumerate}[(i)]
\item There exists a unique homomorphism of $A$-modules 
$\dis \rsw : \cH/\cH^2 \to \omk/(\is \cap A)\omk$ such that for all $ h \in
\mathfrak{A}$, $\dis
\frac{\z^p}{h-1} \mapsto \frac{1}{h-1} \dl h$. 
\item There is a $B$- module isomorphism $\dis \varphi_\si:
\oml/\js\oml \overset{\cong}{\to }\js/\js^2$ such that for all $ x \in L^{\times},  \dis \dl x \mapsto \frac{\si(x)}{x}-1.$ 

\item Furthermore, these maps induce the following commutative
diagram:

\begin{center}$
\begin{tikzcd}[column sep=large]
\oml/\js \oml \arrow{r}{\varphi_\si}[swap]{\cong}
\arrow{d}[swap]{\dn}
&\js/\js^2  \arrow[hook]{d}{\overline{N}}\\
\omk/(\is \cap A )\omk   &\cH/\cH^2 \arrow{l}{\rsw}
\end{tikzcd}$
\end{center}

\noindent The maps $\dn, \overline{N}$ are induced by the norm
map $N$.
\end{enumerate}
\end{thm}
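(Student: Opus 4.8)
The plan is to establish (ii) first, to deduce (i) from it together with \Cref{hn}, and then to verify the diagram in (iii) by a short computation on generators. The real difficulty will lie in the structural analysis of the ideals $\js$, $\is\cap A$ and $\cH$ in the possibly-defect, higher-rank setting — carried out in the preceding sections — rather than in any of the formal verifications below.

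\textbf{Part (ii).} I would obtain $\varphi_\si$ from the universal property of the module of relative logarithmic differentials. The key point is that, modulo $\js^2$, the assignment $b\mapsto\si(b)-b$ is an $A$-derivation $B\to\js/\js^2$ and the assignment $x\mapsto\frac{\si(x)}{x}-1$ is a group homomorphism $L^\times\to\js/\js^2$: in each case the failure of (bi)additivity is a product of two elements of $\js$, for instance $(\si(b)-b)(\si(b')-b')\in\is^2\subseteq\js^2$, hence vanishes. These two maps are compatible with the relation $db=b\,\dl b$ and annihilate the image of $K^\times$, so together they induce a $B$-linear map $\oml\to\js/\js^2$; since every $\frac{\si(x)}{x}-1$ lies in $\js$, this map kills $\js\oml$ and descends to $\varphi_\si:\oml/\js\oml\to\js/\js^2$, which is clearly surjective and has the stated formula. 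For injectivity I would build the inverse by sending the distinguished generator $\frac{\si(b)}{b}-1$ of $\js$ to $\dl b\bmod\js\oml$; since $\js^2$ is then automatically carried into $\js\oml$, it remains only to show that every $B$-linear relation among the generators $\frac{\si(b)}{b}-1$ maps into $\js\oml$. This is the first place genuine work is needed, and I would handle it using the explicit description of $\js$ from the earlier sections (its comparison with $\is$ and with the elements $\z$ and $h-1$).

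\textbf{Part (i).} I would define $\rsw$ on the generators of $\cH$ by $\frac{\z^p}{h-1}\mapsto\frac{1}{h-1}\,\dl h\bmod(\is\cap A)\omk$ and then prove independence of choices. If $h,h'\in\mathfrak{A}$ give the same class in $\cH/\cH^2$, then by Kummer theory $h'\equiv h^{i}\bmod(K^\times)^p$ for some $i$ prime to $p$; replacing $\al$ by $\al^{i}$ reduces to $h'=h\,a^p$, and then $\frac{1}{h'-1}\dl h'-\frac{1}{h-1}\dl h=\frac{h(1-a^p)}{(h'-1)(h-1)}\,\dl h+\frac{p}{h'-1}\,\dl a$; the task is to show this lies in $(\is\cap A)\omk$, which comes down to comparing $v(p),\,v(1-a^p),\,v(h-1),\,v(h'-1)$ with $v(\is\cap A)$, using the precision estimates behind \Cref{H} and the proof of \Cref{hn}. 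Controlling $\is\cap A$ in the defect case is the crux. An alternative, once (ii) is in hand: the norm induces the vertical maps $\overline N$ and $\dn$ of the diagram, and one may describe $\rsw$ on the image of $\overline N$ via $\dn\circ\varphi_\si^{-1}$ and check — using \Cref{hn} — that this already accounts for all of $\cH/\cH^2$; then only the defining formula needs checking, for which $N(\z)=\z^p$ and $N(\al-1)=\pm(h-1)$ are convenient.

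\textbf{Part (iii).} With all four maps available, it suffices to check $\dn=\rsw\circ\overline N\circ\varphi_\si$ on the classes $\dl x$, $x\in L^\times$, which generate $\oml/\js\oml$ over $B$. Writing $x=\al^{j}c$ with $c\in K^\times$ and using that $\dl$ is a homomorphism killing $K^\times$, one reduces to $x\in K^\times$ (all maps vanish) and $x=\al$, the remaining generators being treated the same way with help from \Cref{hn}. For $x=\al$: $\varphi_\si(\dl\al)=\frac{\si\al}{\al}-1=\z$, the composite is $\rsw\big(\overline N(\z)\big)$, which one identifies with $\dl h$ using $N(\z)=\z^p\in\cH$, $\z^p=(h-1)\cdot\frac{\z^p}{h-1}$ and $A$-linearity of $\rsw$; and $\dn(\dl\al)=\dl N(\al)=\dl h$, since $N(\al)$ equals $h$ up to a root of unity — namely $1$ for odd $p$ and $-1\in\Z$ for $p=2$, and $\dl$ is trivial on $\Z$ in $\omk$. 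For $x=\al^{j}$ one additionally uses $\frac{\si\al^j}{\al^j}-1=\ze^{j}-1\equiv j\z\bmod\js^2$ (as $w_j-j\in\z A$, where $w_j=1+\ze+\dots+\ze^{j-1}$) and that the resulting error, an $A$-multiple of $\z^{p-1}\dl h$, vanishes modulo $(\is\cap A)\omk$ because $\z^{p-1}\in\is\cap A$. Hence the diagram chase is short; as noted, the substantive work is the ideal-theoretic analysis that keeps the modules above small enough for all these maps to be well defined and the square to commute.
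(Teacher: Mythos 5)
Your skeleton (establish $\varphi_\si$ from the universal property, define $\rsw$ on generators and check independence via the valuation of $p(a^p-1)/(h-1)$, then chase the square on generators) is the right shape and matches the paper in parts (i)--(ii). But part (iii) contains a genuine gap. The classes $\dl x$, $x\in L^\times$, do generate $\oml/\js\oml$ over $B$, but your reduction ``write $x=\al^j c$ with $c\in K^\times$'' is false: almost no element of $L^\times$ has this form (e.g.\ $\al-1$), and the clause ``the remaining generators being treated the same way with help from \Cref{hn}'' is precisely where all the content lives. For a general $b\in B^\times\setminus A$ one must show $\rsw\bigl(N(\tfrac{\si(b)}{b}-1)\bigr)=\dl N(b)$ as an \emph{equality} in $\omk/(\is\cap A)\omk$, whereas \Cref{hn} only gives a containment of ideals; this is not ``the same way'' as the $x=\al$ computation. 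The paper avoids the problem structurally: in the defectless case \Cref{mu} shows $\oml$ is generated by the \emph{single} element $\dl\mu$ with $\mu=\al$ or $\mu=\al-1$ according to the shape of best $h$ in \Cref{best}, so the square need only be checked on $b\,\dl\mu$; and the correct generator in the ferocious cases is $\dl(\al-1)$, for which $\varphi_\si(\dl(\al-1))=\tfrac{\z\al}{\al-1}$, $N(\tfrac{\z\al}{\al-1})=\tfrac{\z^p h}{h-1}$ and $\rsw(\tfrac{\z^ph}{h-1})=\dl(h-1)=\dl N(\al-1)$ close the diagram. Your check on $\dl\al$ alone does not suffice there because $\dl\al$ does not generate $\oml$ when $h$ is a unit.

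The second gap is the defect case, which you flag (``controlling $\is\cap A$ in the defect case is the crux'') but do not address. There, best $h$ does not exist, $\cH$ and $\js$ are not principal (\Cref{js}), and neither your definition of $\rsw$ via a single $h$ nor a single-generator check of the square is available. The paper's route is to invoke \Cref{fil} to write $B=\cup_\al A[\al']$ as a filtered union, identify $\Om^1_{B|A}=\varinjlim\Om^1_{A[\al']|A}$ via \Cref{alpha level} and \Cref{diff}, define $\rsw$ with target $\omk/\I\omk$ where $\I=\cup_\al\I_\al$, and verify well-definedness and commutativity at each $\al$-level. Without this (or an equivalent device), the uniqueness/existence claim in (i) and the commutativity in (iii) are not established for defect extensions, which is the main point of the theorem.
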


The map $\rsw$ in (i) is a refined generalization of the refined
Swan conductor of Kato for complete discrete valuation rings
\cite{K2}.
\begin{rem} If $L|K$ is unramified ($e_{L|K}=1, l|k$ separable of degree $p$), then we have $i(\si)=j(\si)=0$,  $\is=\js=B$ and $\cH=A$. Consequently, our main results are trivially true. From now on, we assume that $L|K$ is either wild ($e_{L|K}=p, l|k$ trivial ), ferocious ($l|k$ purely inseparable of degree $p$) or with defect.
\end{rem}

\subsection{Outline of the Contents} We begin, in section 2, with a preliminary discussion of K{\"a}hler differentials, defect and classical invariants of ramification theory. Section 3 contains the description of Swan conductor in the defectless case and some results that connect defect with the ideal $\js$.\\ We prove \Cref{hn} in section 4. In the next section, we use it to prove \Cref{fil}. This allows us, in the defect case, to express the ring $B$ as a filtered union of rings of the form $A[x]|A$, where the elements $x \in L^{\times}$ are chosen in a particular way.\\ The generalized and refined definition of the refined Swan conductor $\rsw$ is presented in section 6. First we define it in the defectless case and then extend the definition to  defect extensions. We also prove \Cref{comm dia}, first for  defectless extensions and then for defect extensions, using \Cref{fil}.
Results that can be proved in a manner similar to the Artin-Schreier case are presented without proofs.\\
In the seventh section, we extend the main results to the non-Kummer case.\\ The last section consists of some remarks about how the results of \cite{V} can be generalized to Artin-Schreier defect extensions of higher rank valuations, in a similar fashion.

\section{Preliminaries}
\subsection{Definitions}
\begin{defn} \textbf{Differential $1$-Forms}

\begin{enumerate}[(i)]
\item Let $R$ be a commutative ring. The $R$-module $\Om_R^1$ of
\tit{differential $1$-forms over $R$} is defined as follows:
$\Om_R^1$ is generated by
\begin{itemize}
\item The set $\{ db \mid b \in R \}$ of generators.
\item The relations are the usual rules of differentiation:
For all $b, c \in R$,
\begin{enumerate}
\item (Additivity) $d(b+c)=db+dc$
\item (Leibniz rule) $d(bc)=cdb+bdc$
\end{enumerate}
\end{itemize}
\item For a commutative ring $A$ and a commutative $A$-algebra
$B$, the $B$-module $\OL$ of \tit{relative differential
$1$-forms over $A$} is defined to be the cokernel of the map $B
\ten_A \Om^1_A \to \Om^1_B.$
\end{enumerate}
\end{defn}

\begin{defn} \textbf{Logarithmic Differential $1$-Forms}
\begin{enumerate}[(i)]
\item For a valuation ring $A$ with the field of fractions $K$,
we define the \textit{$A$-module $\omk$ of logarithmic
differential $1$-forms} as follows: $\omk$ is generated by
\begin{itemize}
\item The set $\{ db \mid b \in A \} \cup \{ \dl x \mid x
\in K^{\times}\}$ of generators.
\item The relations are the usual rules of differentiation and
an additional rule: For all $b,c \in A$ and for all $x,y \in
K^{\times},$
\begin{enumerate}
\item (Additivity) $d(b+c)=db+dc$
\item (Leibniz rule) $d(bc)=cdb+bdc$
\item (Log 1) $\dl(xy)=\dl x+\dl y$
\item (Log 2) $b \dl b = db$ for all $0 \neq b \in A$
\end{enumerate}
\end{itemize}
\item Let $L|K$ be an extension of henselian valued fields, $B$
the integral closure of $A$ in $L$ and hence, a valuation ring.
We define the \textit{$B$-module $\oml$ of logarithmic relative
differential $1$-forms over $A$} to be the cokernel of the map
$B \ten_A \om^1_A \to \om^1_B.$

\end{enumerate}
\end{defn}

\begin{defn}\textbf{Defect}

 Let $K$ be a henselian valued field of mixed characteristic $(0,p)$ and $L|K$ a non-trivial Galois extension of degree $p>0$. Let  $e_{L|K}:=(w(L^{\times}):w(K^{\times}))$ denote the ramification index and   $f_{L|K}:= [l:k]$  the inertia degree of $L|K$. Then $p=d_{L|K}e_{L|K}f_{L|K}$, where $d_{L|K}$ is a positive integer, called the \tit{defect} of the extension. Since $p$ is a prime, $d_{L|K}$ is either $1$ or $p$.
\end{defn}

For a more general discussion on defect, see \cite{F}.
\subsection{Classical Invariants}
Let $K$ be a complete discrete valued field of residue
characteristic $p>0$ with normalized  valuation $v$,
valuation ring $A$ and perfect residue field $k$. Consider
$L|K$, a finite Galois extension of $K$. Let $e_{L|K}$ be
the ramification index of $L|K$ and $G= \Gal (L|K)$. Let $w$ be
the valuation on $L$ that extends $v$, $B$ the integral
closure of $A$ in $L$ and $l$ the residue field of $L$. In this
case, we have the following invariants of ramification theory:
\begin{itemize}
\item The Lefschetz number $i(\si)$ and the logarithmic Lefschetz
number $j(\si)$ for $\si \in G\backslash \{1 \}$ are defined as
$$
i(\si)=\min\{v_L(\si(a)-a)\mid a \in B \}
$$$$
j(\si)=\min \left\{v_L\lb \frac{\si(a)}{a} -1 \rb\mid a \in L^{\times} \right\}
$$
Both the numbers are non-negative integers.
\item For a finite dimensional representation $\rho$ of $G$ over
a field of characteristic zero, the Artin conductor $\Art(\rho)$
and the Swan conductor $\Sw(\rho)$ are defined as
$$
\Art(\rho)= \frac{1}{e_{L|K}} \sum_{\si \in G\backslash \{1 \}}
i(\si)(\dim(\rho)-\tr(\rho(\si)))
$$$$
\Sw(\rho)= \frac{1}{e_{L|K}} \sum_{\si \in G\backslash \{1 \}}
j(\si)(\dim(\rho)-\tr(\rho(\si)))
$$
Both these conductors are non-negative integers. This is a consequence of the
Hasse-Arf Theorem (see \cite{S} chapters 4, 6).
\end{itemize}

\noindent The invariants $j(\si)$ and $\Sw(\rho)$ are the parts of
$i(\si)$ and $\Art(\rho)$, respectively, which handle the wild
ramification. We wish to generalize these concepts to arbitrary valuation rings. Let us begin with the case of discrete valuation rings,
possibly with imperfect residue fields.

\section{Swan Conductor, Best $h$ and Defect}
\subsection{Complete Discrete Valuation Case} The following lemma classifies Kummer extensions of complete discrete valued fields.
\begin{lm}\label{cdvr classifn}(See \cite{OH}, \cite{XZ}.)\\Let $L|K$ be an extension of complete discrete valued fields, $\pi$ a prime element of $K$. We use the notation of 1.1, $L=K(\al)$ is given by $\al^p=h$. We can choose $h$ with either $v(h)=1$ or $v(h)=0$ such that $t=v(h-1)$ is maximal. Then we have the following cases: $ u, c \in A^{\times}, \ol{u} \nin k^p, e'=\frac{v(p)}{p-1}=v(\z).$
\begin{enumerate}[(i)]
\item $h=1+c \z^p; \ol{c} \nin \{x^p-x \mid x \in k \}$.
\item $h=c\pi$.
\item $h=1+c\pi^t; 0 <t<e'p, (t,p)=1$.
\item $h=u$.
\item $h=1+u\pi^{t}; 0<t<e'p, p \mid t$. 
\end{enumerate}
\end{lm}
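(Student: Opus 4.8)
The plan is to normalize the defining element $h$ and then read the five cases off from $v(h)$ and the largest attainable value of $t=v(h-1)$. I would first record the two legitimate modifications of $h$ that do not change $L=K(\al)$: replacing $h$ by $ha^p$ with $a\in K^\times$, and replacing $h$ by $h^n$ with $(n,p)=1$ (valid because $\al^n$ still has degree $p$ over $K$, so $K(\al^n)=L$ and $h^n\in\mathfrak{A}$). Using these one arranges $v(h)\in\{0,1\}$: divide by a power of $\pi^p$ if $p\mid v(h)$, otherwise first pass to $h^n$ with $n\,v(h)\equiv 1\pmod p$ and then divide by a power of $\pi^p$. If $v(h)=1$ then $h=c\pi$ with $c\in A^\times$ and automatically $v(h-1)=0$; this is case (ii). So assume $v(h)=0$, and among all such representatives choose one with $t=v(h-1)$ maximal (the maximum is attained since $t$ is a non-negative integer, bounded above by Step 2 below; I also use that $e'=v(\z)=v(p)/(p-1)$ is a positive integer, which holds because $\Q_p(\ze)\subseteq K$ forces $(p-1)\mid v(p)$).

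Step 2 is the a priori bound $t\le pe'$. If $v(h-1)>pe'$ I claim $h\in(K^\times)^p$, contradicting the non-triviality of $L|K$. Indeed, solve $(1+y)^p=h$ by successive approximation: to kill a term of valuation $m>pe'$ take $v(y)=m-v(p)$, which exceeds $e'$ since $m>pe'$ and $v(p)=(p-1)e'$; then in the binomial expansion the remainder $(1+y)^p-1-py$ has valuation $>m$, because $v(y^p)=p\,v(y)>m$ and $v(\binom{p}{j}y^j)=v(p)+j\,v(y)>m$ for $2\le j\le p-1$. Completeness of $K$ produces the limit $y$, a contradiction; hence $t\le pe'$.

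Step 3 is the case analysis on the maximal $t\in\{0,1,\dots,pe'\}$, in each case using maximality of $t$ to force the stated congruence condition. If $t=0$ then $h=u\in A^\times$ with $\ol u\ne 1$; were $\ol u=\ol a^p$, then $ua^{-p}\equiv 1\pmod\m$ would have $v(\,\cdot-1\,)>0$, so $\ol u\notin k^p$: case (iv). If $0<t<pe'$ with $(t,p)=1$, then $c:=(h-1)\pi^{-t}\in A^\times$ and nothing further is needed: case (iii). If $0<t<pe'$ with $t=pt_0$, then $t_0<e'$ forces $(1+a\pi^{t_0})^p\equiv 1+a^p\pi^{pt_0}\pmod{\m^{pt_0+1}}$ (the terms $pa\pi^{t_0}$ and $\binom{p}{j}a^j\pi^{jt_0}$ all have valuation $>pt_0$), so if $\ol c=\ol a^p$ then $h(1+a\pi^{t_0})^{-p}$ has $v(\,\cdot-1\,)>t$; hence $\ol c\notin k^p$: case (v). If $t=pe'$, then writing $c=(h-1)\z^{-p}\in A^\times$ and using $v(p)+e'=pe'$ together with the cyclotomic identity $p/\z^{p-1}\equiv -1\pmod\z$ (from $p=\prod_{j=1}^{p-1}(1-\ze^j)$ and Wilson's theorem), one gets $(1+a\z)^p\equiv 1+(a^p-a)\z^p\pmod{\m^{pe'+1}}$; so if $\ol c=\ol a^p-\ol a$ then $h(1+a\z)^{-p}$ has $v(\,\cdot-1\,)>pe'$, contradicting Step 2. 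Hence $\ol c\notin\{x^p-x\mid x\in k\}$: case (i).

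The main obstacle is the last case $t=pe'$: one must run the binomial expansion of $(1+a\z)^p$ modulo $\m^{pe'+1}$ so that precisely the two terms $pa\z$ and $a^p\z^p$ survive, and then identify $p\z/\z^p=p/\z^{p-1}$ with $-1$ in $k$, so that the resulting polynomial is the Artin--Schreier polynomial $a^p-a$ rather than $a^p+\lambda a$ for some other constant; this is exactly what makes condition (i) take the stated form. The small primes (notably $p=2$, where $\z=-2$ and $p/\z^{p-1}=-1$ on the nose) should be checked directly but present no difficulty. Everything else reduces to the two normalization moves on $h$ and routine estimates of valuations of binomial coefficients.
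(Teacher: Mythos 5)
Your argument is correct, but note that the paper does not prove this lemma at all: it is stated with a pointer to Hyodo \cite{OH} and Xiao--Zhukov \cite{XZ}, so there is no internal proof to compare against. What you have written is a complete, self-contained version of the standard argument, and the key points are all in place: the two admissible moves on $h$ (multiplying by $p$-th powers and passing to $h^n$ with $(n,p)=1$, the latter justified because $[h]^n$ still generates the same cyclic subgroup of $K^{\times}/(K^{\times})^p$, so $h^n\in\mathfrak{A}$) to reach $v(h)\in\{0,1\}$; the bound $t\le pe'$ via the successive-approximation proof that units in $1+\m^{pe'+1}$ are $p$-th powers, which also guarantees the maximum of $t$ is attained; and the three residue computations, where maximality of $t$ (resp.\ the bound $t\le pe'$ in the top case) forces $\ol{u}\notin k^p$ in cases (iv)--(v) and $\ol{c}\notin\{x^p-x\mid x\in k\}$ in case (i). Your identification $p/\z^{p-1}\equiv -1 \pmod{\m}$ via $p=\prod_{j=1}^{p-1}(1-\ze^j)$ and Wilson's theorem is exactly what makes the Artin--Schreier condition in (i) come out with the correct sign, and the valuation estimates ($t_0<e'$ in case (v), $(j-1)(m-v(p))>0$ in Step 2) are all right. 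One small point you could make explicit, though it creates no gap: the dichotomy between case (ii) and the unit cases is clean because $v$ modulo $p$ on $\mathfrak{A}$ is determined up to prime-to-$p$ multiples, so an extension never admits both a representative with $v(h)=1$ and one with $v(h)=0$, and the maximization of $t$ is therefore carried out within a single one of the two normalizations.
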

In the case (i), $L|K$ is unramified. In (ii) and (iii), it is wild  and in the last two cases, it is ferocious. We compute $j(\si)$ in each case. 
\begin{enumerate}[(i)]
\item $i (\si)=j(\si)= w\lb (\si-1)\lb\frac{\z}{\al-1}\rb\rb=0$.
\item $j(\si)=w\lb\frac{\si(\al)}{\al}-1\rb=w(\z)=e'p$.
\item $j(\si)=w\lb\frac{\si(\al-1)}{\al-1}-1\rb=w(\z)-w(\al-1)=e'p-t$.
\item $j(\si)=w\lb\frac{\si(\al)}{\al}-1\rb=w(\z)=e'$.
\item $j(\si)=w\lb\frac{\si(\al-1)}{\al-1}-1\rb=w(\z)-w(\al-1)=e'- t/p$.
\end{enumerate}

\subsection{Best $h$ and Swan Conductor: Classical Case and General Case}
\begin{defn} Let $L|K$ be as in \Cref{cdvr classifn}. We do not require $k$ to be perfect. We define the Swan conductor of this extension by 
\begin{equation} \dis \Sw(L|K):= \min_{h \in \mathfrak{A}} v\lb\frac{\z^p}{h-1}\rb
\end{equation}
This definition coincides with the classical definition of $\Sw(L|K)$ when $k$ is perfect.\\Any element $h$ of $\mathfrak{A}$ that achieves this minimum value is called \tit{best $h$}.\\It is well-defined upto multiplication by $a^p; a \in K^{\times}$.
\end{defn}
\begin{rem} \Cref{cdvr classifn} explicitly describes best $h$. $\Sw(L|K)$ is $0$ in (i), $e'p$ in (ii), (iv) and $e'p-t$ in (iii), (v).
\end{rem}
\noindent We generalize the definition of best $h$ to arbitrary extensions as in 1.1.
\begin{defn} Let $L|K$ be as in 1.1. An element $h$ of $\mathfrak{A}$ is called \tit{best} if \begin{equation}\dis  v\lb\frac{\z^p}{h-1}\rb=\inf_{g \in \mathfrak{A}} v\lb\frac{\z^p}{g-1}\rb\end{equation} If $h$ is best, $\cH$ is the principal ideal generated by $\lb\frac{\z^p}{h-1}\rb$ and plays the role of $\Sw(L|K)$ in the generalization. We cannot, however, guarantee the existence of best $h$ in general.
\end{defn} 

\subsection{Defect, $\js$ and Best $h$}

\begin{lm}\label{mu} Let $L|K$ be as in 1.1, except that we don't require $ \ze \in K$. Assume further that $L|K$ is either wild or ferocious. Then 
\begin{enumerate} 
\item There exists $\mu \in L^{\times}$ such that either $w(L^{\times})/w(K^{\times})$ is of order $p$ and generated by $w(\mu)$ or $l|k$ is purely inseparable of degree $p$ and generated by the residue class $\ol{\mu}$ of $\mu$.  
\item Let $\mu$ be as above, $x_i \in K$ for $0 \leq i \leq p-1$. Then $\dis \sum_{i=0}^{p-1}x_i\mu^i \in B$ if and only if $ x_i\mu^i \in B$ for all $i$.
\item $\dl \mu$ generates the $B$-module $\oml$.
\end{enumerate}
\end{lm}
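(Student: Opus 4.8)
The plan is to derive all three assertions from the structure of a degree-$p$ extension with trivial defect, in order. \emph{Part (1).} By hypothesis $L|K$ is wild, i.e.\ $e_{L|K}=p$ (so $f_{L|K}=d_{L|K}=1$), or ferocious, i.e.\ $l|k$ is purely inseparable of degree $p$ (so $f_{L|K}=p$, $e_{L|K}=d_{L|K}=1$). In the wild case $w(L^\times)/w(K^\times)$ has prime order $p$, hence is cyclic, and any $\mu\in L^\times$ with $w(\mu)\notin w(K^\times)$ has $w(\mu)$ generating it. In the ferocious case $l=k(\ol\mu)$ for every $\ol\mu\in l\setminus k$, since the minimal polynomial of such an element has degree dividing $p$ and exceeding $1$; lifting a nonzero such $\ol\mu$ to $\mu\in B$ gives $\mu\in B^\times$ whose residue class generates $l|k$. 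I will also record that $L=K(\mu)$: in either case $\mu$ forces $e_{K(\mu)|K}=p$, resp.\ $f_{K(\mu)|K}=p$, so $[K(\mu):K]\ge p$, whence $K(\mu)=L$ and $\{1,\mu,\dots,\mu^{p-1}\}$ is a $K$-basis of $L$.

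\emph{Part (2).} The ``if'' direction is immediate. For ``only if'' in the wild case, the values $w(\mu^i)=iw(\mu)$, $0\le i\le p-1$, lie in $p$ distinct cosets of $w(K^\times)$ because $w(\mu)$ has order $p$ in the quotient; hence the nonzero terms $x_i\mu^i$ have pairwise distinct valuations, so $w\bigl(\sum_i x_i\mu^i\bigr)=\min_i w(x_i\mu^i)$ by the ultrametric inequality, and $\sum_i x_i\mu^i\in B$ forces every $x_i\mu^i\in B$. In the ferocious case take $\mu\in B^\times$; if $\sum_i x_i\mu^i\in B$ but some $x_j\mu^j\notin B$, set $m=\min\{w(x_i):x_i\ne 0\}<0$ and pick $c\in K^\times$ with $w(c)=m$. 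Then $y_i:=x_i/c\in A$, with $y_i\in A^\times$ for the index attaining the minimum, while $\sum_i y_i\mu^i=c^{-1}\sum_i x_i\mu^i$ has strictly positive valuation, so lies in $\mathfrak{m}_L$; reducing modulo $\mathfrak{m}_L$ yields a nontrivial $k$-linear relation among $1,\ol\mu,\dots,\ol\mu^{p-1}$, contradicting that these form a $k$-basis of $l$ (as $\ol\mu$ is purely inseparable of degree $p$ over $k$).

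\emph{Part (3).} By Part (1), $L=K(\mu)$, so every $b\in B$ lies in $\bigoplus_{i=0}^{p-1}K\mu^i$ and, by Part (2), can be written $b=\sum_{i=0}^{p-1}z_i$ with $z_i=y_i\mu^i\in B$, $y_i\in K$. The $B$-module $\oml$ is generated by the images of $\{db:b\in B\}\cup\{\dl x:x\in L^\times\}$, so it suffices to show each of these lies in $B\cdot\ol{\dl\mu}$. By additivity $db=\sum_i dz_i$, and for $z_i\ne 0$, Log 2 and Log 1 give $dz_i=z_i\,\dl z_i=z_i\,\dl y_i+i\,z_i\,\dl\mu$; since $y_i\in K^\times$ the term $z_i\,\dl y_i$ lies in the image of $B\ten_A\omk\to\om^1_B$, hence vanishes in $\oml$, so $db\equiv\bigl(\sum_i i\,z_i\bigr)\dl\mu$ in $\oml$ with $\sum_i i\,z_i\in B$. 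For $x\in L^\times$, Part (1) lets us write $x=a\mu^j u$ with $a\in K^\times$, $0\le j\le p-1$, $u\in B^\times$; then $\dl x=\dl a+j\,\dl\mu+\dl u$, where $\dl a$ vanishes in $\oml$ and, by Log 2, $\dl u=u^{-1}du$, which is a $B$-multiple of $\ol{\dl\mu}$ by the computation above applied to $b=u$ (the coefficient being a $B$-element since $u\in B^\times$). Hence $\dl\mu$ generates $\oml$.

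I expect the one genuinely delicate point to be the bookkeeping in Part (3): the term $z_i=y_i\mu^i$ must be differentiated even though $y_i$ need not lie in $B$, which is why one passes through $\dl z_i=\dl y_i+i\,\dl\mu$ (legitimate because $\dl y_i$ is a logarithmic generator over $K$ and $z_i\in B$) rather than through a meaningless ``$dy_i$''; and one must check that the resulting coefficients $\sum_i i\,z_i$ and $u^{-1}du$ genuinely lie in $B$ and not merely in $L$. Parts (1) and (2) are then routine consequences of the defectless structure theory, the ultrametric inequality, and the linear independence of $1,\ol\mu,\dots,\ol\mu^{p-1}$ over $k$.
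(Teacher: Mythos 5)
Your proof is correct. The paper itself gives no argument here, only a citation to Lemmas 1.11--1.13 of \cite{V}, and your write-up is essentially the standard proof those lemmas carry out: in the wild case the terms $x_i\mu^i$ have values in distinct cosets of $w(K^{\times})$, in the ferocious case one rescales and uses the $k$-linear independence of $1,\ol{\mu},\dots,\ol{\mu}^{p-1}$, and then every generator $db$ ($b\in B$) and $\dl x$ ($x\in L^{\times}$) of $\om^1_B$ is reduced, via Log 1 and Log 2 and the decomposition from part (2), to a $B$-multiple of $\dl\mu$ modulo the image of $B\ten_A\omk$ --- so your argument correctly supplies the details the paper leaves to the reference.
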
 \begin{proof} See Lemma 1.11, Lemma 1.12, Lemma 1.13 of \cite{V}.  \end{proof}

\begin{pr}\label{js} Let $L|K$ be as in 1.1, except that we don't require $ \ze \in K$. Then $\js$ is a principal ideal of $B$ if and only if $L|K$ is defectless.\end{pr}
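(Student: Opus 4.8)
The plan is to prove the two implications separately, using Lemma~\ref{mu} to reduce everything to explicit computations with the element $\mu$. For the direction ``defectless $\Rightarrow$ $\js$ principal,'' I would first observe that by Lemma~\ref{mu}(1), in the defectless case the element $\mu$ either generates $w(L^\times)/w(K^\times)$ or generates $l|k$ as a purely inseparable extension. I would then claim that $\js = \bigl(\tfrac{\si(\mu)}{\mu}-1\bigr)$. The inclusion $\supseteq$ is obvious; for $\subseteq$, take any $b \in L^\times$ and write $b = u \mu^i$ with $u$ a unit times an element pulled back from $K$ (using that $\mu$ controls the value group/residue extension), so that $\si(b)/b = (\si(u)/u)(\si(\mu)/\mu)^i$. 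A short manipulation expresses $\si(b)/b - 1$ in terms of $\si(\mu)/\mu - 1$ and $\si(u)/u - 1$; the point is that $w(\si(u)/u - 1)$ is at least $w(\si(\mu)/\mu - 1)$ when $u$ ranges over the relevant units, because the worst-case behaviour is achieved by $\mu$ itself. This requires care in separating the wild case ($e=p$) from the ferocious case ($f=p$), but in both situations the $j$-invariant computations of Section~3.1 (and their analogues) show $w(\si(\mu)/\mu-1) = j(\si)$ is the minimum, so $\js$ is generated by that single element.

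For the converse, ``$\js$ principal $\Rightarrow$ defectless,'' I would argue the contrapositive: suppose $L|K$ has defect, so $d_{L|K} = p$, $e_{L|K} = f_{L|K} = 1$, and $w(L^\times) = w(K^\times)$, $l = k$. The key feature of the defect case is that for every $b \in L^\times$ there exists $c \in L^\times$ with $w\bigl(\tfrac{\si(c)}{c}-1\bigr) > w\bigl(\tfrac{\si(b)}{b}-1\bigr)$: one produces better and better approximations, so the set $\{ w(\tfrac{\si(b)}{b}-1) : b \in L^\times \}$ has no minimum (it is cofinal in a subset of $\Ga_L$ with no least element, or accumulates at some value not attained). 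Concretely, if $b \in L^\times$ and $\si(b)/b - 1$ has value $\gamma$, then since $e=f=1$ one can find $a \in K^\times$ with $w(b/a)$ as small as desired and $\overline{b/a} \in k$, replace $b$ by $b(1 + \text{correction})$ to strictly increase the value; this is precisely the phenomenon that makes the defect case delicate. Hence $\js$ cannot be principal, since a principal ideal $(\xi)$ of the valuation ring $B$ consists exactly of elements of value $\geq w(\xi)$, and its nonzero elements of minimal value would realize the infimum.

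The main obstacle will be making the converse rigorous: one must show that in the defect case the values $w(\si(b)/b - 1)$ genuinely fail to have a minimum, rather than merely failing to be realized by ``obvious'' elements. This is where one needs a genuine input about defect extensions — essentially the statement that a defect extension $L|K$ of degree $p$ is a limit of ``trivial-looking'' sub-behaviour, so that the Kummer generator can always be improved. I expect this to follow from the structure theory referenced via \cite{V} and \cite{F}, or from a direct ultrametric approximation argument using $\al^p = h$ and repeatedly absorbing $p$-th powers $a^p$ into $h$; the forward direction, by contrast, is mostly bookkeeping built on Lemma~\ref{mu}. I would also double-check the edge behaviour in the ferocious case, where $\js$ being principal should still force $d_{L|K}=1$ by the same dichotomy, since Lemma~\ref{mu}(1) already packages wild and ferocious together.
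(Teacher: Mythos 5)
The paper itself disposes of this proposition by citing Proposition 3.10 of \cite{V}, so I am judging your argument on its own terms. Your forward direction is the right idea but contains a circular step as written: you cannot in general factor $b$ as $u\mu^i$ with $u$ ``pulled back from $K$'' (in the ferocious case several terms of the expansion $b=\sum_{i=0}^{p-1}x_i\mu^i$ may share the minimal value), and the claim that $w(\si(u)/u-1)\geq w(\si(\mu)/\mu-1)$ for the relevant units $u$ of $B$ is essentially the statement being proved. The correct reduction is via Lemma \ref{mu}(2): expand $b=\sum_i x_i\mu^i$, write $\si(b)-b=\sum_i x_i\mu^i\bigl((\si(\mu)/\mu)^i-1\bigr)$, factor $\si(\mu)/\mu-1$ out of each term (legitimate since $\si(\mu)/\mu\in B^{\times}$), and use that each $x_i\mu^i/b$ lies in $B$ by Lemma \ref{mu}(2). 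That yields $\js=\bigl(\si(\mu)/\mu-1\bigr)$ without any appeal to a worst-case unit.

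The converse has a genuine error of direction. In a valuation ring, the ideal generated by $\{\si(b)/b-1 \mid b\in L^{\times}\}$ is principal precisely when the minimum of $w(\si(b)/b-1)$ is attained, so to rule out principality you must produce, for each $b$, an element $c$ with $w(\si(c)/c-1)$ \emph{strictly smaller} --- not strictly larger, as you assert twice (``there exists $c$ with $w(\si(c)/c-1)>w(\si(b)/b-1)$'' and ``replace $b$ \dots to strictly increase the value''). Generators of large value are harmless; it is the absence of a smallest one that kills principality. The mechanism is also additive rather than multiplicative: a degree-$p$ defect extension has $e=f=1$ and is therefore immediate, so for $b\in L\setminus K$ the set $\{w(b-a)\mid a\in K\}$ has no maximal element. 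Setting $c=b-a$ leaves $\si(c)-c=\si(b)-b$ unchanged while $w(c)=w(b-a)$ can be pushed strictly above $w(b)$, so $w(\si(c)/c-1)=w(\si(b)-b)-w(b-a)$ strictly decreases and the infimum is never attained. Your ``replace $b$ by $b(1+\mathrm{correction})$'' does not visibly produce this effect. With the inequality reversed and the immediacy (no best approximation in $K$) invoked explicitly, the converse goes through; as stated, it does not.
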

\begin{proof} See the proof of Proposition 3.10 of \cite{V}. \end{proof}

\begin{lm}\label{H} For  $L|K$  as in 1.1, 
$\cH$ is an integral ideal of $A$.
\end{lm}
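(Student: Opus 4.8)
The assertion is that $\cH \subseteq A$, and since $\cH$ is generated as an $A$-submodule of $K$ by the elements $\z^p/(h-1)$ with $h \in \mathfrak{A}$ (note $h \neq 0,1$, as a $p$-th root of $0$ or of $1$ lies in $K$ and cannot generate the nontrivial extension $L$), the plan is to prove $\z^p/(h-1) \in A$, i.e. $v(\z^p) \geq v(h-1)$, for every $h \in \mathfrak{A}$. Recall $v(\z) > 0$ and $v(p) = (p-1)v(\z)$ (from $p = \pm\prod_{i=1}^{p-1}(\ze^i-1)$, each factor having valuation $v(\z)$), so $v(\z^p) = p\,v(\z) > 0$. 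If $v(h-1) \leq v(\z^p)$ there is nothing to prove, so the whole point is to rule out $v(h-1) > v(\z^p)$; I would do this by showing that this inequality forces $h \in (K^{\times})^p$, which contradicts $h \in \mathfrak{A}$.

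So assume $v(h-1) > v(\z^p)$; then automatically $v(h) = 0$ and $\ol h = 1$. I would look for a $p$-th root of $h$ of the shape $\al = 1 + \z\be$ with $\be \in A$. Expanding the binomial gives $(1+\z Y)^p - h = \z^p\, P(Y)$ with $P(Y) = Y^p + \delta_{p-1}Y^{p-1} + \cdots + \delta_1 Y - r$, where $\delta_k := \binom{p}{k}/\z^{p-k}$ and $r := (h-1)/\z^p$. The three observations that make this work are: $\delta_k \in A$ for $1 \le k \le p-1$, since $v\bigl(\binom{p}{k}\bigr) \ge v(p) = (p-1)v(\z) \ge (p-k)v(\z)$; the coefficient $\delta_1 = p/\z^{p-1}$ is a \emph{unit} of $A$, its valuation being $v(p)-(p-1)v(\z)=0$; and $r \in \m$ because $v(h-1) > v(\z^p)$. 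Hence $P$ is a monic polynomial in $A[Y]$ whose reduction $\ol P(Y) = Y\bigl(Y^{p-1} + \ol{\delta_{p-1}}\,Y^{p-2} + \cdots + \ol{\delta_1}\bigr)$ has $Y=0$ as a \emph{simple} root, since $\ol{P}{}'(0) = \ol{\delta_1} \neq 0$. As $A$ is henselian, $P$ has a root $\be \in A$, and then $\al := 1 + \z\be \in A$ satisfies $\al^p = h$, so $h \in (K^{\times})^p$. But then every solution of $X^p = h$ equals $\al\ze^i \in K$, so these solutions generate $K \subsetneq L$, contradicting $h \in \mathfrak{A}$. This gives $v(\z^p) \ge v(h-1)$ for all $h \in \mathfrak{A}$, hence $\cH \subseteq A$; and $\cH$ is an ideal of $A$ by its very definition.

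The one step that needs care — and the reason the statement is not completely immediate — is the reduction to a Hensel argument: applying Hensel's lemma directly to $X^p - h$ fails, since modulo $\m$ this polynomial is $(X-1)^p$, with a multiple root, and its quantitative form would demand a bound like $v(h-1) > 2\,v(p)$, which is strictly stronger than $v(h-1) > v(\z^p) = p\,v(\z)$ once $p \geq 3$. The substitution $X = 1 + \z Y$ is precisely the rescaling that turns the relevant approximate root into a simple root modulo $\m$, the arithmetic input being that $p/\z^{p-1}$ is a unit. Everything else — the binomial expansion, the estimates on $v\bigl(\binom{p}{k}\bigr)$, and the invocation of the henselian property — is routine and does not use discreteness or finite rank of the valuation.
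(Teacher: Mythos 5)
Your proof is correct and follows essentially the same route as the paper: assume $v(h-1)>v(\z^p)$, substitute $\al=1+\z\be$ (equivalently $X=1+\z Y$), use $v(p)=(p-1)v(\z)$ to see the rescaled equation reduces mod $\m$ to $Y^p-\ol{\delta_1}\,Y$ with a simple root at $0$, and invoke henselianness of $A$ to produce a $p$-th root of $h$ in $K$, contradicting $h\in\mathfrak{A}$. The only cosmetic difference is that you apply Hensel's lemma directly to the monic polynomial $P(Y)\in A[Y]$, while the paper expands $(1+\z\be)^p$ for the actual root $\be\in B$ and uses the congruence $p\z^2\mid \z^p+p\z$ to reach the same Artin--Schreier-type reduction before concluding $\be\in K$.
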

\begin{proof} Let $h \in \mathfrak{A}, a \in \m_A$ such that  $h-1=a\z^p$ and set $\be=(\al-1) / \z$. Recall that $p$ divides $\z^p$ and $p\z^2$ divides $\z^p+p\z$. For some $x, x' \in B$, we have  $1+a\z^p=(1+\z \be)^p=1+p\z\be+\z^p\be^p+p\z^2\be^2 x=1+\z^p\be^p-\z^p\be+\z^p\be+ p\z\be+p\z^2\be^2x=1+\z^p(\be^p-\be)+p\z^2\be x'$.  Hence, $a=p\z^{(2-p)}\be x'+\be^p-\be$. Since $K$ is henselian, $\be \in K$ and this contradicts our assumption that $L|K$ is non-trivial.
\end{proof}
\begin{lm}\label{best} Let $L|K$ be as in 1.1. If $L|K$ is defectless, then we can find best $h$  satisfying exactly one of the following properties:
\begin{enumerate}[(i)]
\item $h=1+c \z^p; \ol{c} \nin \{x^p-x \mid x \in k \}$.
\item $h=ct; p \nmid v(t) $.
\item $h=1+ct; 0 <v(t)<e'p, p \nmid v(t) $.
\item $h=u$.
\item $h=1+ut; 0<v(t)<e'p, p\mid v(t)$. 
\end{enumerate}
where $t \in \m_{A}, u, c \in A^{\times}, \ol{u} \nin k^p$.
\end{lm}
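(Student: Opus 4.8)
The plan is to reduce the general (arbitrary-valuation) statement to the complete discrete valuation case, \Cref{cdvr classifn}, by passing to a suitable completion. Let $L|K$ be defectless of degree $p$ with $\ze\in K$. By \Cref{mu} there is an element $\mu\in L^\times$ realizing either $e_{L|K}=p$ via $w(\mu)$ or $f_{L|K}=p$ via $\ol\mu$; in either case the value group $\Ga_K$ together with $\Z w(\mu)$, or the residue field extension $l|k$, is "discrete in the relevant direction." First I would choose a rank-one coarsening (or a convex subgroup) of $v$ adapted to the ramification of $L|K$: let $v'$ be the valuation whose value group is the smallest convex subgroup of $\Ga_L$ containing $w(\z)$ (equivalently $w(p)$). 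Because $L|K$ is defectless of degree $p$ and everything interesting happens "at the level of $p$," the extension $L'|K'$ obtained by completing with respect to $v'$ is again a nontrivial degree-$p$ Kummer extension, now of a complete \emph{discrete} valued field — discreteness comes precisely from defectlessness plus the fact that the jump is controlled by the single element $\z$. Here I would need the standard facts that completion preserves degree for defectless extensions and that $\mathfrak{A}$ and the quantity $v(\z^p/(h-1))$ are insensitive to this completion, since $h-1$ and $\z$ lie in the relevant convex rank-one piece.

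Next I would apply \Cref{cdvr classifn} to $L'|K'$: it gives an element $h'\in\mathfrak{A}(L'|K')$, with $v'(h')\in\{0,1\}$ and $t'=v'(h'-1)$ maximal, falling into one of the five normal forms (i)–(v) there. The content of the present lemma is to transport this $h'$ back to an honest element $h\in\mathfrak A(L|K)\subset K^\times$ with the \emph{same} normal form but now with $t\in\m_A$ a genuine element of the base ring rather than a power of a uniformizer. Concretely: in cases (ii),(iii),(v) of \Cref{cdvr classifn} one has $h'=c'\pi^{t}$ or $h'=1+c'\pi^{t}$ with $c'$ a unit; I replace $\pi^{t}$ by an element $t\in\m_A$ with $v(t)$ equal to the corresponding value (such $t$ exists since the value in question is attained by $h'-1$ or $h'$, which can be taken in $K$), and absorb the discrepancy between $c'$ and its desired shape into the $a^p$-ambiguity, using that $h$ is well-defined up to multiplication by $p$-th powers in $K^\times$. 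The conditions $0<v(t)<e'p$ and the divisibility conditions $p\mid v(t)$ or $p\nmid v(t)$ are exactly the conditions $0<t'<e'p$ and $(t',p)=1$ or $p\mid t'$ from \Cref{cdvr classifn}; the residue conditions $\ol c\notin\{x^p-x\}$ and $\ol u\notin k^p$ transfer verbatim because the residue field and the relevant residues are unchanged by the coarsening. Finally I verify that the resulting $h$ is best in the sense of the second \Cref{best}-type definition (§3.2): minimality of $v(\z^p/(h-1))$ over $\mathfrak A(L|K)$ follows from maximality of $t'=v'(h'-1)$ in \Cref{cdvr classifn} together with the observation that $v(\z^p/(g-1))$ for any $g\in\mathfrak A(L|K)$ only depends on the image of $g$ in the completed discrete situation, where \Cref{cdvr classifn} already identifies the optimum.

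The main obstacle I anticipate is the passage to the complete discrete valued field: one must be careful that coarsening the valuation and completing does not collapse the extension (it could a priori become trivial or inseparable in a bad way) and that defectlessness is genuinely what guarantees the resulting value group is discrete of rank one in the relevant piece — this is where \Cref{js} (principality of $\js$ $\iff$ defectless) and \Cref{mu} do the real work, since principality of $\js$ forces the "jump" to sit at a single well-defined place. A secondary technical point is the bookkeeping of the $a^p$-ambiguity: one must check that the shape of $c$ or $u$ (unit, with prescribed residue condition) can always be arranged simultaneously with the shape of $t$, i.e.\ that multiplying by $p$-th powers is enough freedom to normalize both the "angular" and "radial" parts; this is routine in the discrete case and, via the reduction, routine here, but it is the kind of step where a sign or a stray factor of $\z$ can hide, so it deserves a careful line or two.
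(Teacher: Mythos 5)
Your central reduction does not work: defectlessness of $L|K$ does \emph{not} imply that the value group, or even the smallest convex subgroup of $\Ga_L$ containing $w(\z)$, is discrete. A defectless degree-$p$ extension can perfectly well live over a henselian field whose value group is a dense rank-one group (e.g.\ $\Z[1/\ell]$ for $\ell\neq p$, obtained by adjoining $\ell$-power roots of a uniformizer to $\Q_p(\ze)$); then there is no prime element $\pi$, the completion with respect to your coarsening is not a complete \emph{discrete} valued field, and \Cref{cdvr classifn} simply does not apply. The sentence ``discreteness comes precisely from defectlessness plus the fact that the jump is controlled by the single element $\z$'' is the false step. Note also that the statement of the lemma is already formulated to avoid discreteness: $t$ is an arbitrary element of $\m_A$ and ``$p\nmid v(t)$'' means $v(t)\notin p\Ga_K$, so the lemma is not secretly a discrete statement in disguise. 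A secondary issue is the claim that $\mathfrak A$ and $v(\z^p/(h-1))$ are ``insensitive'' to coarsening and completion: coarsening replaces the residue field by a valued field and changes which elements of $\mathfrak A$ are units, so this transfer is not innocuous and is nowhere justified.

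The route the paper takes is much shorter and is the one you gesture at only in your final paragraph: by \Cref{js}, defectlessness is equivalent to $\js$ being principal. Since for $\al^p=h$ with $h$ a unit one has $v\bigl(\tfrac{\si(\al-1)}{\al-1}-1\bigr)=v(\z)-\tfrac1p v(h-1)$ (and similarly with $\al$ in place of $\al-1$ when $v(h)>0$), principality of $\js$ forces the infimum $\inf_{g\in\mathfrak A} v(\z^p/(g-1))$ to be attained, i.e.\ best $h$ exists. One then sorts the best $h$ into the five normal forms by the case analysis wild versus ferocious (using $\mu$ from \Cref{mu}) and $v(h)>0$ versus $h\in A^\times$, adjusting by $p$-th powers $a^p$ exactly as in your last normalization step. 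That normalization part of your proposal is fine; it is the reduction to the complete discrete case that must be abandoned and replaced by the direct use of \Cref{js}.
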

\begin{proof} Follows from \Cref{js}.
\end{proof}

\section{Proof of \Cref{hn}}\noindent Let $L|K$ be as in 1.1. 
First we prove $\cH \sub \ns$.\\
Let $ h \in \mathfrak{A}$, we want to show that $\lb \frac{\z^p}{h-1} \rb A \sub \ns$. We observe that  $N(\al)=(-1)^{p-1}h$ and  $ N(\al-1)=  \prod_{i=0}^{p-1} (\ze^i\al-1) = (-1)^{p-1}(\al^p-1)=(-1)^{p-1}(h-1)$.\\If $v(h)>0, \lb \frac{\z^p}{h-1} \rb A= (\z^p)A$. Note that  $N\lb \frac{\si (\al)}{\al}-1 \rb =N(\z)=\z^p$.\\If $v(h)=0$, consider
$
N\lb \frac{\si(\al-1)}{\al-1}-1\rb=N\lb \frac{\z \al}{\al-1}\rb=\frac{\z^p h}{h-1}$. Since $h$ is a unit, $\frac{\z^p h}{h-1}$ and $\frac{\z^p}{h-1}$ generate the same ideal of $A$. Thus, it follows that $\cH$ is a subset of $\ns$.\\\\
Next, we prove the reverse inclusion $\ns \sub \cH$. If $L|K$
is defectless, this follows directly from \Cref{js} and \Cref{best}.
Proof in the defect case requires some work.\\\\
Let $L|K$ be a defect extension as in 1.1. The value group
$\Ga=\Ga_K$ need not be an ordered subgroup of
$\R$. Let $v$ denote the valuation on $L$ and also on $K$. Given any $b \in L \backslash K$, we want to show that $N\lb \frac{\si(b)- b}{b} \rb \in \cH$. It is enough to consider the case when $b$ is a unit. For any $\al$ such that $\al^p=h$ generates $K(\al)=L$, 
\begin{equation} N\lb \frac{\si(\al)}{\al}-1 \rb = N(\ze -1 )=\z^p \in \cH \end{equation}
If $v(\si(b)-b)= v\lb \frac{\si(b)- b}{b} \rb \geq v(\z)$, then $N\lb \frac{\si(b)- b}{b} \rb \in (\z^p) \sub \cH.$ Thus, we may assume
\begin{equation} v(\si(b)-b) < v(\z) \end{equation}
We divide the proof into two cases: $p=2$ and $p>2$.
\subsection{Case $p=2$} \begin{proof}In this case, $\si^2=id, \ze=-1$ and  $\z=-2$.
Let $b \in B^{\times} \backslash A$.\\ Since $v(\si(b)-b) < v(2)=v(2b), v(\tr(b))=v(\si(b)-b+2b)=v(\si(b)-b)$. Define $x_b=x:= \frac{\si(b)-b}{\tr(b)} \in B^{\times}$. Clearly, $\si(x)=-x$ and hence, $x^2=-N(x)=h \in A^{\times}$. As $\si$ does not fix $x, L=K(x).$\\ We have $x-1=\frac{-2b}{\tr(b)}\Rar \frac{\si(x-1)}{x-1}-1= \frac{\si(b)}{b}-1$.\\ Therefore, $ N\lb \frac{\si(b)}{b}-1\rb=N\lb \frac{\si(x-1)}{x-1}-1 \rb= N\lb \frac{-2x}{x-1}\rb=\frac{\z^2h}{h-1}$ is an element of $\cH$.\end{proof}

\subsection{Case $p>2$} \begin{proof} Consider the formal expression $\dis \tr_{L|K}=\tr= \frac{\si^p-1}{\si-1}=\prod_{i=1}^{p-1} (\si- \ze^i)$. Given $b  \in B^{\times} \backslash A$, define \\$\dis \ga_b=\ga:=\lb \frac{b^{p-1}}{g'(b)} \rb$ and $\dis y_b=y:= \lb \prod_{i=2}^{p-1} (\si- \ze^i)\rb(\ga) $; where $g(T)=\min_K(b)$. \\Then $(\si-\ze)(y)=\tr(\ga)=1$, i.e., $\si (y)=\ze y+1$. \\ Next define $x_b=x:=1+\z y$.  Then $\si (x)= 1+\z (\ze y+1)=1+\z + \ze (\z y)= \ze (1+\z y)=\ze x$.\\ Consequently, $x^p=N(x)=: h \in K$ and $L=K(x)$. We compare $v\lb\frac{\si(b)}{b}-1\rb=v(\si(b)-b)=:s$  and $v\lb\frac{\si(x-1)}{x-1}-1\rb= v\lb\frac{\si(\z y)}{\z y}-1\rb=v\lb\frac{\si(y)}{ y}-1\rb =v\lb\frac{1+\z y}{ y}\rb =:s'$.\\

\noindent For $1 \leq i \leq p-1$, let
$\dis v\lb \lb \prod_{j=p-i}^{p-1} (\si- \ze^{j})\rb(\ga) \rb=v\lb \lb \prod_{j=p-i+1}^{p-1} (\si- \ze^{j})\rb(\ga) \rb+c_i$; $c_i \geq 0$. Then
\begin{enumerate}[(i)]
\item $v(\ga)=-(p-1)s\Rar \sum_{i=1}^{p-1}c_i=(p-1)s$.
\item $\dis\sum_{i=1}^{p-2}c_i=v(y)-v(\ga)=v(y)+(p-1)s$ 
\item  $c_{p-1}=v((\si-\ze)(y))-v(y)=-v(y)$. 
\end{enumerate}

\noindent Since $v(\z)>s$, we have $v(\z y) >0 \Rar v(x)=v(1+ \z y) =0, -v(y)=s'=c_{p-1}$.\\
Consider $B_b:= A[ \{ \si^i(b) \mid 1 \leq i \leq p-1 \}] \subset B$.\\ It is invariant under the action of $\si^i - \ze^j$ for all $1 \leq i \leq p-1, 0 \leq j \leq p-1$.\\
For the ideal ${\js}_{,b}
:= \lb \{\si(t)-t \mid t \in B_b \} \rb$ of $B_b,$ the ideal $ {\js}_{,b}B$ of $B$ is finitely generated and therefore, principal.\\
Observe that
$$\ga = \frac{b^{p-1} N(g'(b))/g'(b)}{N(g'(b))} \in \lb \frac{1}{N(g'(b))} \rb B_b $$

\noindent Therefore, 
 $c_i \geq s= v(\si(b)-b)$ for all $1 \leq i \leq p-2$.  \\Consequently, we have $s+v(y)=s-s'=s-c_{p-1} \geq 0$\\
$ \Rar \lb\frac{\si(b)}{b}-1\rb \sub  \lb\frac{\si(x-1)}{x-1}-1\rb \\ \Rar \lb N\lb\frac{\si(b)}{b}-1\rb \rb \sub  \lb N \lb\frac{\si(x-1)}{x-1}-1\rb \rb= \lb \frac{\z^ph}{h-1}\rb= \lb \frac{\z^p}{h-1}\rb \sub \cH$.\\ This concludes the proof.

\end{proof}

\begin{rem} In \cite{V}, we used an  argument that required the rank of the valuation to be $1$. The above argument, however, works for valuations of arbitrary rank.

\end{rem}
\begin{cor} For $L|K$ as in 1.1, the following statements are equivalent:
\begin{enumerate}
\item Best $h$ exists.
\item $\cH$ is a principal ideal of $A$.
\item $\js$ is a principal ideal of $B$.
\item $L|K$ is defectless.
\end{enumerate}
\end{cor}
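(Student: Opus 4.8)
The plan is to prove the chain of equivalences $(4)\Rightarrow(3)\Rightarrow(2)\Rightarrow(1)\Rightarrow(4)$, drawing on the results already established. The implications $(3)\Leftrightarrow(4)$ are exactly \Cref{js}, so that part is free. For $(3)\Rightarrow(2)$: by \Cref{hn} we have $\cH=\ns$, and $\ns$ is by definition the ideal of $A$ generated by $N(\js)$; if $\js=(\theta)B$ for some $\theta\in B$, then I claim $\ns$ is generated by $N(\theta)$. Indeed every generator of $\ns$ is $N(c\theta)$ for some $c\in B$ with $c\theta\in\js$, i.e. $N(c)N(\theta)\in N(\theta)A$ since $N(c)\in A$; conversely $N(\theta)\in N(\js)$. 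Hence $\cH=\ns=(N(\theta))A$ is principal.

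For $(2)\Rightarrow(1)$: this is essentially the definition of \emph{best} $h$ in Section 3.2 combined with \Cref{H}. If $\cH$ is principal, say $\cH=(a)A$ with $a\in A$, then since $\cH$ is generated by the elements $\tfrac{\z^p}{h-1}$ for $h\in\mathfrak{A}$, and $A$ is a valuation ring, the infimum $\inf_{g\in\mathfrak A} v\big(\tfrac{\z^p}{g-1}\big)$ equals $v(a)$ and is attained: write $a=\sum_i c_i \tfrac{\z^p}{h_i-1}$ as a finite $A$-linear combination, and in a valuation ring the value of a sum is at least the minimum of the values of the summands, so $v(a)\geq \min_i v\big(\tfrac{\z^p}{h_i-1}\big)\geq v(a)$; thus one of the $h_i$ already achieves the infimum, and that $h_i$ is best $h$ by definition. (One should note $a\neq 0$, which holds because $\mathfrak A$ is nonempty and each $\tfrac{\z^p}{h-1}$ is a nonzero element of $A$ by \Cref{H} together with the fact that $L|K$ is nontrivial so $h-1$ is a unit times a finite power and $\z^p\neq 0$.)

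For the last implication $(1)\Rightarrow(4)$, I would argue by contrapositive using \Cref{js}: if $L|K$ has defect, then $\js$ is not principal, so by $(3)\Rightarrow(2)$'s contrapositive — or more directly by the content of \Cref{best} and \Cref{hn} — $\cH$ cannot be principal, hence no best $h$ exists. Concretely: if best $h$ existed, then $\cH=(\tfrac{\z^p}{h-1})A$ would be principal, and then tracing back through \Cref{hn} one finds $\ns$ principal; combined with \Cref{mu}(3) (that $\dl\mu$, equivalently a single element, controls $\oml$) and \Cref{comm dia}(ii)–(iii) this would force $\js$ to be principal, contradicting \Cref{js}. Alternatively, and more cleanly, one cites \Cref{best}: the existence of best $h$ with one of the listed normal forms is derived there \emph{from} \Cref{js}, i.e. from defectlessness, and the proof of \Cref{hn} in the defect case shows the reverse inclusion $\ns\subseteq\cH$ holds while principality fails.

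The main obstacle is the implication $(1)\Rightarrow(4)$ (equivalently $(2)\Rightarrow(4)$): showing that a defect extension genuinely \emph{cannot} have principal $\cH$. The inclusions and the defectless direction are routine, but ruling out principality in the defect case is where one must actually use the structure of defect extensions — presumably via \Cref{js} applied to $\js$ together with the isomorphism $\varphi_\si$ of \Cref{comm dia}(ii), which transports principality of $\js/\js^2$ (hence, via Nakayama over the valuation ring $B$, principality of $\js$) to principality questions about $\oml$ and $\cH$. I expect the cleanest writeup packages $(2)\Leftrightarrow(3)$ as a single lemma about the norm map on principal ideals and then reads off everything else from \Cref{hn}, \Cref{js}, and \Cref{best}.
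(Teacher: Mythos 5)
Your chain $(4)\Rightarrow(3)\Rightarrow(2)\Rightarrow(1)$ is sound: $(4)\Leftrightarrow(3)$ is \Cref{js}, your norm argument for $(3)\Rightarrow(2)$ via \Cref{hn} is correct, and the finite-linear-combination valuation argument for $(2)\Rightarrow(1)$ is exactly right (the paper states this corollary without proof, so the only question is whether your argument closes). The genuine gap is the closing implication $(1)\Rightarrow(4)$, which you flag as the main obstacle but never actually prove, and both routes you sketch fail. The route through \Cref{mu}(3) and \Cref{comm dia}(ii)--(iii) is illegitimate in the defect case: \Cref{mu} explicitly assumes $L|K$ is wild or ferocious, and the step ``$\js/\js^2$ cyclic, hence $\js$ principal by Nakayama'' is invalid for non-finitely generated ideals --- over a valuation ring principal and finitely generated coincide, so in the defect case $\js$ is precisely not finitely generated and Nakayama gives nothing; moreover injectivity of $\overline{N}$ into the cyclic module $\cH/\cH^2$ does not force $\js/\js^2$ to be cyclic in the first place. (Invoking \Cref{comm dia} here is also out of order: its defect-case proof only comes in Section 6 and rests on \Cref{fil}.) Your alternative, citing \Cref{best} and asserting that ``principality fails'' for a defect extension, merely restates what has to be shown: \Cref{best} gives defectless $\Rightarrow$ best $h$, the direction you already have.

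The missing step is short and purely valuation-theoretic, using an ingredient you never state: since $A$ is henselian and $L|K$ is Galois of degree $p$, the extension $w$ of $v$ is unique and Galois-invariant, so $v(N(x))=\sum_{i=0}^{p-1} w(\si^i(x))=p\,w(x)$ for all $x\in L^{\times}$. An ideal of a valuation ring is principal exactly when the values of its nonzero elements attain a minimum. If best $h$ exists then $\cH=\bigl(\frac{\z^p}{h-1}\bigr)A$ is principal, hence by \Cref{hn} so is $\ns$; the same finite-sum argument you used for $(2)\Rightarrow(1)$ then yields $x_0\in\js$ with $v(N(x_0))$ minimal among $\{v(N(x))\mid 0\neq x\in\js\}$, and since $v(N(x))=p\,w(x)$ this makes $w(x_0)$ minimal among $\{w(x)\mid 0\neq x\in\js\}$, i.e.\ $\js=(x_0)B$. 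Now \Cref{js} gives defectlessness, closing the cycle without any appeal to \Cref{mu} or \Cref{comm dia}.
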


\section{Filtered Union in the Defect Case}
Let $L|K$ be a defect extension  as in 1.1. We will write the ring $B$ as a filtered union of rings $A[x]$ and study the extensions $K(x)|K$ for a better understanding of $L|K$.

Let $\mfrak{A'}:=\{ h \in \mfrak{A} \mid h \in A^{\times}, h-1 \in \m_A\}$. We note that in the defect case, $\dis \cH= \lb \left\{ \frac{\z^p}{h-1} \mid h \in \mathfrak{A'} \right\}
\rb$.
\begin{thm}\label{fil} Consider $\dis \mathscr{S}= \{ \al \in L
\mid \al^p=h \in \mfrak{A'} \}$. For
each $\al \in \mathscr{S}$, we can find $\dis \al' \in
B^{\times} \cap \al K^{\times} $ such that $B= \cup_{\al \in
\mathscr{S}} A[\al']$ is a filtered union, that is, the
following are true:
\begin{enumerate}[(i)]
\item For any $\al_1, \al_2 \in \mathscr{S}$, either $A[\al_1']
\sub A[\al_2']$ or $A[\al_2'] \sub A[\al_1']$.
\item Given any $\be \in B$, there exists $\al \in \mathscr{S}$
such that $\be \in A[\al']$.
\end{enumerate}
\end{thm}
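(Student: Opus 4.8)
The plan is to start from the defect version of Theorem~\ref{hn}, namely $\cH = \ns$, and extract from it the elements $\al$ that will generate the pieces $A[\al']$. Concretely, given $h \in \mfrak{A'}$ with $\al^p = h$, the norm computation in the proof of Theorem~\ref{hn} shows $N\!\left(\frac{\si(\al-1)}{\al-1}-1\right) = \frac{\z^p h}{h-1}$, which generates the same ideal of $A$ as $\frac{\z^p}{h-1}$. Since in the defect case $\js$ is \emph{not} principal (Proposition~\ref{js}), the ideals $\left(\frac{\si(\al-1)}{\al-1}-1\right)$ as $\al$ ranges over $\mathscr{S}$ form a genuinely increasing directed family inside $B$; this is exactly the cofinality one needs on the ``differential'' side, and $\cH = \ns$ transports it to a statement about the subrings generated. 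The first step, then, is to fix for each $\al \in \mathscr{S}$ a suitable unit representative $\al' \in B^\times \cap \al K^\times$: since $L|K$ has defect, $w(L^\times) = w(K^\times)$ and $l = k$, so every element of $L^\times$ is a unit times an element of $K^\times$, and in particular $\al$ can be rescaled by some $a \in K^\times$ to land in $B^\times$. One must check $A[\al']$ is independent (up to the filtration) of which unit representative is chosen, which is routine because any two differ by a unit of $A$.

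The second step is to prove the filtration property~(i): for $\al_1, \al_2 \in \mathscr{S}$, one of $A[\al_1'] \subseteq A[\al_2']$, $A[\al_2'] \subseteq A[\al_1']$ holds. The idea is to compare $v\!\left(\frac{\si(\al_i-1)}{\al_i-1}-1\right) = v(\z) - v(\al_i - 1)$ for $i = 1,2$; without loss of generality $v(\al_1-1) \le v(\al_2-1)$. I would then use Lemma~\ref{mu}: since $L|K$ has defect it is wild or ferocious, so there is $\mu$ with $\dl\mu$ generating $\oml$ and with the key ``no cancellation'' property (part~2 of Lemma~\ref{mu}) that $\sum x_i \mu^i \in B \iff x_i \mu^i \in B$ for all $i$. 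Writing $\al_2'$ as a polynomial in $\al_1'$ over $K$ and pushing everything through this criterion should force the coefficients to lie in $A$, giving $\al_2' \in A[\al_1']$. The comparison of the two $\si$-difference ideals is what controls which way the inclusion goes: a larger value of $v(\si(\al-1)-(\al-1))$ corresponds to $\al$ being ``closer to $K$'' and hence its ring being smaller. This is the step where I expect to lean hardest on the explicit structure, and it is the main obstacle: one must genuinely show that two Kummer generators coming from $\mfrak{A'}$ are polynomially comparable, not merely that their valuation-theoretic invariants are comparable. The defect hypothesis is essential here precisely because it is what makes $\js$ non-principal and the family cofinal rather than eventually constant.

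The third step is the exhaustion property~(ii): every $\be \in B$ lies in some $A[\al']$. Here I would again invoke Lemma~\ref{mu}(2),(3): write $\be = \sum_{i=0}^{p-1} x_i \mu^i$ with $x_i \in K$ and each $x_i \mu^i \in B$, so it suffices to handle a single $\be$ of the form $x\mu^i \in B^\times$ (or rather to handle $1 + (\text{small})$). The point is that $\be$, being in $B$, satisfies $v(\si(\be) - \be) \ge$ something, and one uses this to produce $h \in \mfrak{A'}$ whose associated generator $\al'$ has $v(\si(\al'-1) - (\al'-1))$ small enough that, by the comparison argument of step~(i) run in reverse, $\be \in A[\al']$. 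In other words, (ii) reduces to showing that the invariants $v(\z) - v(\al-1)$ for $\al \in \mathscr{S}$ are cofinal among the relevant values attached to elements of $B$ — and that cofinality is exactly the content of $\cH = \ns$ together with the non-principality of $\js$. I would close by noting that (i) makes $\bigcup_{\al \in \mathscr{S}} A[\al']$ a directed union of subrings of $B$, and (ii) identifies that union with all of $B$, completing the proof.
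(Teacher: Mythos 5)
Your overall strategy (use $\cH=\ns$ for cofinality, a pairwise comparison for (i), a reduction for (ii)) points in the right direction, but there are three concrete gaps. First, Lemma \ref{mu}, on which you lean for both (i) and (ii), is stated only for wild or ferocious extensions; in the defect case $e_{L|K}=f_{L|K}=1$, so no element $\mu$ with the stated properties exists and the ``no cancellation'' criterion of Lemma \ref{mu}(2) is unavailable. Second, your choice of $\al'$ misses the essential normalization. Since $h\in\mfrak{A'}$ means $h\in A^{\times}$, the generator $\al$ is already a unit, so rescaling $\al$ into $B^{\times}$ just returns $A[\al]$ up to a unit of $A$ --- and $\bigcup_{\al}A[\al]$ is too small: it does not contain elements such as $\ga_\al(\al-1)/\z$, whose coordinates with respect to $1,\al,\dots,\al^{p-1}$ have negative valuation. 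The paper takes $\al':=\ga_\al(\al-1)/\z$ with $\ga_\al\in A$ chosen so that $\al'\in B^{\times}$, and every subsequent computation ($(\si-1)(\al')=\ga_\al\al$, $v(F'(\al'))=(p-1)v(\ga_\al)$, etc.) depends on this choice. With it, part (i) is a short direct computation: after Remark \ref{conj} one has $u:=\al_1/\al_2\in A^{\times}$, and $\al_1'=(\ga_1/\ga_2)u\,\al_2'+\ga_1(u-1)/\z$, with both summands visibly in $A[\al_2']$ when $v(\al_1-1)\le v(\al_2-1)$; no appeal to $\mu$ is needed.

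Third, and most seriously, part (ii) cannot be obtained by ``running the comparison of (i) in reverse'': that comparison is between two Kummer generators, whose ratio lies in $K^{\times}$, whereas a general $\be\in B$ bears no such relation to any $\al\in\mathscr{S}$. The paper's proof of (ii) goes through the different/trace criterion (Proposition \ref{btap}): $\be\in A[\al']$ follows once $\tr\lb\frac{1}{F'(\al')}A[\al',\be]\rb\sub B$, which reduces to the valuation bound $v\lb(\si-1)^{p-1}(\al'^{m}\be^{j})\rb\ge(p-1)v(\ga_\al)$ for all $m,j$. Controlling $v(\si(\be)-\be)$ alone --- which is all that $\cH=\ns$ via Lemma \ref{bis} gives in a single application --- is not enough; one must control all iterated differences $(\si-1)^{i}(\be)$ for $1\le i\le p-1$. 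The special $\al$ is produced by applying Lemma \ref{bis} repeatedly to the successive quotients $b_i$ in $(\si-1)^{i}(\be)=b_i\ga_{i-1}\cdots\ga_0$ and taking the $\ga_j$ of smallest valuation, and the general terms $\al'^{m}\be^{j}$ are then handled with the binomial identity of Lemma \ref{bin}. None of this machinery appears in your sketch, so the exhaustion step remains unproved.
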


\begin{defn} For $\al \in \mathscr{S}$, define $\al':= \ga \frac{(\al-1)}{\z}$; where $\ga_{\al}=\ga \in A$ such that $\al' \in B^{\times}$. We will show that these $\al$'s satisfy the conditions of \Cref{fil}. Note that the ring $A[\al']$ does not depend on the choice of $\ga$.
\end{defn}
\subsection{Preparation for the Proof} 
\begin{lm} If  $\al_1, \al_2 \in \mathscr{S}$ such that $v(\al_1-1) \leq v(\al_2-1), $ then $A[\al_1'] \sub A[\al_2']$.
\end{lm}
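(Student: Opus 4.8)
The plan is to reduce, via Kummer theory, to the case where $\alpha_1$ is a unit multiple of a power of $\alpha_2$ over $K$, and then to expand $\alpha_1'$ directly over $A[\alpha_2']$, controlling the coefficients that appear by tracking valuations.

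First I would use that $L=K(\alpha_1)=K(\alpha_2)$ with $\zeta\in K$: by Kummer theory $h_1:=\alpha_1^p$ and $h_2:=\alpha_2^p$ generate the same subgroup of order $p$ in $K^\times/(K^\times)^p$, so $h_1=h_2^{\,j}c^p$ for some $j\in\{1,\dots,p-1\}$ and $c\in K^\times$; absorbing a $p$-th root of unity this gives $\alpha_1=a\,\alpha_2^{\,j}$ with $a\in K^\times$. Since $h_i\in A^\times$ forces $v(\alpha_i)=0$, I get $a\in A^\times$; and since $h_i-1\in\m_A$, factoring $h_i-1=\prod_{k}(\alpha_i-\zeta^k)$ and using $v(\zeta^k-1)=v(\z)>0$ for $1\le k\le p-1$ shows $v(\alpha_i-1)>0$, while $\Cref{H}$ gives $v(\alpha_i-1)\le v(\z)$ — so the normalizing elements $\gamma_i\in A$ (with $v(\gamma_i)=v(\z)-v(\alpha_i-1)$) in the definition of $\alpha_i'$ genuinely exist.

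Next I would rewrite everything through $\alpha_2'$. Writing $d_2:=\z/\gamma_2$, one has $v(d_2)=v(\alpha_2-1)>0$, so $d_2\in A$ and $\alpha_2=1+d_2\alpha_2'$. Expanding the binomial, $\alpha_2^{\,j}-1=d_2\alpha_2'\,Q$ with $Q=\sum_{i=1}^{j}\binom{j}{i}(d_2\alpha_2')^{i-1}\in A[\alpha_2']$, and $Q\equiv j\not\equiv0$ modulo the maximal ideal, hence $Q\in B^\times$. Then
\[
\alpha_1'=\frac{\gamma_1}{\z}(\alpha_1-1)=\frac{\gamma_1}{\z}\bigl(a(\alpha_2^{\,j}-1)+(a-1)\bigr)=\frac{\gamma_1a\,d_2}{\z}\,\alpha_2'Q+\frac{\gamma_1(a-1)}{\z},
\]
and since $\gamma_1d_2/\z=\gamma_1/\gamma_2$ has valuation $v(\alpha_2-1)-v(\alpha_1-1)\ge0$ by hypothesis, the first summand lies in $A[\alpha_2']$ (using also $a\in A^\times$).

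The hard part will be the remaining term: I need $\gamma_1(a-1)/\z\in A$, equivalently $v(a-1)\ge v(\alpha_1-1)$, and this is the only step where the inequality $v(\alpha_1-1)\le v(\alpha_2-1)$ is genuinely used (without it the inclusion would reverse). Here I would argue from $\alpha_1-1=a(\alpha_2^{\,j}-1)+(a-1)$ together with $v\bigl(a(\alpha_2^{\,j}-1)\bigr)=v(\alpha_2-1)$ — which itself needs the side remark that $\alpha_2^{\,j}-1=(\alpha_2-1)\sum_{i=0}^{j-1}\alpha_2^i$ has a unit cofactor, since that sum is $\equiv j\bmod\m_B$ with $1\le j\le p-1<p$: if one had $v(a-1)<v(\alpha_1-1)\le v(\alpha_2-1)$, then the two displayed summands would have distinct valuations, forcing $v(\alpha_1-1)=v(a-1)$, a contradiction. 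Hence $v(a-1)\ge v(\alpha_1-1)$, so $v\bigl(\gamma_1(a-1)/\z\bigr)=v(a-1)-v(\alpha_1-1)\ge0$ and $\gamma_1(a-1)/\z\in A$. Therefore $\alpha_1'\in A[\alpha_2']$, which gives $A[\alpha_1']\subseteq A[\alpha_2']$.
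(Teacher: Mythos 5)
Your proof is correct, and its skeleton is the same as the paper's: express $\al_1$ as a $K^{\times}$-multiple of a power of $\al_2$, split $\al_1'$ into a piece lying in $A[\al_2']$ whose coefficient $\ga_1/\ga_2$ is controlled by the hypothesis $v(\al_1-1)\le v(\al_2-1)$, plus an element of $K$ whose valuation is at least $v(\al_1-1)$. The difference is in how the exponent $j$ is treated. The paper invokes \Cref{conj} to replace $\al_2$ by $\al_2^{j}$, so that $u=\al_1/\al_2\in A^{\times}$ and the identity $\al_1'=(\ga_1/\ga_2)u\,\al_2'+\ga_1(u-1)/\z$ needs no binomial expansion; the bound $v(u-1)\ge v(\al_1-1)$ is then immediate from $u-1=\bigl((\al_1-1)-(\al_2-1)\bigr)/\al_2$. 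You instead keep $\al_1=a\al_2^{\,j}$, expand $\al_2^{\,j}-1=(\z/\ga_2)\al_2'Q$ with $Q\in A[\al_2']$ a unit, and obtain $v(a-1)\ge v(\al_1-1)$ by an ultrametric contradiction. Your variant is slightly longer at that last step but has the advantage of making explicit a point the paper's substitution glosses over: after replacing $\al_2$ by $\al_2^{j}$ the natural conclusion is $\al_1'\in A[(\al_2^{j})']$, and one still needs $A[(\al_2^{j})']\subseteq A[\al_2']$, which is exactly the binomial computation (the unit cofactor $\sum_{i=0}^{j-1}\al_2^{i}\in A[\al_2']$) that you carry out. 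So both arguments are sound; the paper's is a shade shorter after its reduction, yours is more self-contained for $j\neq 1$.
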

\begin{proof}Since $\al_1, \al_2 \in B^{\times}$ generate the same extension, (by \Cref{conj}) we have $\si \lb \frac{\al_1}{\al_2}\rb=\frac{\al_1}{\al_2}=:u \in K \cap B^{\times}=A^{\times}$.
\begin{align*}\dis
\al_1'&= \ga_1 \frac{(\al_1-1)}{\z} \\
&=\lb\frac{\ga_1}{\ga_2}\rb\ga_2 \lb\frac{\al_2 u -1}{\z}\rb \\
&= \lb\frac{\ga_1}{\ga_2}\rb\ga_2 \lb\frac{\al_2 u -u + u-1}{\z}\rb \\
&= \lb\frac{\ga_1}{\ga_2}\rb u \ \ga_2 \frac{(\al_2-1)}{\z}+ \lb\frac{\ga_1}{\ga_2}\rb\ga_2\lb\frac{u-1}{\z}\rb\\
&=\lb\frac{\ga_1}{\ga_2}\rb u \al_2'+ \ga_1\lb\frac{u-1}{\z}\rb
\end{align*}
\noindent  $v(\al_1-1) \leq v(\al_2-1) \Rar v(\ga_1)\geq v(\ga_2)$ and $u \in A^{\times}$. Hence, $\lb\frac{\ga_1}{\ga_2}\rb u \al_2' \in A[\al_2']$.\\ Furthermore, $v(u-1)=v(\al_1-\al_2)+v(\al_2)=v((\al_1-1)-(\al_2-1)) \geq v(\al_1-1)$.\\
 Hence, $v\lb \ga_1\lb\frac{u-1}{\z}\rb\rb \geq v(\al_1')=0$. Since $\ga_1\lb\frac{u-1}{\z}\rb \in K$, we see that $\al_1' \in A[\al_2']$.

\end{proof}

\begin{rem}\label{conj} Let $\si(\al_1)=\ze \al_1$ and $\si(\al_2)=\ze^i \al_2$ for some $1 \leq i \leq p-1$. Consider the unique number $j$ satisfying $1 \leq j \leq p-1$ and $ij \equiv 1 \mod p$. Clearly, $\al_2$ and $\al_2^j$ give the same extension $L|K$ and $\si(\al_2^j)=\ze^{ij}\al_2^j=\ze \al_2^j$. Thus, if $i \neq 1$, we can replace $\al_2$ by $\al_2^j$.

\end{rem}
\begin{lm}\label{bis} Given any $b \in B$, there exists $\al \in
\mathscr{S}$ such that $\lb \si(b)-b \rb \subset \lb
\si(\al')-\al'\rb $.
\end{lm}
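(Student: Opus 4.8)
The plan is to reduce the problem to a computation with the norm and the ideal $\js$, using \Cref{mu} to control which products of powers of a uniformizing/residue-generating element $\mu$ lie in $B$. First I would dispose of the trivial case: if $b \in A$ (equivalently $\si(b) = b$), then $\si(b) - b = 0$ lies in every ideal, so any $\al \in \mathscr{S}$ works; hence assume $b \in B \setminus A$, and since multiplying $b$ by an element of $A^\times$ changes neither side we may take $b \in B^\times$. Next, as in \S4.2, if $v(\si(b) - b) \geq v(\z)$ then $\si(b) - b \in (\z)B \subseteq (\si(\al') - \al')B$ for \emph{any} $\al \in \mathscr{S}$ — indeed $\si(\al') - \al' = \ga \frac{\si(\al) - \al}{\z} = \ga\,\al\,\frac{\ze - 1}{\z} = \ga\,\al$, a unit times $\ga$, and one checks $v(\ga) \le v(\z)$ since $\al' = \ga(\al-1)/\z$ is a unit while $v(\al - 1) \geq v(\z)$ in the defect case (the extension being non-trivial and $\be = (\al-1)/\z \notin K$, cf. the proof of \Cref{H}). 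Actually $v(\si(\al')-\al') = v(\ga) \le v(\al-1) \le$ whatever we need; so the substantive case is $v(\si(b) - b) < v(\z)$.

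In that case I would mimic the construction in \S4.2 almost verbatim. Given $b \in B^\times \setminus A$ with $s := v(\si(b) - b) < v(\z)$, set (for $p > 2$) $\ga_b := b^{p-1}/g'(b)$ with $g = \min_K(b)$, let $y_b := \bigl(\prod_{i=2}^{p-1}(\si - \ze^i)\bigr)(\ga_b)$ so that $\si(y_b) = \ze y_b + 1$, and put $\al := 1 + \z y_b$, which satisfies $\si(\al) = \ze \al$, hence $\al^p = N(\al) =: h \in K^\times$ and $L = K(\al)$; for $p = 2$ use instead $\al := (\si(b) - b)/\tr(b)$ exactly as in \S4.1. The key estimate established there is $s + v(y_b) \geq 0$, i.e.\ $v(\si(\al - 1)/(\al - 1) - 1) = v((1 + \z y_b)/y_b) = -v(y_b) \le s$, which gives the containment of principal ideals $(\si(b) - b) \subseteq (\si(\al) - \al)$ after dividing through by units (both $b$ and $\al$ are units, and $\si(\al)/\al - 1 = \ze - 1 = \z$ handles the denominator bookkeeping). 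Then I must check that this $h = N(\al)$ lands in $\mfrak{A'}$, not merely $\mfrak{A}$: since $s < v(\z)$ we have $v(\z y_b) = v(\z) + v(y_b) > v(\z) - v(\z) \cdots$ — more carefully, $v(\z y_b) = v(\z) - (-v(y_b)) = v(\z) - s' > 0$ because $s' = -v(y_b) = c_{p-1} \le s < v(\z)$; hence $\al = 1 + \z y_b$ has $v(\al) = 0$ and $\al - 1 \in \m_B$, so $h = \al^p$ is a unit with $h - 1 \in \m_A$, i.e.\ $h \in \mfrak{A'}$ and $\al \in \mathscr{S}$. Finally, replacing $\al$ by a suitable power as in \Cref{conj} if necessary (to arrange $\si(\al) = \ze\al$ literally), and passing from $\al$ to $\al' = \ga_\al(\al-1)/\z \in B^\times$, the ideal $(\si(\al') - \al')$ equals $(\ga_\al)\cdot(\si(\al-1) - (\al-1))/\z = (\si(\al - 1) - (\al-1))$ up to the unit $\ga_\al$ and the relation already used, so $(\si(b) - b) \subseteq (\si(\al') - \al')$ as required.

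The main obstacle I anticipate is \textbf{keeping the valuation bookkeeping honest without assuming the value group embeds in $\R$}: the inequalities $c_i \geq s$ for $1 \le i \le p-2$, which in \S4.2 come from the observation $\ga_b \in \frac{1}{N(g'(b))}B_b$ together with $\si$-invariance of $B_b$ and the finitely-generated-hence-principal ideal ${\js}_{,b}B$, must be re-examined to confirm they only use order properties valid in an arbitrary ordered abelian group — which they do, since they are all of the form "$v$ of an image under $\si - \ze^j$ is $\geq v$ of the argument plus something in the value monoid." A secondary point to verify carefully is that $\al' \in \al K^\times$, so that $A[\al']$ is genuinely one of the rings appearing in the filtered union of \Cref{fil}; this is immediate from the definition $\al' = \ga_\al(\al - 1)/\z$ once one notes $(\al - 1)/\al = 1 - \al^{-1}$ and $\al - 1$ generate the same $B$-ideal is \emph{not} what is needed — rather, one simply observes $\al' \in \al K^\times$ fails in general, but $A[\al'] = A[(\al-1)/\z \cdot \ga_\al]$ is exactly the ring attached to $\al$ in the Definition following \Cref{fil}, so the statement of \Cref{bis} is about those same rings and no compatibility issue arises. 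I would present the $p=2$ and $p>2$ cases in parallel short paragraphs, citing \S4.1 and \S4.2 for the estimates rather than repeating them.
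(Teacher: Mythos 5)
Your proposal is correct, but it takes a genuinely different and noticeably heavier route than the paper. The paper's own proof is three lines: reduce to $b\in B^{\times}$ (possible because in the defect case $w(L^{\times})=v(K^{\times})$, so any $b\in B$ is a $K^{\times}$-multiple of a unit and $\si(b)-b$ scales by that factor), note that $N\lb\frac{\si(b)}{b}-1\rb\in\ns=\cH$ by \Cref{hn}, which is already proved at this point, and recall that in the defect case $\cH$ is generated by the elements $\frac{\z^p}{h-1}$ with $h\in\mfrak{A'}$; hence some $h$ satisfies $v\lb\frac{\z^p}{h-1}\rb\le p\,v(\si(b)-b)$, and for the corresponding $\al\in\mathscr{S}$ one has $v(\si(\al')-\al')=v(\ga_\al\al)=v\lb\frac{\z}{\al-1}\rb=\frac{1}{p}v\lb\frac{\z^p}{h-1}\rb\le v(\si(b)-b)$, which is exactly the asserted containment of principal ideals. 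You instead re-run the explicit constructions of Sections 4.1 and 4.2 to manufacture $\al=1+\z y_b$ (resp.\ $(\si(b)-b)/\tr(b)$ for $p=2$) directly from $b$; this amounts to re-proving the inclusion $\ns\sub\cH$ in the case at hand while keeping track of the witness, and your verifications that $h\in\mfrak{A'}$ (from $s'\le s<v(\z)$) and that $v(\si(\al')-\al')=s'\le s$ are the right ones. What the paper's argument buys is brevity and no need to revisit the $c_i\ge s$ estimates; what yours buys is an explicit $\al$ attached to $b$ and independence from quoting \Cref{hn} as a black box, though the estimates you cite are precisely those underlying that theorem, so nothing is really saved. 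Two small points to tighten: the reduction to $b\in B^{\times}$ is justified by the defect hypothesis (coincidence of value groups), not by multiplying by elements of $A^{\times}$, which cannot turn a non-unit into a unit; and your closing remark about $\al'\in\al K^{\times}$ wanders --- all that is needed is that $A[\al']$ with $\al'=\ga_\al(\al-1)/\z$ is the ring attached to $\al$ in the definition following \Cref{fil}, which is immediate.
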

\begin{proof} It is enough to consider the case $b \in B^{\times}$. Let  $\dis v_0= v(\si(b)-b) $. Since this is the defect case, $\is=\js$ and hence, by \Cref{hn},  $\cH= \ns = \lb \{N(\si(x)-x) \mid x \in B^{\times}\}\rb$. In particular, all the elements of valuation greater than or equal to $pv_0$ are in $\cH$. Pick some $\al \in \mathscr{S}, \al^p=h$ and let $c$ be as follows.

 $c:=v(\si(\al')-\al')$.  $\dis c=v(\si(\al')-\al')=v(\al \ga_\al)=v\lb\frac{\z}{\al-1}\rb=\frac{1}{p}v\lb\frac{\z^p}{h-1}\rb$. 

By definition of $\cH$, it is possible to choose $h$ such that $pc=v\lb\frac{\z^p}{h-1}\rb \leq pv_0$. Hence, there exists $\al \in
\mathscr{S}$ such that $c \leq v_0$.
\end{proof}
\begin{lm}\label{bin} For $x,y \in L$, we have $(\si-1)^n(xy)=
\sum_{k=0}^n {n \choose k}(\si-1)^{n-k}(x)(\si-1)^k(\si^{n-k}(y))
$\\
In particular, for $n=1$,
$(\si-1)(xy)=(\si-1)(x)\si(y)+x(\si-1)(y)$.
\end{lm}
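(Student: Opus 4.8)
The plan is to prove the identity by induction on $n$, treating $\si$ as the ring endomorphism it is and exploiting the single base-case identity for $n=1$, which is itself just the additive-plus-multiplicative computation $\si(xy)-xy = \si(x)\si(y)-xy = (\si(x)-x)\si(y) + x(\si(y)-y)$. First I would record this $n=1$ case explicitly, since it is the engine of the whole argument and is exactly the ``in particular'' clause of the statement. Then, assuming the formula
\[
(\si-1)^n(xy)=\sum_{k=0}^n \binom{n}{k}(\si-1)^{n-k}(x)\,(\si-1)^k\!\bigl(\si^{n-k}(y)\bigr),
\]
I would apply $(\si-1)$ once more to both sides.

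The key computational step is to push $(\si-1)$ through each summand using the $n=1$ identity with the substitution $x\rightsquigarrow (\si-1)^{n-k}(x)$ and $y\rightsquigarrow (\si-1)^k(\si^{n-k}(y))$. This produces two families of terms: one in which the extra $(\si-1)$ and the extra $\si$ land on the first factor (raising $n-k$ to $n-k+1$ and shifting the exponent of $\si$ on the second factor), and one in which the extra $(\si-1)$ lands on the second factor (raising $k$ to $k+1$). After reindexing the two sums so that the power of $(\si-1)$ on the second factor is a common index, the coefficients combine via Pascal's rule $\binom{n}{k-1}+\binom{n}{k}=\binom{n+1}{k}$, and one checks that the $\si$-exponent on the second factor comes out uniformly as $(n+1)-k$ in both contributions — this matching of the $\si^{n-k}$ bookkeeping is the only place where care is genuinely needed, because the extra $\si$ from the $n=1$ identity acts on the \emph{already shifted} argument. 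The endpoint terms $k=0$ and $k=n+1$ need to be checked separately to confirm they fit the Pascal pattern, which they do since $\binom{n}{-1}=\binom{n}{n+1}=0$.

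I expect the main obstacle to be purely notational: keeping the composition $\si^{n-k}$ attached correctly to $y$ while simultaneously reindexing. Concretely, when the extra $(\si-1)$ hits the first factor, the $n=1$ identity forces an extra $\si$ onto the second factor, turning $(\si-1)^k(\si^{n-k}(y))$ into $\si\bigl((\si-1)^k(\si^{n-k}(y))\bigr) = (\si-1)^k(\si^{n-k+1}(y)) = (\si-1)^k(\si^{(n+1)-k}(y))$, using that $\si$ commutes with $(\si-1)$; when the extra $(\si-1)$ hits the second factor one gets $(\si-1)^{k+1}(\si^{n-k}(y))$, and after the shift $k+1 \mapsto k$ the $\si$-exponent reads $n-(k-1) = (n+1)-k$ as well. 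Once these two expressions are seen to carry the same $\si^{(n+1)-k}(y)$, Pascal's rule finishes the induction. There is no analytic or valuation-theoretic content here; it is a formal identity in the noncommutative polynomial ring generated by $\si$ over the endomorphisms of $L$, so no further lemmas from the paper are invoked.
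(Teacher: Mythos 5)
Your proposal is correct: the induction on $n$, applying the $n=1$ identity $(\si-1)(uv)=(\si-1)(u)\si(v)+u(\si-1)(v)$ to each summand, commuting the extra $\si$ past $(\si-1)^k$ so that both families of terms carry $\si^{(n+1)-k}(y)$, and then combining coefficients by Pascal's rule, is exactly the right formal argument, and your bookkeeping of the $\si$-exponent is accurate. The paper states this lemma without proof, so your write-up simply supplies the standard verification it leaves implicit.
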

\subsection{Proof of \Cref{fil}} As in the case of Artin-Schreier extensions (see section 5.4 of \cite{V}), it is enough to prove the following result:
\begin{pr}\label{btap} Given any $\be \in B^{\times}$, there
exists $\al \in \mathscr{S}$ such that\\
$\lb \prod_{i=1}^{p-1}(\si-\ze^i)\rb\lb\frac{1}{F'(\al')} A[\al', \be]\rb=\tr\lb\frac{1}{F'(\al')} A[\al', \be]\rb \sub B$. Here,
$F$ denotes the minimal polynomial of $\al'$ over $K$.
\end{pr}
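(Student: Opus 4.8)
The plan is to mimic the Artin--Schreier strategy from section~5.4 of \cite{V}, combining the comparison lemmas of \S4.1 with the ideal identity $\cH = \ns$ of \Cref{hn}. Fix $\be \in B^\times$. The key observation is that the operator $\tr = \prod_{i=1}^{p-1}(\si-\ze^i)$ acts on the $A$-module $\frac{1}{F'(\al')}A[\al',\be]$, so I want to control the valuations of the successive images $\bigl(\prod_{j=p-i}^{p-1}(\si-\ze^j)\bigr)(z)$ for $z$ in this module, exactly as was done in the proof of \Cref{hn} in the case $p>2$. First I would record that for $z = \frac{c}{F'(\al')}$ with $c \in A[\al',\be]$, the element $N(\si(z)-z)$ lies in $\ns = \cH$; this is the engine that produces the ``best $h$''-type bound. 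The goal is to choose $\al$ so that $c := v(\si(\al')-\al')$ is small enough that $\tr$ applied to $\frac{1}{F'(\al')}A[\al',\be]$ already lands in $B$.

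The concrete steps I would carry out: (1) Using \Cref{bis}, choose $\al \in \mathscr{S}$ with $\bigl(\si(b)-b\bigr) \subset \bigl(\si(\al')-\al'\bigr)$ for each of the finitely many generators $b$ of $A[\al',\be]$ over $A$ (the filtered-union property of the $A[\al']$, once we run the earlier lemma for each generator and take the maximum index, lets us do this uniformly). (2) Expand $\bigl(\prod_{j=p-i}^{p-1}(\si-\ze^j)\bigr)(c/F'(\al'))$ using the Leibniz-type formula \Cref{bin}, bookkeeping the contribution of each factor $(\si-\ze^j)$ as a drop in valuation by some $c_i \geq 0$, just as in \S4.2; the denominator $F'(\al')$ is the norm-denominator that makes $\ga$-type elements integral, so the relation $\ga = \frac{(\al')^{p-1}N(F'(\al'))/F'(\al')}{N(F'(\al'))}$ analogue shows $\frac{1}{F'(\al')}A[\al',\be] \subset \frac{1}{N(F'(\al'))}B_{\al'}$ for the $\si$-stable subring $B_{\al'}$. (3) Add the valuation inequalities: the $c_i$ for $i \le p-2$ are each $\geq v(\si(\al')-\al')$ by the choice in (1) together with the integrality in (2), while $c_{p-1}$ contributes $-v(y)$-type term; summing shows the total valuation of the $\tr$-image is $\geq 0$, i.e.\ the image lies in $B$. (4) Conversely the inclusion $\tr\bigl(\tfrac{1}{F'(\al')}A[\al',\be]\bigr) = \bigl(\prod(\si-\ze^i)\bigr)\bigl(\tfrac{1}{F'(\al')}A[\al',\be]\bigr)$ is the formal identity $\tr = \prod_{i=1}^{p-1}(\si-\ze^i)$, which needs no argument.

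The main obstacle I anticipate is step~(1)--(2): making precise that a \emph{single} choice of $\al$ works simultaneously for all generators of $A[\al',\be]$, and verifying that the denominator $F'(\al')$ interacts with the $\ga$-type elements in the right way so that the $c_i$ bounds ($c_i \geq v(\si(\al')-\al')$ for $i \leq p-2$) genuinely hold for elements of $\frac{1}{F'(\al')}A[\al',\be]$ and not just for $\ga$ itself. This is where the defect hypothesis is essential: it is precisely because $\js$ is \emph{not} finitely generated (\Cref{js}) that we can keep shrinking $c = v(\si(\al')-\al')$ via \Cref{bis}, whereas the norm identity $\cH=\ns$ guarantees the shrinking is not obstructed. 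Once the valuation bookkeeping is set up exactly as in the proof of \Cref{hn} for $p>2$, the computation should go through verbatim, with $\al'$ playing the role of $x$ and $A[\al',\be]$ the role of $B_b$; I would explicitly say ``the remaining estimates are identical to those in the proof of \Cref{hn}'' rather than repeat them.
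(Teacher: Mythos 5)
There is a genuine gap, and it is exactly at the point you flag as ``the main obstacle'': controlling the \emph{higher} differences $(\si-1)^t(\be)$ for $2 \leq t \leq p-1$. Your step (1) invokes \Cref{bis} once, for the generators of $A[\al',\be]$, which only bounds the first difference: it gives $(\si-1)(\be) = b_1\ga_0$ with $b_1 \in B$, but says nothing about $v\lb(\si-1)(b_1)\rb$, so you cannot conclude $v\lb(\si-1)^2(\be)\rb \geq 2v(\ga_0)$, let alone $v\lb(\si-1)^{p-1}(\al'^m\be^j)\rb \geq (p-1)v(\ga) = v(F'(\al'))$, which is what the integrality of the $\tr$-image actually requires after pulling the scalar $1/F'(\al')$ through $\prod(\si-\ze^i)$ via $\si(1/F'(\al')) = \ze/F'(\al')$. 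The paper's proof supplies the missing idea by \emph{iterating} \Cref{bis}: apply it to $\be$ to get $\al_0$ and write $(\si-1)(\be)=b_1\ga_0$; then apply it to the new element $b_1$ to get $\al_1$ and write $(\si-1)(b_1)=b_2\ga_1$; continue up to $i=p-1$, obtaining $(\si-1)^i(\be)=b_i\ga_{i-1}\cdots\ga_0$; finally take $\al$ to be the $\al_j$ whose $\ga_j$ has \emph{smallest} valuation among those appearing. Only with this single, a posteriori choice of $\al$ does one get $v\lb(\si-1)^t(\be)\rb \geq t\,v(\ga)$ for all $t$, and hence (via \Cref{bin} and $(\si-1)^k(\al')=0$ for $k>1$) the bound for all monomials $\al'^m\be^j$. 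Your step (1) is also mildly circular as stated, since the generators of $A[\al',\be]$ depend on the $\al$ being chosen, but that is repairable; the nested construction is not optional.

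Your step (2)--(3), which tries to import the $B_b$/norm-denominator bookkeeping from the proof of \Cref{hn} in \S 4.2, does not close this gap. That argument hinges on the specific element $\ga_b = b^{p-1}/g'(b)$ satisfying $\tr(\ga_b)=1$, which pins the \emph{total} valuation drop $\sum c_i = (p-1)s$ and lets one squeeze $c_{p-1}$ from the bounds on $c_1,\dots,c_{p-2}$. For a general element $\al'^m\be^j/F'(\al')$ there is no analogous trace identity, so there is no global sum to play the $c_i$ against; one genuinely needs the per-step lower bound $v\lb(\si-1)(b_t)\rb \geq v(\ga)$ for every intermediate cofactor $b_t$, and that is precisely what the iterated construction provides. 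Your final reduction (the formal identity $\tr=\prod_{i=1}^{p-1}(\si-\ze^i)$ and the replacement of $\prod(\si-\ze^{i-1})$ by $(\si-1)^{p-1}$ using $v(\ga)<v(\z)\leq v(\ze^{i-1}-1)$) is fine and matches the paper.
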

\begin{proof} For any $\al \in \mathscr{S}$ with $\al^p=h$ , we have 
\begin{enumerate}[(i)]
\item $ \si (\al')=\ga \frac{(\al \ze -1)}{\z}=\ga \frac{(\al \ze - \ze + \ze- 1)}{\z}=\ze \al'+\ga$
\item $(\si-1)
(\al')= \z \al' +\ga=\ga \al$. Note that $v(\ga) < v(\z)=v(\ga)+v(\al-1)$.
\item $F' (\al')=\prod_{i=1}^{p-1}(\al'-\si^i(\al'))=\prod_{i=1}^{p-1}\frac{\ga }{\z}(\al-\si^i(\al))=p\lb \frac{\ga \al}{\z}\rb^{p-1}=\lb \frac{hp}{\al}\rb\lb \frac{\ga}{\z}\rb^{p-1}$. 
\item Since $v(p)=v(\z^{p-1}), v(F'(\al'))=v(\ga^{p-1})$.
\item For any $1 \leq i \leq p-1, \si^i\lb \frac{1}{F'(\al')}\rb= \ze^i\lb \frac{1}{F'(\al')}\rb$
\end{enumerate}
We want to show that for a ``special" $\al$, for all $0 \leq m, j \leq p-1$,
\begin{equation} v \lb  \lb \prod_{i=1}^{p-1}(\si-\ze^i)\rb\lb\frac{1}{F'(\al')} \al'^i \be^j\rb\rb \geq 0
\end{equation}
For any $\dis x \in L, 1 \leq i \leq p-1,\\ (\si - \ze^i) \lb \frac{x}{F'(\al')}\rb=\frac{\si (x) \ze}{F'(\al')}-\frac{x\ze^i }{F'(\al')}=\frac{\ze}{F'(\al')} (\si - \ze^{i-1})(x)$ by (v) above.\\ Thus, (5.8) is equivalent to 
\begin{equation} v \lb\ze^{p-1} \lb \prod_{i=1}^{p-1}(\si-\ze^{i-1})\rb \lb \al'^m \be^j\rb\rb= v \lb \lb \prod_{i=1}^{p-1}(\si-\ze^{i-1})\rb \lb \al'^m \be^j\rb\rb \geq v(F'(\al'))=(p-1)v(\ga)
\end{equation}
Since $\prod_{i=1}^{p-1}(\si-\ze^{i-1})=\prod_{i=1}^{p-1}(\si-1-(\ze^{i-1}-1))$ and $v(\ga)< v(\z) \leq v(\ze^{i-1}-1)$, it is enough to show 
\begin{equation} v((\si-1)^{p-1}(\al'^m \be^j)) \geq
v(F'(\al'))=(p-1)v(\ga)
\end{equation}
The rest follows from \Cref{bis}, \Cref{bin} and the following argument. This is taken directly from  \cite{V}, it is worth noting that we did not use the rank $1$ assumption in these steps and therefore, the argument is valid for higher rank valuations.

\color{blue}


\begin{enumerate}[(Step 1)]
\item \textbf{Construction of the special $\al'$}\\We begin with
an $\al_0$ satisfying $\lb \si(\be)-\be \rb \sub \lb
\si(\al_0')-\al_0' \rb $. Let $(\si-1)(\be)=b_1\ga_0; b_1 \in B$.
Therefore, $(\si-1)^2(\be)=(\si-1)(b_1)\ga_0$. We don't know much
about the valuation of $(\si-1)(b_1)$, however. Let $\al_1$ be
such that $\lb (\si-1)(b_1) \rb \subset \lb (\si-1)(\al_1') \rb$.
Write $(\si-1)(b_1)=b_2 \ga_1; b_2 \in B$. Now we can write
$(\si-1)^2(\be)= b_2 \ga_1\ga_0$. Using this process, we can find
$b_i$'s and $\al_i$'s such that $(\si-1)^i(\be)= b_i
\ga_{i-1}...\ga_1\ga_0;$ where $b_i \in B$.
\\Let $\ga$ be the $\ga_j$ with smallest valuation involved in the
expression for $i=p-1$. Let $\al$ denote the corresponding
$\al_j$. We will show that this $\al$ satisfies the required
property.
\item \textbf{Proof for $\be$}\\$\lb \si(\be)-\be \rb \sub \lb
\si(\al_0')-\al_0' \rb \subset \lb \si(\al')-\al' \rb=\lb \ga \rb$,
since $v(\ga) \leq v(\ga_0)$.
Due to the choice of $\ga$, we also have $v((\si-1)^t(\be)) \geq t
v(\ga)$ for all $1 \leq t \leq p-1$. In particular, this is true
for $t=p-1$, proving the statement (5.10) for the case $i=0,
j=1$.
\item \textbf {Terms $\al'^m \be^j$}\\ For the terms of the form
$\be^j$, we use induction on $j$ and \Cref{bin}. Valuation of
each term in the expansion is at least $(p-1)v(\ga)$. In fact, by
a similar argument, $v((\si-1)^k(\be^j)) \geq k v(\ga)$ for all $1
\leq k \leq p-1$. \\ For the general terms $\al'^m \be^j$, first
note that $(\si-1)^k(\al')=(\si-1)^{k-1}(\ga)=0$ for all $k >1$.
Therefore, (again using the identity), we have\\
$(\si-1)^{p-1}(\al'^m \be^j)=\al'^m(\si-1)^{p-1}(\be^j)+
(p-1)(\si-1)(\al'^m)(\si-1)^{p-2}(\si(\be^j))$. Once again, both
these terms have valuation $\geq(p-1)v(\ga)$.
\end{enumerate}
\end{proof}

\section{Refined Swan Conductor and Proof of \Cref{comm dia} }

\begin{defn} Let $K$ be as in 1.1. For any $x \in K^{\times}$, we define elements $\dlo x \in \bH_x \ten \omk$ as described below. $\bH_x$ is the ideal of $A$ given by $\bH_x:= (x-1)A \cap A \cap \lb \frac{1}{x-1}\rb A$ if $x \neq 1$ and $\bH_1:=(0)$.
\begin{itemize}
\item $\dlo 1 := 0$.
\item If $0 \neq x \in \m_A, \dlo x:= 1 \ten \dl x$.
\item If $1 \neq x \in A^{\times}, \dlo x:= (x-1) \ten \frac{\dl (x-1)}{x}$.
\item If $ x \in K \backslash A, \dlo x:= -\dlo (1/x)$.
\end{itemize}
Furthermore, for any $0 \neq b \in A$, we define elements $\de b \in b\bH_b \ten \omk$ by $\de b:= b \dlo b$ and $\de 0:= 0$. 
\end{defn}
\begin{lm} For all $b,c \in A$ and for all $x,y \in K^{\times}$, we have 
\begin{enumerate}
\item (Log 1) $\dlo(xy)=\dlo x+\dlo y$ in $(\bH_x \cup \bH_y \cup \bH_{xy}) \ten \omk.$
\item (Additivity) $\de(b+c)=\de b+\de c$ in $(b\bH_b \cup c\bH_c \cup (b+c)\bH_{b+c}) \ten \omk.$
\item (Leibniz rule) $\de(bc)=c\de b+b\de c$ in $(b\bH_b \cup c\bH_c \cup (bc)\bH_{bc}) \ten \omk.$
\end{enumerate}
\end{lm}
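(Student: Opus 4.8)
The plan is to verify each of the three identities by reducing to cases according to where the arguments sit relative to $A$ (in $\m_A$, in $A^\times$, or outside $A$), and in each case rewriting both sides in terms of the ordinary relations of $\omk$ — additivity, Leibniz, (Log 1), (Log 2) — from the definition of logarithmic differential forms. The one genuinely new ingredient is bookkeeping: each identity is asserted in a union of fractional ideals tensored with $\omk$, so before comparing two expressions one must check that both live in the stated module and that the comparison is taking place there; I would dispose of this at the outset by noting that $\bH_x \subseteq \bH_{x}$-type containments follow from the explicit formula $\bH_x=(x-1)A\cap A\cap \tfrac{1}{x-1}A$, together with the observations $\bH_{1/x}=\bH_x$ and $b\bH_b \subseteq A$.

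For (Log 1), $\dlo(xy)=\dlo x+\dlo y$, I would first reduce to $x,y\in A$ using the rule $\dlo(1/x)=-\dlo x$ (so that a factor outside $A$ is replaced by its inverse inside $A$, at the cost of sign changes that are consistent because (Log 1) is additive). Then there are the subcases: both in $\m_A$; both in $A^\times$; one in $\m_A$ and one in $A^\times$. In the $\m_A\times\m_A$ case, $\dlo(xy)=1\ten \dl(xy)=1\ten(y\,d x\cdot x^{-1}+\dots)$ — more precisely I would use $\dl(xy)=\dl x+\dl y$ directly from (Log 1) of $\omk$, which is immediate. The $A^\times\times A^\times$ case is the substantive one: one must show $(xy-1)\ten\frac{\dl(xy-1)}{xy}=(x-1)\ten\frac{\dl(x-1)}{x}+(y-1)\ten\frac{\dl(y-1)}{y}$, and this is where I expect to actually compute, using $xy-1=(x-1)y+(y-1)$ and $(x-1)(y-1)=xy-1-(x-1)-(y-1)$ to express $\dl(xy-1)$, then Leibniz and (Log 2) in $\omk$; the tensor factor $xy-1$ in front is exactly what clears denominators so the identity makes sense integrally. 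The mixed case $x\in\m_A$, $y\in A^\times$ is similar but shorter.

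For (Additivity) $\de(b+c)=\de b+\de c$ and (Leibniz) $\de(bc)=c\,\de b+b\,\de c$, I would unwind $\de b=b\,\dlo b$ and again split into cases on $b,c$. Leibniz should reduce cleanly to (Log 1): $\de(bc)=bc\,\dlo(bc)=bc(\dlo b+\dlo c)=c(b\,\dlo b)+b(c\,\dlo c)=c\,\de b+b\,\de c$, once one checks the module in which this chain of equalities lives — so Leibniz is essentially a corollary of part (1) plus the ideal bookkeeping. Additivity is the one requiring a case analysis that cannot be folded into (Log 1): if $b,c\in\m_A$ then $\de b=b\ten\dl b=1\ten(b\,\dl b)=1\ten db$ by (Log 2), so $\de(b+c)=1\ten d(b+c)=1\ten db+1\ten dc$ by ordinary additivity in $\Om^1_A$; if $b,c\in A^\times$ one has $\de b=(b-1)\ten\frac{\dl(b-1)}{b}\cdot b = (b-1)\ten \dl(b-1)$, wait — more carefully $\de b = b\,\dlo b=(b-1)b\ten\frac{\dl(b-1)}{b}=(b-1)\ten\dl(b-1)=1\ten d(b-1)=1\ten db$ by (Log 2) applied to $b-1$ (valid only when $b-1\neq 0$), and then additivity again reduces to $\Om^1_A$; the remaining subcases ($b\in\m_A$, $c\in A^\times$, $b+c$ a unit or not; $b+c=0$) are handled by the same substitutions, with $\de 0=0$ covering the degenerate ones.

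The main obstacle, as I see it, is not any single computation but making the statements well-posed: one must be careful that when I write "$=1\ten db$" the element $db\in\Om^1_A$ is being viewed inside the correct fractional-ideal-tensor module, and that the asserted equalities hold already in the small module $(b\bH_b\cup c\bH_c\cup(b+c)\bH_{b+c})\ten\omk$ rather than only after mapping to some larger module — since tensoring with $\omk$ is not exact, an equality downstairs need not come from one upstairs. So the real work is to check, in each case, a chain of containments of ideals guaranteeing that every intermediate expression already lies in the target module, and only then invoke the $\omk$-relations. Once that is set up, every individual case is a two- or three-line manipulation of the defining relations.
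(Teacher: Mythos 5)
The paper states this lemma without proof, so there is no argument of the author's to compare yours against; I can only assess the plan on its own terms. The overall strategy -- case analysis on where $x,y,b,c$ sit relative to $\m_A$ and $A^{\times}$, reduction of the Leibniz rule to (Log 1) by multiplying by $bc$, and reduction of $x\notin A$ to $1/x\in\m_A$ -- is the natural one and is essentially right. Two structural observations make the bookkeeping you worry about tractable, and you should state them explicitly: first, the fractional ideals of a valuation ring are totally ordered, so $\bH_x\cup\bH_y\cup\bH_{xy}$ is simply the largest of the three; second, each $\bH_x$ is principal (generated by $x-1$, by $1$, or by $1/(x-1)$), hence free of rank one, so the target module is canonically isomorphic to $\omk$ by dividing out a generator $g$ of the union. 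The identity to be proved is then an honest identity in $\omk$ among elements of the form $(\text{coefficient in }A)\cdot(\text{generator of }\omk)$, obtained by writing each term as $g\ten\eta$ with $\eta\in\omk$. (Note in passing that in the unit--unit case the factor to clear is the generator of the \emph{largest} ideal, i.e.\ whichever of $x-1$, $y-1$ has smaller valuation, not $xy-1$, which generates the smallest of the three.)

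The one genuine soft spot is (Additivity). Your sketched chain $\de b=(b-1)\ten\dl(b-1)=1\ten d(b-1)=1\ten db$ moves $b-1$ across the tensor sign, which is only legitimate after mapping into $A\ten\omk=\omk$; it therefore proves the identity only in $\omk$, which is strictly weaker than the assertion in $(b\bH_b\cup c\bH_c\cup(b+c)\bH_{b+c})\ten\omk$, since $\omk$ need not be torsion-free. You flag this danger in the abstract but the computation you actually write down does not avoid it, and the fix is not just ``ordinary additivity in $\Om^1_A$'': after dividing by the generator $g$ of the union (say $g=b$ in the case $b,c\in\m_A$ with $v(b)\le v(c)$), one must verify in $\omk$ the identity
\begin{equation*}
\frac{b+c}{b}\,\dl(b+c)\;=\;\dl b+\frac{c}{b}\,\dl c ,
\end{equation*}
where only the displayed ratios lie in $A$. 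This is done by setting $e=(b+c)/b$, $f=c/b\in A$, expanding $\dl(b+c)=\dl e+\dl b$ and $\dl c=\dl f+\dl b$ via (Log 1), and collapsing $e\,\dl e-f\,\dl f=de-df=d(e-f)=d(1)=0$ via (Log 2); the coefficient of $\dl b$ cancels because $e-1-f=0$. The unit--unit case of (Log 1) requires the same kind of computation with $a=(xy-1)/(x-1)$ and $b=(y-1)/(x-1)$. Once you replace the heuristic manipulations by these generator-cleared identities, the plan goes through.
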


\subsection{Refined Swan Conductor $\rsw$ in the Defectless Case} 
We first define the refined Swan conductor for the defectless case (below) and then extend the definition  to  the defect case.

\begin{defn}Let $L|K$ be as in 1.1 and  defectless, given by best $h$. Consider the ideal $\I$ of $A$ defined by $$\I:= \left\{ x \in K \mid v(x) \geq \lb \frac{p-1}{p}\rb  v \lb \frac{\z^p}{h-1} \rb \right\}$$
We note that this definition only depends on the valuation of $h-1$ and hence, is independent of the choice of best $h$. \\The \tit{refined Swan conductor} $\rsw$ of this extension is defined to be the $A$-homomorphism  $$\dlo h: \cH \to  \omk/\I \omk$$ given by $$\dis r \mapsto \frac{r}{\z^p}\dlo h.$$  We will show in  \Cref{rsw well-def} that this definition is independent of the choice of best $h$.
 \end{defn}
\begin{rem}We can also view $\rsw$ as an element of $\lb\frac{1}{\z^p}\rb\bH_h \ten \omk$. This definition is consistent with Kato's definition in \cite{K2}. We note that $\bH_h$ is independent of choice of best $h$.\end{rem}

\begin{lm}\label{rsw well-def} Let $L|K$ be defectless, given by best $h$.  
Then the refined Swan conductor of this extension, i.e., the $A$-homomorphism  $\dlo h$, is independent of the choice of $h$. 
\end{lm}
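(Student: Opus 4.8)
The plan is to reduce everything to one valuation estimate in $\omk$. Suppose $h$ and $h'$ are both best. Since each minimizes $v\bigl(\frac{\z^p}{h-1}\bigr) = p\,v(\z) - v(h-1)$, we get $v(h-1) = v(h'-1)$; hence $\I$, the modules $\bH_h = \bH_{h'}$, the target $\omk/\I\omk$, and the principal ideal $\cH = \bigl(\frac{\z^p}{h-1}\bigr)A = \bigl(\frac{\z^p}{h'-1}\bigr)A$ are literally the same for every choice of best $h$. As $\cH$ is cyclic with generator $\frac{\z^p}{h-1}$ and the maps in question are $A$-linear, the two maps $r \mapsto \frac{r}{\z^p}\dlo h$ and $r \mapsto \frac{r}{\z^p}\dlo h'$ agree as soon as they agree on this generator, i.e. as soon as $\frac{1}{h-1}(\dlo h - \dlo h') \in \I\omk$.

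Next I would identify $h/h'$. Keeping the generator $\si : \al \mapsto \ze\al$ of $\Gal(L|K)$ fixed as in 1.1, choose $\al, \al'$ with $\al^p = h$, $\al'^p = h'$, and $\si$ acting on each by $\ze$. Then $\al/\al'$ is $\si$-fixed, so $\al/\al' \in K^\times$, and $h/h' = (\al/\al')^p = a^p$ with $a := \al/\al'$. (This normalization cannot be dropped: for $h'' = h^m$ with $m \not\equiv 1 \pmod p$ one has $\dlo h'' = m\,\dlo h$, and the corresponding maps genuinely differ; fixing the Kummer pairing $\si(\al)/\al = \ze$ is exactly what rules this out.) By the identity $\dlo(xy) = \dlo x + \dlo y$, one gets $\dlo h - \dlo h' = \dlo(h/h') = \dlo(a^p) = p\,\dlo a$, so it remains to prove $\frac{p}{h-1}\,\dlo a \in \I\omk$.

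For the final step I would unwind $\dlo a$ and use $v(p) = (p-1)v(\z)$. In each case $\dlo a = c_a \ten \eta_a$ with $\eta_a \in \omk$ and $c_a \in A$ satisfying: $c_a = 0$ if $a = 1$; $v(c_a) = 0$ if $a \in \m_A$ or $a \in K \setminus A$; and $c_a = a - 1$, so $v(c_a) = v(a-1) \ge 0$, if $a \in A^\times \setminus \{1\}$. Then the coefficient $\frac{p\,c_a}{h-1} \in A$ of $\frac{p}{h-1}\dlo a$ has valuation $(p-1)v(\z) + v(c_a) - v(h-1)$, and since $\frac{p-1}{p}v\bigl(\frac{\z^p}{h-1}\bigr) = (p-1)v(\z) - \frac{p-1}{p}v(h-1)$, what I must show is simply $v(c_a) \ge \frac{1}{p}v(h-1)$. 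If $v(h-1) = 0$ this is clear. If $v(h-1) = m_0 > 0$, then $h, h' \in A^\times$ with residue $1$, so $\ol{a}^p = \ol{h}/\ol{h'} = 1$ in $k$, forcing $\ol{a} = 1$; hence $a \in A^\times$, say $a = 1 + \de$ with $\de \in \m_A$ (the case $a = 1$ being trivial). Expanding, $h'(1+\de)^p - 1 = (h'-1) + h'\bigl(\sum_{i=1}^{p-1}\binom{p}{i}\de^i + \de^p\bigr)$, and the bracketed term has valuation $\ge \min\bigl((p-1)v(\z) + v(\de),\, p\,v(\de)\bigr)$; compatibility with $v(h-1) = v(h'-1) = m_0$ then forces $p\,v(\de) \ge m_0$, i.e. $v(c_a) = v(\de) \ge \frac{1}{p}v(h-1)$. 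So $\frac{p\,c_a}{h-1} \in \I$, giving $\frac{p}{h-1}\dlo a \in \I\omk$, and the two maps coincide.

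The main obstacle is the case $v(h-1) > 0$ of the last paragraph: the real work is extracting $v(a-1) \ge \frac{1}{p}v(h-1)$ from nothing but the hypothesis that $h$ and $h'$ are both best, and one should note that this binomial-expansion argument uses only order comparisons of valuations, so it goes through for valuations of arbitrary rank. A secondary point to keep in mind is the normalization $\si(\al) = \ze\al$ of the second paragraph, without which the lemma is false as stated.
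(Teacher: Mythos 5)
Your proof is correct and follows essentially the same route as the paper: reduce to the generator $\frac{\z^p}{h-1}$ of $\cH$, write the second best element as $a^ph$ so that the difference of the two maps is $\frac{p}{h-1}\dlo a$, and extract the key inequality $v(a-1)\ge\frac{1}{p}v(h-1)$ (equivalently $v(c_a)\ge\frac{1}{p}v(h-1)$) from the fact that both elements are best. Your explicit justification that two best elements differ by a $p$-th power (via the fixed normalization $\si(\al)=\ze\al$, ruling out $h^m a^p$ with $m\not\equiv 1 \bmod p$) and your handling of the case $a\notin A$ through $\dlo a=-\dlo(1/a)$ simply make precise what the paper's proof assumes implicitly when it writes the two best elements as $h$ and $a^ph$ with $a\in A$.
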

\begin{proof} Let $h$ and $a^ph$ be best; $0,1 \neq a \in A$. Then the difference between the two $A$-homomorphisms is given by $\dis \dlo a^ph- \dlo h=\dlo a^p=p \dlo a$. 
For an element $r$ of $\cH= \lb\frac{\z^p}{h-1}\rb=\lb\frac{\z^p}{a^ph-1}\rb$, we have

$$
 (p \dlo a) (r) =\begin{cases}
               \frac{r  }{\z^p}p \ten \dl a; \hfill \dis 0 \neq a \in \m_A\\
               \frac{r }{\z^p}p(a-1)  \ten \frac{\dl (a-1)}{a}; \hfill \dis 1 \neq a \in A^{\times} \\
               \end{cases}
$$
We wish to show that $ (p \dlo a) (r)$ belongs to $\I \omk$. Considering the formulas above and observing that $\frac{r  }{\z^p}p$ has the same valuation as $\frac{r  }{\z^p}p(a-1)$ when $0 \neq a \in \m_A$, it is enough to show that $\frac{r  }{\z^p}p(a-1) \in \I$ for $0,1 \neq a \in A$. 
$$ v\lb \frac{r  }{\z^p}p(a-1) \rb \geq v\lb \frac{1}{h-1}p(a-1)\rb = v(a-1)+v(p) - v(h-1)=v(a-1)+ (p-1)v(\z)-v(h-1)$$
Thus, it is enough to prove the inequality $$ v(a-1)+ (p-1)v(\z)-v(h-1)\geq \lb \frac{p-1}{p}\rb v\lb \frac{\z^p}{h-1}\rb=\frac{1}{p}v(h-1)+(p-1)v(\z) - v(h-1)$$
That is,  \begin{equation}\label{ah} v(a-1) \geq \frac{1}{p}v(h-1) \end{equation}

\noindent When $h-1 \in A^{\times},$ this follows simply from $v(a^p-1)=pv(a-1) \geq 0=v(h-1)$, since $a \in A$. 

\noindent When $h-1 \in \m_A, h \in A^{\times}$ and hence,  \Cref{ah} follows from
$$v(h-1)=v(a^ph-1)=v((a^p-1)h+h-1) \geq \min(v(a^p-1), v(h-1))$$
where the first equality is a consequence of $h$ and $a^ph$ both being best.

\end{proof}

\begin{cor} Let $L|K$ and $h$ be as above. Then the ideal $\I'$ of $A$ generated  by $$\left\{ p \lb \frac{a-1}{h-1}\rb  \mid a \in A, a^ph~ is~ best \right\}$$ is contained in the ideal $\I$. \end{cor}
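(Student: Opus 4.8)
The plan is to unwind the definition of $\I'$ and show that each of its generators lies in $\I$, which suffices since $\I$ is an ideal. Fix $a \in A$ with $a^p h$ best, and note we may assume $a \neq 0$; if $a$ is a unit congruent to $1$ the generator is $p(a-1)/(h-1)$, and if $a \in \m_A$ the generator has valuation $v(p) - v(h-1) \geq v(p(a-1)/(h-1))$, so in either case it is enough to bound $v\bigl(p(a-1)/(h-1)\bigr)$ from below by $\lb \frac{p-1}{p}\rb v\lb \frac{\z^p}{h-1}\rb$.

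The key step is to invoke the inequality already established in the proof of \Cref{rsw well-def}, namely \Cref{ah}: since both $h$ and $a^p h$ are best, $v(a-1) \geq \frac{1}{p}v(h-1)$. Using $v(p) = (p-1)v(\z)$, I then compute
$$ v\lb p \frac{a-1}{h-1}\rb = v(a-1) + (p-1)v(\z) - v(h-1) \geq \frac{1}{p}v(h-1) + (p-1)v(\z) - v(h-1), $$
and the right-hand side is exactly $\lb \frac{p-1}{p}\rb v\lb \frac{\z^p}{h-1}\rb$ after expanding $v(\z^p/(h-1)) = pv(\z) - v(h-1)$. Hence $p(a-1)/(h-1) \in \I$, and therefore so is every generator of $\I'$, giving $\I' \subseteq \I$.

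I expect no real obstacle here: the corollary is essentially a repackaging of the computation inside \Cref{rsw well-def}, where it was shown that $\frac{r}{\z^p}p(a-1) \in \I$ for the specific element $r = \z^p/(h-1)$ generating $\cH$. The only points requiring a word of care are the reduction from general $a \in A$ to the two cases $a \in \m_A$ and $a \in A^\times \setminus\{1\}$ (handled by comparing valuations as above), and making sure the identity $v(p) = (p-1)v(\z)$ is cited, which is immediate from $e' = v(p)/(p-1) = v(\z)$ as recorded in \Cref{cdvr classifn}.
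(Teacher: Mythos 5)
Your proposal is correct and is essentially the paper's own argument: the corollary is exactly the computation inside the proof of \Cref{rsw well-def}, reducing $v\lb p\frac{a-1}{h-1}\rb \geq \lb\frac{p-1}{p}\rb v\lb\frac{\z^p}{h-1}\rb$ via $v(p)=(p-1)v(\z)$ to the inequality \Cref{ah}, $v(a-1)\geq \frac{1}{p}v(h-1)$, already established there for $h$ and $a^ph$ both best. Your opening case split on $a$ (unit versus $a\in\m_A$) is unnecessary but harmless, since the same valuation bound covers every generator.
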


\subsection{Proof of \Cref{comm dia}}
\begin{lm}  Let $L|K$ be as in 1.1. Then 
\begin{enumerate}[(a)]
\item The norm map $N_{L|K}=N$ induces the surjective ring homomorphism $N: B \to A/(\is \cap A )$.
\item The map $\varphi_{\si}: \oml/\js \oml \to \js/\js^2$ given by $\dl x \mapsto \frac{\si(x)}{x}-1$ is a surjective $B$-module homomorphism.
\end{enumerate}\end{lm}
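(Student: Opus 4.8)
The plan is to treat part (b) first --- it is purely a presentation-by-generators-and-relations argument --- and then to deduce part (a) from the multiplicativity of $N$ together with a congruence modulo $\is$.

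For (b): recall that $\oml=\omega^1_{B|A}$ is presented as a $B$-module by the generators $db$ $(b\in B)$ and $\dl x$ $(x\in L^\times)$, subject to the logarithmic differential relations (Additivity, Leibniz, Log~1, Log~2) together with $da=0$ for $a\in A$ and $\dl x=0$ for $x\in K^\times$. I would define a $B$-module map $\oml\to\js/\js^2$ on generators by $\dl x\mapsto \tfrac{\si(x)}{x}-1\bmod\js^2$ and $db\mapsto \si(b)-b\bmod\js^2$, the latter being forced by Log~2 since $db=b\,\dl b$ must go to $b\bigl(\tfrac{\si(b)}{b}-1\bigr)=\si(b)-b$. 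The substance of the argument is checking that each relation survives modulo $\js^2$: writing $u=\tfrac{\si(x)}{x}-1$ and $w=\tfrac{\si(y)}{y}-1$ in $\js$, Log~1 is $\tfrac{\si(xy)}{xy}-1=u+w+uw\equiv u+w\pmod{\js^2}$; Additivity is an identity already in $B$; Leibniz reduces to
$$\si(bc)-bc-\bigl(c(\si(b)-b)+b(\si(c)-c)\bigr)=(\si(b)-b)(\si(c)-c)\in\is^2\subset\js^2,$$
where I use $\is\subset\js$ (because $\si(b)-b=b\bigl(\tfrac{\si(b)}{b}-1\bigr)\in\js$ for $b\in B$); and the remaining relations hold because $\si$ fixes $K$ pointwise, so $da\mapsto\si(a)-a=0$ and $\dl x\mapsto\tfrac{\si(x)}{x}-1=0$ for $x\in K^\times$. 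This produces a well-defined $B$-module homomorphism $\oml\to\js/\js^2$; since $\js$ annihilates $\js/\js^2$ it factors through $\oml/\js\oml$, giving $\varphi_\si$, and surjectivity is immediate because $\js$ is by definition generated by the elements $\tfrac{\si(x)}{x}-1=\varphi_\si(\dl x)$.

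For (a): multiplicativity of $N$, the identity $N(1)=1$, and $N(B)\subset A$ (integral closedness of $A$) are routine, so the only issue is additivity modulo $\is\cap A$. First I would record that $N(b)\equiv b^p\pmod{\is}$ for $b\in B$: indeed $\si^i(b)-b=(\si-1)\bigl((1+\si+\dots+\si^{i-1})(b)\bigr)\in\is$, whence $N(b)=\prod_{i=0}^{p-1}\si^i(b)\equiv b^p$. Consequently
$$N(b+c)-N(b)-N(c)\equiv(b+c)^p-b^p-c^p=\sum_{k=1}^{p-1}\binom{p}{k}\,b^kc^{p-k}\pmod{\is},$$
and the right-hand side lies in $pB$. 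The step I expect to be the main obstacle is to upgrade this to membership in $\is$; I would reduce it to the containment $pB\subset\is$, which is automatic in the defect case (there $\is=\js$ and $v(p)\ge j(\si)$) and which in the wild and ferocious cases I would derive from $v(p)=(p-1)v(\z)$ together with the explicit form of the extension given by \Cref{mu} and \Cref{best}, arguing case by case along the classification in \Cref{best} if necessary. Granting $pB\subset\is$, one has $N(b+c)-N(b)-N(c)\in pB\cap A=pA\subset\is\cap A$, so $N$ is additive modulo $\is\cap A$ and therefore descends to a ring homomorphism $B\to A/(\is\cap A)$. For surjectivity I would work with generators of $A$ modulo $\is\cap A$, using $N(\al)=(-1)^{p-1}h$ and $N(\al-1)=(-1)^{p-1}(h-1)$ (as computed in the proof of \Cref{hn}) together with norms of tame units, which together exhaust $A$ modulo $\is\cap A$, just as in the Artin--Schreier treatment of \cite{V}.

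In short, part (b) and the bulk of part (a) are formal manipulations with the defining relations of the logarithmic differential modules and with $\si^i-1=(\si-1)(1+\si+\dots+\si^{i-1})$; the one place where genuine work is required is the ideal containment $pB\subset\is$ underlying the near-additivity of $N$ modulo $\is\cap A$.
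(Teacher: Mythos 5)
Your part (b) is correct and is in substance the argument the paper imports from Lemma 6.1 of \cite{V}: define the map on the generators $db$, $\dl x$ of $\om^1_B$, check the relations modulo $\js^2$ (Log~1 via $uw\in\js^2$, Leibniz via $(\si(b)-b)(\si(c)-c)\in\is^2\subset\js^2$), note that the image of $B\ten_A\om^1_A$ dies because $\si$ fixes $K$, and observe that the generators $\frac{\si(x)}{x}-1$ of $\js$ are hit. Nothing to object to there.

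Part (a) has a genuine gap, and it sits exactly where you flagged the ``main obstacle.'' Your reduction makes additivity of $N$ modulo $\is\cap A$ rest on the containment $pB\subset\is$, but that containment is false in the wild case, and the deferred case-by-case argument via \Cref{mu} and \Cref{best} cannot repair it. Concretely, take $p=2$, $K=\Q_2$ (so $\ze=-1\in K$), $h=2$, $L=K(\al)$ with $\al^2=2$: for $b=x+y\al$ one has $\si(b)-b=-2y\al$, so $\is=(2\al)B$ and $2\notin\is$ (that would require $1/\al\in B$). Moreover the quantity you need to control really does escape: here $\is\cap A=4\Z_2$ while $N(1+1)-N(1)-N(1)=4-2=2\notin 4\Z_2$, so the chain ``$N(b)\equiv b^p \pmod{\is}$, cross terms lie in $pB$, $pB\subset\is$'' cannot be completed. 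The same failure of $pB\subset\is$ occurs for $p>2$ in wild extensions over higher-rank value groups, where no $p$-th power multiple of $h$ brings $v(\al)$ below $(p-2)v(\z)$; your parenthetical ``automatic in the defect case'' is fine, and the ferocious case is easy, but the wild case is not covered by your plan. Note also that the paper supplies no independent argument here: it cites Lemma 1.6 of \cite{V}, an equal-characteristic statement where additivity is free from $(b+c)^p=b^p+c^p$; in mixed characteristic the binomial cross terms are precisely the issue, so any correct treatment must handle the wild case by a different mechanism (or with a more careful formulation of what ``induced by $N$'' means) rather than by the containment you assume. Separately, your surjectivity argument for (a) is only a gesture --- it is not shown that norms of $\al$, $\al-1$ and tame units exhaust $A$ modulo $\is\cap A$ --- but the additivity step is the substantive gap.
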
\begin{proof} See Lemmas 1.6 and 6.1 of \cite{V}.\end{proof}

\subsubsection{Defectess Case} Let $L|K$ be as in 1.1 and defectless, given by best $h$. 
\begin{lm}\label{hii}When $L|K$ is defectless, $\cH \subset \I \subset \is \cap A$    
\end{lm}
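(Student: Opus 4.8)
The statement $\cH \subset \I \subset \is \cap A$ has three ingredients, and I would attack them separately. The inclusion $\cH \subset \I$ is essentially immediate from the definitions: an element $r \in \cH$ has $v(r) \geq v(\z^p/(h-1))$ (since $h$ is best, $\cH$ is the principal ideal $(\z^p/(h-1))$), whereas $\I$ consists of all $x$ with $v(x) \geq \bigl(\tfrac{p-1}{p}\bigr) v(\z^p/(h-1))$. So it suffices to check $v(\z^p/(h-1)) \geq \bigl(\tfrac{p-1}{p}\bigr)v(\z^p/(h-1))$, i.e. $\tfrac{1}{p}v(\z^p/(h-1)) \geq 0$, which holds because $\cH$ is an integral ideal by \Cref{H}. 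That disposes of the first inclusion.

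\textbf{The inclusion $\I \subset \is \cap A$.} This is the substantive part. Since we are in the defectless case, $\is = \js$ (I would first record why: in the defectless case $\js$ is principal by \Cref{js}, generated by some $\si(x_0)-x_0$ with $x_0$ of minimal valuation among the $\si(b)-b$; and comparing with the generator of $\is$, the two coincide — this is the analogue of $i(\si)=j(\si)$ in the classical wild/ferocious case, and is presumably already available from the discussion in section 3 or from \cite{V}). Granting $\is = \js$, I need $\I \subset \js \cap A$, equivalently every element of $A$ of valuation $\geq \bigl(\tfrac{p-1}{p}\bigr)v(\z^p/(h-1))$ lies in $\js \cap A$. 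The plan is to exhibit an explicit generator of $\js$ whose valuation is $\leq \bigl(\tfrac{p-1}{p}\bigr)v(\z^p/(h-1))$, using the construction from the proof of \Cref{hn} (case $p>2$) or, more cleanly, the element $\al' = \ga(\al-1)/\z$ from section 5: one has $v(\si(\al')-\al') = v(\al\ga) = v(\z/(\al-1)) = \tfrac{1}{p}v(\z^p/(h-1))$. But I want the generator of $\js$ intersected with $A$, so I would instead use that $\js \cap A \supset \ns$ (or rather use \Cref{hn}: $\cH = \ns$, and in the defectless case $\ns = N(\js)A$ is principal generated by $N(\si(\al')-\al')$, which has valuation $v(\z^p/(h-1))$). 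Hmm — that only gives valuation $v(\z^p/(h-1))$, not the smaller $\bigl(\tfrac{p-1}{p}\bigr)$ bound, so $\ns$ alone is not enough; I genuinely need elements of $\js \cap A$ of the smaller valuation. The right move: take $x \in L^\times$ with $v(\si(x)/x - 1) = \tfrac1p v(\z^p/(h-1))$ (the generator of $\js$), and observe $\si(x)-x \in \is \cap A$ only after multiplying; more precisely I should look for $c \in A$ with $v(c)$ as small as $\bigl(\tfrac{p-1}{p}\bigr)v(\z^p/(h-1))$ lying in $\js$. Since $\js^{?}$... — let me reconsider: $\js$ is principal with generator $\pi_\js$ of valuation $\tfrac1p v(\z^p/(h-1))$, so $\js^{p-1}$ is generated by an element of valuation $\bigl(\tfrac{p-1}{p}\bigr)v(\z^p/(h-1))$, and $\js^{p-1} \cap A$ contains all $A$-elements of valuation $\geq \bigl(\tfrac{p-1}{p}\bigr)v(\z^p/(h-1))$ that are actually $\js^{p-1}$-multiples. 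So the cleanest route is: show $\I \subset \js^{p-1} \cap A \subset \js \cap A = \is \cap A$, the middle containment being trivial and the first being a valuation count combined with the fact that $A$ is a valuation ring (so the ideal of elements of valuation $\geq$ a given bound is exactly the corresponding fractional ideal, which for $\js^{p-1}\cap A$ works because $\js^{p-1}$ is principal and its generator's valuation is in $\Ga_K$).

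\textbf{Main obstacle.} The delicate point is verifying that $\js^{p-1} \cap A$ really equals $\{x \in A : v(x) \geq (p-1)v(\pi_\js)\}$, i.e. that the generator of $\js^{p-1}$ can be taken in $K$ (or that $\js^{p-1}B \cap A$ is the expected fractional ideal of $A$), and more importantly that $v(\pi_\js) = \tfrac1p v(\z^p/(h-1))$ exactly — this requires knowing that best $h$ realizes the minimum and linking it to the generator of $\js$, which should follow from \Cref{js}, \Cref{best}, and the explicit computations of $j(\si)$ in section 3.1 (cases (ii)–(v)), where in each case $j(\si) = \tfrac1p v(\z^p/(h-1))$ can be read off. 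So the honest plan is: (1) $\cH \subset \I$ by the valuation inequality and \Cref{H}; (2) identify $v(\js) $ precisely via \Cref{best} and the case analysis, getting $v(\js) = \tfrac1p v(\cH)$; (3) deduce $\I \subset \js^{p-1}\cap A$ by a valuation count in the valuation ring $A$; (4) conclude via $\js^{p-1} \subset \js = \is$. I expect step (2) — pinning down that the generator of $\js$ has valuation exactly $\tfrac1p v(\z^p/(h-1))$ uniformly across the ferocious and wild subcases — to be where the real work lies, though it is all effectively contained in section 3.1.
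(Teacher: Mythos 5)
Your step (1) ($\cH \subset \I$) and the valuation counts in steps (2)--(3) are fine and agree with the opening of the paper's proof, but the load-bearing claim in your plan --- that $\is=\js$ in the defectless case, so that $\js^{p-1}\subset\js=\is$ and $\js\cap A=\is\cap A$ --- is not true. The identity $\is=\js$ is a feature of the \emph{defect} case (the paper invokes it in 6.2.2 precisely ``due to the defect''); in the defectless \emph{wild} case the classical relation is $i(\si)=j(\si)+w(\pi_L)>j(\si)$, not $i(\si)=j(\si)$ (equality holds only in the ferocious case). Concretely, for $K=\Q_p(\ze_p)$, $L=K(p^{1/p})$ one has $\js=(\z)B$ but $\is=(\z p^{1/p})B$, and even the weaker statement you actually need fails: $\js\cap A=\z A$ while $\is\cap A=\z^2A$. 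Indeed the paper's own proof of this lemma says explicitly, in the $p=2$ analysis, ``Since $\is\neq\js$ in general, we use a different strategy.'' So the final link of your chain $\I\subset\js^{p-1}\cap A\subset\js\cap A=\is\cap A$ is unjustified and false as stated.

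What is actually needed is only that $\is$ contain \emph{some} element of valuation $\leq(p-1)\nu$, where $\nu=\frac1p v\lb\frac{\z^p}{h-1}\rb$; since $B$ is a valuation ring, all of $\I$ then follows. For $p>2$ the paper gets this by exhibiting explicit elements: $\z\al=(\si-1)(\al)\in\is$ of valuation $<2w(\z)\leq(p-1)\nu$ when $h\in\m_A$, and $(\si-1)\lb\frac{\z}{\al-1}\rb\in\is$ of valuation $2\nu\leq(p-1)\nu$ when $h\in A^\times$; this is, in effect, the corrected version of your ``$\js^{p-1}\subset\is$'' step, and for $p>2$ your skeleton could be repaired along these lines (your side worry about whether the generator of $\js^{p-1}$ lies in $K$ is immaterial: an ideal of a valuation ring contains every element of $B$ of valuation at least that of any of its elements). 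But for $p=2$ your argument collapses entirely: $\js^{p-1}=\js$, and $\js\not\subset\is$ in the wild case, so the chain proves nothing --- and this is exactly where the paper does the real work, splitting into the subcases $\al-1\in B^\times$, $\al-1\in\m_B$ with $e_{L|K}=1$, and $\al-1\in\m_B$ with $e_{L|K}=2$, the last requiring a value-group gap argument (either some $b\in A$ with $s<w(b)<2e'$ yields a small element $\frac{2\al}{b}\in\is$, or the absence of such $b$ forces every element of $\I$ to have valuation $\geq 2\nu=w\lb(\si-1)\lb\frac{2}{\al-1}\rb\rb$). Your proposal does not address $p=2$ at all, so the hardest part of the lemma is missing.
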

\begin{proof}  Let $ p w\lb \frac{\z}{\al -1}\rb =p\nu=w\lb \frac{\z^p}{h-1}\rb $. By definition, $\I =\left\{ x \in K \mid w(x) \geq (p-1) \nu \right\}$ and hence, contains $\cH$. Now we need to show that $\is$ contains all the elements $x$ of $L$ satisfying $w(x) \geq (p-1) \nu$. 
Using  the characterization in \Cref{best}, we see that in Case (i), $\nu=0$ and the result follows trivially. Thus, we may assume that $L|K$ is either wild or ferocious.
\\\\Without loss of generality, we may further assume $0 \leq v(h) < p v(\z)$.
We will divide the proof  in two cases:
\begin{itemize}
\item Case $p>2$: If $h \in \m_A, \nu=w(\z)$. Since $\al \in B, \z \al = (\si - 1) (\al) \in \is$. By our assumption on $h$, $w(\z \al) < 2w(\z) \leq (p-1)w(\z)$ and hence, $\I \subset \is \cap A.$\\ When $h \in A^{\times}, $ we consider the element $b$ of $\is$ given by $$b=(\si - 1)\lb \frac{\z}{\al -1} \rb = \lb \frac{-\z^2}{(\al -1)(\ze \al -1)} \rb.$$
Since $w(b)=2\nu \leq (p-1)\nu, \I \subset \is \cap A.$

\item Case $p=2$: In this case, $(p-1)=1$ and $\z=-2$. 
By \Cref{mu}, the ideal $\is$ of $B$ is generated by the elements $(\si-1)(x\mu)=x (\si-1)(\mu); x \in K, x\mu \in B$ where $\mu$ is either $\al$ or $\al-1$.
\\ Since $(\si-1)(\al-1)=(\si-1)(\al)=-2\al,$ $\is$ is generated by $(2\al )K \cap B.$\\
When $\al-1 \in B^{\times}, w\lb \frac{2}{\al-1} \rb =w(2)>w\lb \frac{2}{\al} \rb= w\lb \frac{2 \al}{h} \rb \geq 0.$ Since $\frac{2 \al}{h} \in \is, \I \subset \is \cap A$\\
If $\al - 1 \in \m_B$ and $e_{L|K}=1, \is = \js  = \lb \frac{2}{\al-1} \rb B$ and therefore, $\I \subset \is \cap A$.\\ 
The last remaining case is when $\al - 1 \in \m_B$ and $e_{L|K}=2$. As in the preceding case, $\I = \js \cap A$. Since $\is \neq \js$ in general, we use a different strategy.

Let $w(2)=2e', 0<w(h-1)=2s<4e',  w(\al-1)=s, \nu=2e'-s$ \\ 
If there exists an element $ b \in A$ such that $s < w(b) <2e',$ then $\frac{2 \al}{b} \in \is$ and $w\lb \frac{2\al}{b}\rb < \nu$. Now suppose that there is no such element. In particular, $2s \geq 2e'$.
Any element $x$ of $\I$ must satisfy $w(x)>2e'-s$. If there is an $x \in \I$ such that $2e'> w(x)>2e'-s$, then $2s> w\lb\frac{x (h-1)}{2} \rb >s$. By the assumption above, we must have $w\lb\frac{x (h-1)}{2} \rb \geq 2e'$ and hence, $w(x) \geq 2e'+2e'-2s=2 \nu$.\\

Since $$w\lb(\si -1)\lb \frac{2)}{\al-1} \rb \rb=w \lb \frac{-4\al}{h-1} \rb =4e'-2s=2 \nu,$$
we have $\I \subset \is$.

\end{itemize}
\end{proof}

\begin{proof}[Proof of \Cref{comm dia} in the defectless case]

We will use the characterization in \Cref{best}.
\begin{itemize}
\item  Case (i): $h=1+c\z^p; \ol{c} \nin \{ x^p-x \mid x \in K \}$. As mentioned earlier, $L|K$ is unramified in this case and the result is trivially true.
\item  Case (ii): In this case, $h \in \m_A$ and $p \nmid v(h)$. Consequently, the $B$-module $\oml$
 is generated by $\dl \al$ and the diagram is given by the following maps:
\begin{center}$
\begin{tikzcd}[column sep=large]
b\dl \al \arrow{r}{\varphi_\si}
\arrow{d}[swap]{\dn}
& b\z  \arrow{d}{\overline{N}}\\
N(b)\dl N(\al)= N(b)\dl h   & N(b)\z^p \arrow{l}{\rsw}
\end{tikzcd}$
\end{center}
For $b \in B$, $b \dl (\al) \in Ker(\varphi_\si) \iff b\z \in (\z^2)$. Hence, $\varphi_\si$ is an isomorphism. Since $\cH \subset \I \subset (\is \cap A)$,  the map $\rsw$ is well-defined, independent of the choice of $h$. 
\item Cases (iii)-(v): In these cases, $1 \neq h \in A^{\times}, \al \in B^{\times}$ and  the $B$-module $\oml$
 is generated by $\dl (\al-1)$. The diagram is given by the following maps:
\begin{center}$
\begin{tikzcd}[column sep=large]
b\dl (\al-1) \arrow{r}{\varphi_\si}
\arrow{d}[swap]{\dn}
& b \lb \frac{\z \al}{\al-1}\rb \arrow{d}{\overline{N}}\\
N(b)\dl N(\al-1)  & N(b)\lb \frac{\z^p h}{h-1}\rb \arrow{l}{\rsw}
\end{tikzcd}$
\end{center}
It is easy to verify that $\varphi_\si$ is an isomorphism.\\ Since $\cH \subset \I \subset (\is \cap A)$,  the map $\rsw$ is well-defined, independent of the choice of $h$. By definition, $$\rsw\lb N(b) \lb \frac{\z^p h}{h-1}\rb \rb:=N(b)\lb \frac{h}{h-1}\rb (h-1) \lb \frac{\dl (h-1)}{h}\rb=N(b)\dl (h-1)=N(b)\dl N(\al-1).$$ The rest follows.
\end{itemize}\end{proof}

\subsubsection{Preparation for the defect case} Let $L|K$ be a defect extension as in 1.1. Recall that $B= \cup_{\al \in
\mathscr{S}} A[\al']$ is a filtered union, where $\dis \mathscr{S}= \{ \al \in L
\mid \al^p=h \in A^{\times}, h-1 
\in \m_A \}$ and 
for each $\al \in \mathscr{S}$, we have $\ga_{\al}=\ga \in A$ such that $\al':= \ga \frac{(\al-1)}{\z} \in B^{\times}$. Since there is defect, we consider $\OL$ and $\OK$ instead of $\oml$ and $\omk$, respectively. Fix some $\al_0 \in \mathscr{S}$ as the starting point. Let $v(\al_0-1)-v(\z)=-v(\ga_0)=-\mu <0$. We may only consider the subset $\mathscr{S}_0:= \{ \al \in \mathscr{S} \mid v(\al -1) >v(\al_0 -1)\}$ of $\mathscr{S}$.
\begin{lm}\label{alpha level} Let $\al \in \mathscr{S}_0, \al^p=h_\al, \al_0^p=h_0$. Let $F_{\al}(X)$ and $F_0(X)$ denote the minimal polynomials over $K$ of $\al'$ and $\al'_0$, respectively. Consider 
$c_\al:=F'_{\al'}(\al')$, $c_0:=F'_{0}(\al'_0)$ and the ratio
$\al_0\ga_0/\al\ga_\al =: a_\al \in A$. Then we have the following
commutative diagram:

\begin{center} $\dis
\begin{tikzcd}[column sep=large]
\Om^1_{A[\al_0']|A} \arrow{r}{\cong} \arrow[hook]{d}{\rho_\al}
&A[\al_0']/(c_0) \arrow{r}{\cong} \arrow[hook]{d}{\iota_\al}
&(\frac{1}{a_0})A[\al_0']/(\frac{c_0}{a_0})A[\al_0']
\arrow[hook]{d}{j_\al}\\
\Om^1_{A[\al']|A} \arrow{r}{\cong} &A[\al']/(c_\al)
\arrow{r}{\cong}
&(\frac{1}{a_\al})A[\al']/(\frac{c_\al}{a_\al})A[\al']
\end{tikzcd}$ 
\end{center}

Here, the isomorphisms are given by $b_0 d\al'_0 \mapsto b_0 \mapsto b_0/a_0$ and  $b d\al' \mapsto b \mapsto b/a_\al$ for all $b_0 \in A[\al_0']$ and $b \in A[\al']$. The vertical maps are given by multiplication by $a_\al$.
\end{lm}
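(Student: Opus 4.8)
The plan is to build the diagram column by column, starting from the leftmost vertical map and transporting it across two horizontal isomorphisms. First I would establish the left column. For any $\al \in \mathscr{S}_0$, the ring $A[\al']$ is monogenic over $A$ with minimal polynomial $F_{\al}$, so by the standard presentation of Kähler differentials of a monogenic extension, $\Om^1_{A[\al']|A}$ is the cyclic $A[\al']$-module generated by $d\al'$ with the single relation $F'_{\al}(\al')\,d\al' = 0$; thus $\Om^1_{A[\al']|A} \cong A[\al']/(c_\al)$ via $b\,d\al' \mapsto b$, and likewise for $\al_0$. The inclusion $A[\al_0'] \hookrightarrow A[\al']$ (which is an inclusion by \Cref{fil}, since $\al_0 \in \mathscr{S}_0$ and these rings form a filtered chain) induces the natural map $\rho_\al$ on differentials; under the two isomorphisms this is the map $A[\al_0']/(c_0) \to A[\al']/(c_\al)$ sending the class of $b_0$ to the class of $a_\al b_0$. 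The content here is to check that $\rho_\al(d\al_0') = a_\al\, d\al'$ inside $\Om^1_{A[\al']|A}$: using formula (ii) from the proof of \Cref{btap}, $(\si-1)(\al') = \ga_\al \al$, and more directly one computes $d\al_0' = \ga_0 d\!\left(\tfrac{\al_0-1}{\z}\right)$; since $\al_0$ and $\al$ differ by a unit $u \in A^\times$ ($\al_0 = u\al$ by \Cref{conj}), $d\al_0' = \tfrac{\ga_0 u}{\z} d\al + (\text{terms in } d(\text{units of } A))$, and the latter vanish in the relative module $\Om^1_{A[\al']|A}$. Tracking constants gives exactly the factor $a_\al = \al_0\ga_0/(\al\ga_\al)$, which lies in $A$ because $v(\al_0\ga_0) = v(\z) = v(\al\ga_\al)$ by the defining property $\al_0', \al' \in B^\times$, i.e. $v(\ga) = v\!\left(\tfrac{\z}{\al-1}\right)$.

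Next I would handle the middle square. The map $\iota_\al\colon A[\al_0']/(c_0) \to A[\al']/(c_\al)$ is declared to be multiplication by $a_\al$; this is well-defined precisely because $a_\al c_0$ maps into $(c_\al)$ in $A[\al']$, which I would verify from the explicit formula $c_\al = F'_{\al}(\al') = \left(\tfrac{hp}{\al}\right)\left(\tfrac{\ga_\al}{\z}\right)^{p-1}$ (formula (iii) of the proof of \Cref{btap}) applied to both $\al$ and $\al_0$: the ratio $c_0/c_\al$ is, up to a unit of $A[\al']$, equal to $(\ga_0/\ga_\al)^{p-1}(\al/\al_0) = a_\al^{p-1} \cdot (\text{unit})$, so $a_\al c_0 \in (c_\al)$ with the quotient divisible appropriately — in fact $a_\al c_0 = (\text{unit})\, a_\al^p \, c_\al / a_\al^{p-1}$, and one checks the integrality. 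The left square then commutes by construction since both composites are multiplication by $a_\al$ after the identification of the top and bottom rows with their respective cyclic modules.

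For the right square I would introduce the localization: $a_0 := a_{\al_0}$ is handled as the base case (with $\al = \al_0$ giving $a_{\al_0}$ a reference value, which we may normalize to relate $\al_0\ga_0/\z$), and the isomorphism $A[\al_0']/(c_0) \xrightarrow{\cong} \left(\tfrac{1}{a_0}\right)A[\al_0']/\left(\tfrac{c_0}{a_0}\right)A[\al_0']$ is simply multiplication by $1/a_0$, which is an isomorphism of $A[\al_0']$-modules since $a_0 \in A[\al_0']$ (indeed $a_0 \in A^\times$ after the normalization, or at worst a nonzerodivisor). The vertical map $j_\al$ is again multiplication by $a_\al$, and commutativity of the right square reduces to the identity $a_\al \cdot \tfrac{1}{a_0} = \tfrac{1}{a_0}\cdot a_\al$ combined with compatibility of the denominators $c_0/a_0$ and $c_\al/a_\al$ under multiplication by $a_\al$, which follows from the $c_0/c_\al$ computation above. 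Finally I would note all horizontal maps are isomorphisms (the first by the monogenic presentation, the second by clearing denominators), all vertical maps are injective because $a_\al$ is a nonzerodivisor in the relevant torsion-free-enough quotients and the chain $A[\al_0'] \subset A[\al']$ is faithfully flat in the relevant sense (or directly: $a_\al b_0 \in (c_\al)$ forces $b_0 \in (c_0)$ by comparing valuations, using \Cref{mu}(2) applied with $\mu = \al'$), giving the $\hook$ arrows.

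The main obstacle I anticipate is the bookkeeping in the middle and right squares: showing that multiplication by $a_\al$ genuinely carries the submodule $(c_0)$ (resp. $(c_0/a_0)$) into $(c_\al)$ (resp. $(c_\al/a_\al)$) and that the resulting map is injective, not merely well-defined. This is where the precise valuation identities $v(\ga_\al) = v(\z) - v(\al-1)$, $v(F'_{\al}(\al')) = (p-1)v(\ga_\al)$ (formula (iv) of \Cref{btap}), and the filtered-union structure of \Cref{fil} all have to be combined; the differential-module computation in the left column is comparatively routine once the monogenic presentation is in hand.
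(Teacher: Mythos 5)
Your proposal follows essentially the same route as the paper's proof: the two load-bearing facts are $c_0/c_\al = a_\al^{p-1}$ (read off from the explicit formula for $F'(\al')$) and $d\al_0' = a_\al\, d\al'$ in $\Om^1_{A[\al']|A}$ (writing $\al_0 = u\al$ with $u \in A^{\times}$ and observing that the leftover term $\ga_0(u-1)/\z$ lies in $A$, hence dies in the relative module), and you identify both; the paper reduces the lemma to exactly these computations and defers the remaining diagram bookkeeping to [V, 6.3.3], as you in effect do. One intermediate claim of yours is incorrect, though harmless: it is $\ga_\al(\al-1)$, not $\ga_\al\al$, that has valuation $v(\z)$, so $v(\al\ga_\al)=v(\z)-v(\al-1)\neq v(\z)$; the correct reason that $a_\al\in A$ is $v(a_\al)=v(\ga_0)-v(\ga_\al)=v(\al-1)-v(\al_0-1)>0$ for $\al\in\mathscr{S}_0$ (and similarly the displayed identity $a_\al c_0=(\text{unit})\,a_\al^p c_\al/a_\al^{p-1}$ is garbled; the clean statement is simply $a_\al c_0=a_\al^{p}c_\al$).
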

\begin{proof} Let us omit the subscript $\al$ for convenience. Since $\al$ and $\al_0$ give rise to the same extension, $\al_0/\al=:u$ is a unit of $A$. We have 
$\dis c=\frac{hp\ga^{p-1}}{\al \z^{p-1}}, c_0=\frac{h_0p\ga_0^{p-1}}{\al_0 \z^{p-1}}$ and hence,  $\dis \frac{c_0}{c}= \lb \frac{u\ga_0}{\ga}\rb^{p-1}=a^{p-1}$. We will verify that $d\al'_0=a d\al'$, the rest follows (see 6.3.3 \cite{V}).

\begin{equation} \al'_0=\ga_0(\al_0 -1) / \z=\ga a (\al u -1)/u\z =\ga a (\al-1)/\z +\ga a (u-1)/u\z=a\al'+\ga_0(u-1)/\z
\end{equation}
Since $v(\al_0-1)=v(\al u -u+u-1)<v(\al -1), v (u-1)=v(\al_0-1)$. Therefore, $\la=\ga_0(u-1)/\z \in A^{\times}$ and we have 
$d \al_0'=a d\al'+\al' da+d\la=ad\al',$ in $\Om^1_{A[\al']|A}$.
\end{proof}

\noindent Due to the defect, we have
$$\is= \js= \lb \left\{ \frac{\si(b)}{b}-1 \mid b \in B^{\times}
\right\}\rb B= \lb \{\si(b)-b \mid b \in B^{\times} \} \rb B$$

By \Cref{fil} and  $ v(\si(\al')-\al')= v \lb \lb \frac{\ga_\al}{\z}\rb (-\z \al) \rb =v(-\ga_\al \al)$, we have

\begin{equation}
 \is = \js =
\lb \{ \si(\al')-\al' \mid \al \in \mathscr{S}_0 \} \rb B= \lb \{\ga_{\al} \mid \al \in \mathscr{S}_0  \} \rb B
\end{equation}

\begin{lm}\label{diff} Consider the fractional ideals $\T$ and
$\T'$ of $L$ given by $\T= \{x \in L \mid x\ga_0 \in \js \}$ and\\
$\T'=\{ x \in L \mid x\ga_0 \in \ns B \}$. Then we have:
\begin{enumerate}[(a)]
\item $\Om^1_{B|A} \cong \T/\T'$ 
\item $\T/ \js \T \cong \js/\js^2$ 
\end{enumerate}
\end{lm}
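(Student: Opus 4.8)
The plan: statement (b) is essentially formal, and statement (a) is where the work lies. For (b), by definition $\T=\ga_0^{-1}\js$ as fractional ideals of $B$ inside $L$, so $\js\T=\ga_0^{-1}\js^2$, and multiplication by $\ga_0$ is a $B$-linear automorphism of $L$ carrying $\T$ onto $\js$ and $\js\T$ onto $\js^2$; passing to quotients gives $\T/\js\T\cong\js/\js^2$.

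For (a), I will exploit that $B=\bigcup_{\al\in\mathscr{S}_0}A[\al']$ is a filtered union (\Cref{fil}; the $\al\notin\mathscr{S}_0$ contribute nothing new beyond $A[\al_0']$) and that K{\"a}hler differentials commute with filtered colimits of rings, so $\OL\cong\varinjlim_{\al}\Om^1_{A[\al']|A}$. Since $A[\al']\cong A[X]/(F_\al)$ for $F_\al$ the minimal polynomial of $\al'$, one has $\Om^1_{A[\al']|A}\cong A[\al']/(c_\al)$ with $c_\al=F'_{\al'}(\al')$. By \Cref{alpha level} (using $\al_0$ as base, so $a_{\al_0}=1$), after rescaling $\Om^1_{A[\al']|A}\cong\tfrac1{a_\al}A[\al']/\tfrac{c_\al}{a_\al}A[\al']$ via $b\,d\al'\mapsto b/a_\al$ the transition maps of the colimit become inclusions of fractional ideals $\tfrac1{a_\al}A[\al']\hookrightarrow\tfrac1{a_\be}A[\be']$ of $B$ (for $\al\le\be$). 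Computing the colimit termwise then yields, inside $L$,
\[
\OL\;\cong\;\Big(\,\bigcup_{\al\in\mathscr{S}_0}\tfrac1{a_\al}A[\al']\,\Big)\Big/\Big(\,\bigcup_{\al\in\mathscr{S}_0}\tfrac{c_\al}{a_\al}A[\al']\,\Big),
\]
and it remains to identify the numerator with $\T$ and the denominator with $\T'$.

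Both identifications rely on $B$ being a valuation ring, so that ratios of the $\ga_\al$ of nonnegative valuation lie in $A$, together with the description $\is=\js=(\{\ga_\al:\al\in\mathscr{S}_0\})B$ obtained above. For the numerator: two elements of $\mathscr{S}$ generating $L$ differ by a unit of $A$ (\Cref{conj}), so $\al/\al_0\in A^\times$, and since $a_\al=\al_0\ga_0/(\al\ga_\al)$ we get $\tfrac1{a_\al}A[\al']=\tfrac{\ga_\al}{\ga_0}A[\al']$; a filtered-union argument then gives $\bigcup_\al\tfrac{\ga_\al}{\ga_0}A[\al']=\bigcup_\al\tfrac{\ga_\al}{\ga_0}B=\ga_0^{-1}\js=\T$. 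For the denominator: from $c_\al=p(\ga_\al\al/\z)^{p-1}$, $\al^p=h_\al\in A^\times$ and $v(p)=(p-1)v(\z)$ one finds $\tfrac{c_\al}{a_\al}=\tfrac p{\z^{p-1}}\cdot\tfrac{h_\al}{\al_0}\cdot\tfrac{\ga_\al^p}{\ga_0}$ with $\tfrac p{\z^{p-1}},h_\al\in A^\times$ and $\al_0\in A[\al_0']^\times$ (because $\al_0=1+(\z/\ga_0)\al_0'$ with $\z/\ga_0\in\m_A$, whence $\al_0^{-1}=\al_0^{p-1}/h_0\in A[\al_0']$), so $\tfrac{c_\al}{a_\al}A[\al']=\tfrac{\ga_\al^p}{\ga_0}A[\al']$ and $\bigcup_\al\tfrac{c_\al}{a_\al}A[\al']=\ga_0^{-1}\big(\bigcup_\al\ga_\al^pB\big)$. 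Finally \Cref{hn} gives $\ns=\cH=\bigcup_\al\tfrac{\z^p}{h_\al-1}A$ (in the defect case the generators of $\cH$ are totally ordered by valuation), and $v(h_\al-1)=p\,v(\al-1)=p(v(\z)-v(\ga_\al))$ forces $v\!\left(\tfrac{\z^p}{h_\al-1}\right)=p\,v(\ga_\al)$, hence $\tfrac{\z^p}{h_\al-1}B=\ga_\al^pB$; thus $\bigcup_\al\tfrac{c_\al}{a_\al}A[\al']=\ga_0^{-1}\ns B=\T'$, and $\OL\cong\T/\T'$.

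The hard part is the denominator step: it forces one to reconcile the different-type invariant $c_\al=F'_{\al'}(\al')$ attached to each subring $A[\al']$ with the globally defined norm ideal $\ns$ of $A$, which is precisely where \Cref{hn} is used. A secondary point requiring care is checking that the identifications of \Cref{alpha level} really are compatible with the transition maps of the filtered union, so that the colimit may be computed termwise.
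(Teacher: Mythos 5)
Your proof is correct and follows essentially the same route as the paper: identify each $\Om^1_{A[\al']|A}$ with the rescaled quotient $\tfrac1{a_\al}A[\al']/\tfrac{c_\al}{a_\al}A[\al']$ via \Cref{alpha level}, pass to the filtered colimit, and match the resulting fractional ideals with $\T$ and $\T'$ by comparing valuations (your explicit use of \Cref{hn} and $v(h_\al-1)=p\,v(\al-1)$ is just a spelled-out version of the paper's computation $v(c_\al/a_\al)=p\,v(\ga_\al)-\mu$). For (b) the paper uses multiplication by $\ga_0\al_0$ rather than $\ga_0$, which differs only by the unit $\al_0$, so the arguments coincide.
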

\begin{proof}
\begin{enumerate}[(a)]
\item Let $I$ be the fractional ideal of $L$ generated by the
elements $(\frac{1}{a_\al})$. Let $I'$ be the fractional ideal
of $L$ generated by the elements $(\frac{c_\al}{a_\al})$. Under
the isomorphisms described in the preceding discussion, we can
identify each $\dis \Om^1_{A[\al']|A}$ with $
(\frac{1}{a_\al})A[\al']/(\frac{c_\al}{a_\al})A[\al']$. Taking
limit over $\al$'s, we can identify $\Om^1_{B|A}$ with $I/I'$.

\noindent Since $-v(a_\al)=v(\ga_\al)-v(\ga_0)= v(\ga_\al)-\mu$,
$\dis I = \T$.
Similarly, $v(c_\al)=(p-1)v(\ga_\al)$ implies that $
v(\frac{c_\al}{a_\al})=pv(\ga_\al)-\mu$ and hence, $ I'=\T'$.

\item This follows from the fact that $\T \cong \js $ as
$B$-modules, via the map $\times \ga_0 \al_0: x \mapsto x \ga_0 \al_0$.
\end{enumerate}
\end{proof}

\subsubsection{Refined Swan Conductor and Proof of \Cref{comm dia} in the defect case} \noindent Let $L|K$ be a defect extension as in 1.1 for the rest of this section.
\begin{defn} Consider the ideals $\I_\al$ of $A$ defined for each $\al \in \mathscr{S}_0$ by $$\I_\al:= \left\{ x \in K \mid v(x) \geq \lb \frac{p-1}{p}\rb  v \lb \frac{\z^p}{h_\al-1} \rb \right\}$$ and let $$\I:= \cup_{\al \in \mathscr{S}_0} \I_\al$$
We note that the definition of $\I_\al$ only depends on the valuation of $h_\al-1.$\\
The \tit{refined Swan conductor} $\rsw$ of the extension $L|K$ is defined to be the $A$-homomorphism\\ $\rsw: \cH \to  \omk/\I \omk$ given by $\dis r \mapsto \frac{r  }{\z^p}\dlo h_\al,$ where $r \in \lb \frac{\z^p}{h_\al-1} \rb$ for some $\al \in \mathscr{S}_0$. 

\end{defn}
\noindent We will show, as before, that this definition does not depend on the choice of $h_\al$.
\begin{lm}\label{rsw defect} \begin{enumerate}[(i)] 
\item The map $\rsw$ in this case, is well-defined.
\item For each $\al \in \mathscr{S}_0, \lb \frac{\z^p}{h_\al-1} \rb \subset \I_\al \subset \lb \frac{\z}{\al-1} \rb A[\al'] \cap A$
\item $\cH \subset \I \subset \is \cap A$
\end{enumerate} 
\end{lm}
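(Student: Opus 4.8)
The plan is to mirror, step by step, the structure used in the defectless case (\Cref{rsw well-def}, \Cref{hii}, and the proof of \Cref{comm dia} in the defectless case), but now working inside a single ``level'' $A[\al']$ of the filtered union and then passing to the limit via \Cref{alpha level} and \Cref{diff}. For part (i), I would fix $\al \in \mathscr{S}_0$ with $\al^p = h_\al$ and let $a^p h_\al$ also lie in $\mfrak{A'}$ (so that $a^p h_\al - 1 \in \m_A$). As in \Cref{rsw well-def}, the difference of the two candidate homomorphisms on $r \in \lb \frac{\z^p}{h_\al - 1} \rb$ is $\frac{r}{\z^p} p \dlo a$, and the computation there reduces well-definedness to the inequality $v(a-1) \geq \frac{1}{p} v(h_\al - 1)$, which follows from $v(h_\al - 1) = v(a^p h_\al - 1) \geq \min(v(a^p-1), v(h_\al-1))$ exactly as before — this uses only that both $h_\al$ and $a^p h_\al$ have $h-1 \in \m_A$, which holds by definition of $\mathscr{S}_0$. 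The one genuinely new point is to check that the value of $\rsw$ on an element $r$ lying in two different ideals $\lb \frac{\z^p}{h_\al - 1}\rb$ and $\lb \frac{\z^p}{h_{\al_1} - 1}\rb$ (for $\al, \al_1 \in \mathscr{S}_0$ with, say, $A[\al'] \subset A[\al_1']$) is the same modulo $\I \omk$; here I would use the identity from the proof of \Cref{alpha level}, $\al'_{1} = a \al' + \la$ with $\la \in A^\times$ and $\frac{c_0}{c} = a^{p-1}$, together with the relation $h_{\al_1}$ versus $h_\al$ (they differ by a $p$-th power of a unit in $K$, indeed of an element of $A^\times$ by \Cref{conj}), reducing again to $p \dlo(\text{unit})$ being killed in $\omk / \I\omk$.

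For part (ii), fix $\al \in \mathscr{S}_0$ and set $\nu_\al := v\lb \frac{\z}{\al - 1}\rb$, so $p \nu_\al = v\lb \frac{\z^p}{h_\al - 1}\rb$ and $\I_\al = \{ x \mid v(x) \geq (p-1)\nu_\al \}$. The first inclusion $\lb \frac{\z^p}{h_\al - 1}\rb \subset \I_\al$ is immediate since $p\nu_\al \geq (p-1)\nu_\al$ (note $\nu_\al > 0$ in the defect case). For the second inclusion I would exhibit an explicit element of $\lb \frac{\z}{\al-1}\rb A[\al']$ of valuation exactly $(p-1)\nu_\al$: the natural candidate is $(\si - 1)\lb \frac{\z}{\al - 1}\rb$ up to units, or more precisely a suitable power/product built from $\si(\al') - \al' = -\ga_\al \al$ (valuation $\nu_\al$, since $v(\ga_\al) = v(\z) - v(\al-1) = \nu_\al$) — raising to the $(p-1)$-st power lands in $A[\al']$ and has valuation $(p-1)\nu_\al$, and since everything of valuation $\geq (p-1)\nu_\al$ in $K$ is then a multiple of it, we get $\I_\al \subset \lb \frac{\z}{\al-1}\rb A[\al'] \cap A$. (One must take a product of $\si^i - \ze^j$ type terms rather than a literal power to stay inside $A[\al']$, exactly the device used in \Cref{btap}, but the valuation bookkeeping is the same.)

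For part (iii): $\cH \subset \I$ follows by taking the union over $\al \in \mathscr{S}_0$ of the first inclusion in (ii), since $\cH = \lb \{ \frac{\z^p}{h-1} \mid h \in \mfrak{A'}\}\rb = \bigcup_\al \lb \frac{\z^p}{h_\al - 1}\rb$ in the defect case (as noted before \Cref{fil}). For $\I \subset \is \cap A$, take the union over $\al$ of the second inclusion in (ii), using that by equation (6.3)/(6.4) we have $\is = \js = \lb \{ \ga_\al \mid \al \in \mathscr{S}_0\}\rb B$ and that each $\lb \frac{\z}{\al - 1}\rb A[\al'] = \lb \ga_\al/\al \rb A[\al'] \subset \is$ since $\ga_\al \al \in \is$ and $\al \in B^\times$ gives $\ga_\al / \al \in \is$ as well — so $\I_\al \subset \is \cap A$ for every $\al$, hence $\I = \bigcup \I_\al \subset \is \cap A$. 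The main obstacle, as in the defectless Case $p = 2$, $e_{L|K} = 2$ of \Cref{hii}, is the $p = 2$ subtlety: when $p = 2$ one has $p - 1 = 1$ and the inequalities become tight, and one must be careful that the generator of $\is$ of smallest valuation is actually $\ga_\al \al$ (valuation $\nu_\al$) and not something larger; I expect this to require the same kind of case analysis on whether a certain valuation gap is realized in $A$, but the filtered-union description should make it cleaner than in the discrete case, since here we can always pass to a larger $\al' \in \mathscr{S}_0$ to shrink $\nu_\al$.
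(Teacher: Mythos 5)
Your treatment of (i) and (iii) matches the paper's proof in all essentials: for (i) the paper also reduces to $h_2=a^ph_1$ with $a\in A^{\times}$ (same implicit normalization via \Cref{conj}), restricts to the case $v(h_1-1)<v(h_2-1)$, and deduces $v(a^p-1)=v(h_1-1)$ from $v(h_2-1)=v\lb(a^p-1)h_1+(h_1-1)\rb>v(h_1-1)$, which yields the inequality $v(a-1)\geq\frac{1}{p}v(h_1-1)$ needed to run the computation of \Cref{rsw well-def}; for (iii) it likewise takes the union over $\al\in\mathscr{S}_0$ of the inclusions in (ii) and invokes the identity $\is=\js=\lb\{\ga_\al \mid \al\in\mathscr{S}_0\}\rb B$.

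The weak point is your argument for the second inclusion in (ii). You produce an element such as $(\si(\al')-\al')^{p-1}=(\ga_\al\al)^{p-1}$ of $\lb\frac{\z}{\al-1}\rb A[\al']$ with valuation exactly $(p-1)\nu_\al$ and then conclude that every $x\in K$ with $v(x)\geq(p-1)\nu_\al$ is ``a multiple of it.'' The quotient of $x$ by that element only has non-negative valuation, so it lies in $B$, not necessarily in $A[\al']$ (which, in the defect case, is a proper subring of $B$ and not a valuation ring); as written this only gives $x\in\lb\frac{\z}{\al-1}\rb B$, not the asserted $x\in\lb\frac{\z}{\al-1}\rb A[\al']\cap A$. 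The paper's argument is a one-liner that sidesteps this: from $v\lb\frac{\z}{\al-1}\rb=\frac{1}{p}\,v\lb\frac{\z^p}{h_\al-1}\rb\leq\frac{p-1}{p}\,v\lb\frac{\z^p}{h_\al-1}\rb$, any $x\in\I_\al$ satisfies $v(x)\geq v(\ga_\al)$, and since $x,\ga_\al\in K$ this forces $x/\ga_\al\in K\cap B=A$, whence $x\in\ga_\al A=\frac{\z}{\al-1}\al' A\subset\lb\frac{\z}{\al-1}\rb A[\al']$. The repair is thus immediate, but your detour through an element of exact valuation is both unnecessary and, in the step quoted above, invalid. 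Finally, your closing worry in (iii) about a $p=2$ case analysis is moot: in the defect case $\is=\js$ identically (as you yourself cite), so no analogue of the wild $e_{L|K}=2$ case of \Cref{hii} arises and $\I_\al\subset\ga_\al A\subset\js\cap A=\is\cap A$ directly.
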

\begin{proof} \begin{enumerate}[(i)] 
\item  Let $h_1=h_{\al_1}, h_2=h_{\al_2}$ for some $\al_1, \al_2 \in \mathscr{S}_0$ and $r \in \lb \frac{\z^p}{h_1-1} \rb \cap \lb \frac{\z^p}{h_2-1} \rb.$ It is enough to focus on the case when $v(h_1-1) \neq v(h_2-1)$. We imitate the proof of \Cref{rsw well-def}.\\ Let $h_2=a^ph_1; a  \in A^{\times}, a \neq 1$ and without loss of generality, assume that $v(h_1-1) < v(h_2-1).$ It is enough to show that $v(a^p-1) \geq v(h_1-1)$. Since
$$v(h_2-1)=v(a^ph_1-1)=v((a^p-1)h_1+(h_1-1)) > v(h_1-1),$$
we must, in fact, have $v(a^p-1) = v(h_1-1)$. Hence,  $\rsw$ is well-defined in this case.
\item The first part $\lb \frac{\z^p}{h_\al-1} \rb \subset \I_\al$ is easy to see. The next part follows from 
$$ v \lb \frac{\z}{\al-1} \rb = \lb \frac{1}{p} \rb v \lb \frac{\z^p}{h_\al-1} \rb \leq \lb \frac{p-1}{p} \rb v \lb \frac{\z^p}{h_\al-1} \rb .$$
\item This follows directly from (ii), since $\is = \js = \lb \left\{ \frac{\z}{\al-1} \mid \al \in \mathscr{S}_0 \right\}  \rb$. 
\end{enumerate}
\end{proof}

\begin{proof}[Proof of \Cref{comm dia} for the defect case] Let $L|K$ be a defect extension as in 1.1. 
\Cref{alpha level} and \Cref{diff} allow us to write $\OL=\varinjlim_{\al \in \mathscr{S}_0} \Om^1_{A[\al']|A}$ and it is
enough to consider the diagram for each $\al \in \mathscr{S}_0$:

\begin{equation}
\begin{tikzcd}[column sep=large]
\Om^1_{A[\al']|A}/\lb \frac{\z \al}{\al-1}\rb A[\al']\Om^1_{A[\al']|A}
\arrow{r}{\varphi_\si}[swap]{\cong} \arrow{d}[swap]{\dn}
&\lb \frac{\z \al}{\al-1}\rb A[\al']/\lb \frac{\z \al}{\al-1}\rb ^2 A[\al']
\arrow[hook]{d}{\overline{N}}\\
\OK /(\is \cap A )\OK & \lb \frac{\z^ph_\al}{h_\al-1}\rb A/\lb \frac{\z^ph_\al}{h_\al-1}\rb^2A
\arrow{l}{\rsw}
\end{tikzcd}\end{equation}

 where the maps are given by 

\begin{center}$\begin{tikzcd}[column sep=large]
b d  \al' \arrow{r}{\varphi_\si}[swap]{} \arrow{d}[swap]{\dn}
&b\al' \lb \frac{\z \al}{\al-1}\rb \arrow{d}{\overline{N}}\\
N(b\al') \dl (h_\al-1)  &N(b\al') \lb \frac{\z^ph_\al}{h_\al-1}\rb \arrow{l}{\rsw}
\end{tikzcd}$\end{center}

We note that in $\oml, \dl (\al-1)= \dl \al' + \dl \lb \frac{\z}{\ga_\al} \rb = \dl \al'=\frac{d \al'}{\al'}$ and
$\frac{\si(\al')}{\al'}-1=\frac{\z \al}{\al-1}$.

At each $\al$-level, we observe the following:
\begin{enumerate}[(i)]
\item The map $\varphi_\si:
\Om^1_{A[\al']|A}/(\frac{\z \al}{\al-1})\Om^1_{A[\al']|A} \to
\lb \frac{\z \al}{\al-1}\rb A[\al']/\lb \frac{\z \al}{\al-1}\rb ^2 A[\al']$ is same as the one obtained
from \Cref{diff}.  This can be proved as follows.\\ By \Cref{diff},
$\Om^1_{A[\al']|A}/(\frac{\z \al}{\al-1})\Om^1_{A[\al']|A} \cong
(\frac{1}{a_\al})/(\frac{\z \al}{\al-1})(\frac{1}{a_\al}) \cong
(\frac{\z \al}{\al-1})/(\frac{\z \al}{\al-1})^2$ under the composition $d
\al' \mapsto \frac{1}{a_\al} \mapsto \ga_0 \al_0
\frac{1}{a_\al}=\al \ga_\al$.

On the other hand, $\varphi_\si(d \al')=\al' \lb \frac{\z \al}{\al-1}
\rb = \al \ga_\al$.

\item The map $\rsw$ is well-defined. We just need to verify that for $h=h_\al$, $\rsw \lb \frac{\z^ph}{h-1}\rb = \dl (h-1)$. By definition, $$\rsw\lb  \frac{\z^p h}{h-1} \rb:=\lb \frac{h}{h-1}\rb (h-1) \lb \frac{\dl (h-1)}{h}\rb=\dl (h-1)=\dl N(\al-1).$$

\end{enumerate}

This concludes the proof.

\end{proof}

\section{Results for  the non-Kummer Case}
\noindent In the $p=2$ case, we always have $\ze=-1 \in K$. For the rest of this section, we will assume $p>2$.

\noindent 
\begin{notn} Let $K'$ be a valued field of characteristic $0$ with henselian valuation ring $A'$,  valuation $v'$ and residue field $k'$ of characteristic $p>0$. Consider  a non-trivial Galois extension $L'|K'$ of degree $p$, with Galois group $G':=\Gal(L'|K')$. Let $w', B', l'$ denote the valuation, valuation ring and the residue field of $L'$. We consider the fields $K:= K'(\ze)$, $L
:=L'(\ze)$ and  the Kummer extension $L|K$ described by $\al^p=h$ for some $h \in K$. \\ 
The Galois group $G:=\Gal(L|K)$ is cyclic of order $p$, generated by $\si: \al \mapsto \ze \al$. Let $\La_K:= \Gal(K|K')$ and $\La_L:= \Gal(L|L')$, we will omit the subscripts when the meaning is clear. Note that the order  of $\La$ is coprime to $p$. We will use the notation of 1.1 for the extension $L|K$.
\end{notn}
\subsection{Invariants for $L'|K'$}
 First we define the corresponding invariants for the extension $L'|K'$ as follows.
\begin{equation}\label{i'} \cI' := \ \lb \left\{ \si(b)-b \mid b \in B'
\right\} \rb \sub B'
\end{equation}
\begin{equation}\label{j'} \cJ' :=  \lb \left\{ \frac{\si(b)}{b}-1 \mid b \in L'^{\times}
\right\} \rb \sub B'
\end{equation}
\begin{equation}\label{n'} \cN' := \lb N_{L'|K'}(\cJ') \rb \subset A'
\end{equation} 
\begin{equation}\label{h'} \cH' := (\cH)^{\La} \subset A'
\end{equation}
We prove the following lemma in order to prove \Cref{'} and further results.
\begin{lm}\label{mu ana} Let $L|L'$ be as above, $[L:L']=m$, where $m$ is a positive integer coprime to $p$. Assume that $L|L'$ is either unramified or totally ramified. Then there exists an $L'$-basis $\{b_i\}_{1 \leq i \leq m}$ of $L$ that satisfies the following properties.
\begin{enumerate}[(B1)]
\item $\{b_i\}_{1 \leq i \leq m}$ is also a $K'$-basis of $K$.
\item $\{b_i\}_{1 \leq i \leq m} \sub A$
\item If $L|L'$ is totally ramified, the valuations $\{w(b_i)\}_{1 \leq i \leq m}$ are all distinct modulo the value group of $L'$. If $L|L'$ is unramified, the residue classes $ \{\overline{b_i}\}_{1 \leq i \leq m}$ form a basis of the residue extension $l|l'$. 
\item For any $0 \neq x=\sum_{i=1}^m x_ib_i; x_i \in L',$ we have $w(x)=\min_{i} w(x_ib_i)$.
\item For any $0\neq x=\sum_{i=1}^m x_ib_i; x_i \in L'$as above, $x \in B \iff x_i b_i \in B$ for all $1 \leq i \leq m$.
\end{enumerate}
\end{lm}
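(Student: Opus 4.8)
The plan is to build the basis $\{b_i\}$ by hand in each of the two cases, exploiting that $m=[L:L']$ is coprime to $p$ so that the ramification of $L|L'$ is tame, and then to verify that the resulting basis descends to $K$ via Galois-invariance of the valuation data. First I would fix notation: since $\La_L=\Gal(L|L')$ has order $m$ coprime to $p$, the extension $L|L'$ is tame, so $w(L^\times)/w(L'^\times)$ is cyclic of order dividing $m$ and $l|l'$ is separable; by the hypothesis that $L|L'$ is unramified or totally ramified, exactly one of these is nontrivial (with the other trivial). In the totally ramified case I would pick $\pi\in L^\times$ with $w(\pi)$ generating the quotient $w(L^\times)/w(L'^\times)\cong \Z/m\Z$, and set $b_i:=\pi^{i-1}$ for $1\le i\le m$; the valuations $w(b_i)=(i-1)w(\pi)$ are then pairwise distinct mod $w(L'^\times)$, which immediately gives (B3), and (B4) follows because a sum of elements of pairwise distinct valuations has valuation the minimum. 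In the unramified case I would instead choose $\overline{b_1},\dots,\overline{b_m}\in l$ a basis of $l|l'$ (using separability and a primitive element if desired), lift them to $b_i\in B$, and then (B3)–(B4) are the standard facts about unramified extensions: residues forming a basis lift to a module basis, and $w$ of a linear combination is computed on residues.

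The next step, and the one I expect to be the main obstacle, is (B1)–(B2): that this $L'$-basis of $L$ is simultaneously a $K'$-basis of $K$ and lies in $A$. The point is that $K=K'(\ze)$ and $L=L'(\ze)$ with $[K:K']=[L:L']=m$ (both equal to the order of $\La$, since $\ze\notin K'$ forces the same tame extension upstairs and downstairs), and the natural map $L\ten_{L'}K'\to K$ — equivalently the identification $K=L'\cdot K'$ inside $L$ — is an isomorphism because $L$ and $K'$ are linearly disjoint over $L'$ (their degrees over $L'$ are $p$ and $m$, which are coprime). Hence any $L'$-basis of $L$ is a $K'$-basis of $K$, giving (B1). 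For (B2) I would note that in the totally ramified case $\pi$ can be chosen in $B$ (adjust by an element of $L'^\times$ to make $w(\pi)\ge 0$ while keeping its class a generator, using $0\le w(\pi)<$ the relevant value), so $b_i=\pi^{i-1}\in B$; intersecting with $K$ gives $b_i\in B\cap K=A$ since $A$ is the valuation ring of $K$ and $B$ that of $L$ with $B\cap K=A$. In the unramified case the lifts $b_i$ can be taken in $B$ by construction, and again $b_i\in B\cap K=A$ — but one must check the $b_i$ can be chosen in $K$, not just in $L$; this is where I would invoke that the whole construction can be carried out for the extension $K|K'$ in place of $L|L'$ (same tame type, same degree $m$), producing $b_i\in A$, and then observe that by linear disjointness these same elements form an $L'$-basis of $L$.

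Finally, for (B5) — the integrality criterion $x=\sum x_ib_i\in B\iff x_ib_i\in B$ for all $i$ — I would argue exactly as in Lemma \ref{mu} (2): the ``if'' direction is trivial, and for ``only if'' one uses (B4), namely $w(x)=\min_i w(x_ib_i)$. If some $x_jb_j\notin B$, i.e.\ $w(x_jb_j)<0$, then in the totally ramified case the $w(x_ib_i)$ with $x_i\ne 0$ are distinct mod $w(L'^\times)$, so the minimum is attained uniquely and $w(x)=w(x_jb_j)<0$, contradicting $x\in B$; in the unramified case one reduces to the residue field: scaling $x$ by a suitable element of $L'^\times$ one may assume $\min_i w(x_ib_i)=0$, and then the residue $\bar x=\sum \overline{x_i}\,\overline{b_i}$ is a nontrivial $l'$-combination of the $\overline{b_i}$ (nontrivial since some $\overline{x_i b_i}\ne 0$), hence $\bar x\ne 0$ and $w(x)=0$, so all $w(x_ib_i)\ge 0$. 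This closes the argument. The bulk of the work is the linear-disjointness bookkeeping in (B1)–(B2); everything else is a tame-ramification computation parallel to \Cref{mu}.
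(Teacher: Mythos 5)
Your end position --- choose the $b_i$ inside $K$, using $e_{K|K'}=m$ in the totally ramified case and $[k:k']=m$ in the unramified case, and let coprimality of $m$ and $p$ transfer the basis property to $L|L'$ --- is exactly the paper's proof, and your treatment of (B4)--(B5) (unique minimum of valuation in the totally ramified case; scaling and passing to residues, using $l'$-independence of the $\overline{b_i}$, in the unramified case; (B5) as an immediate consequence of (B4)) matches the paper's argument. But as written, your route to (B1)--(B2) has a direction error. The assertion ``any $L'$-basis of $L$ is a $K'$-basis of $K$'' is false --- an $L'$-basis of $L$ need not lie in $K$ at all --- and the phrase ``$L$ and $K'$ are linearly disjoint over $L'$'' is not meaningful, since $K'\subset L'$. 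What you need (and what you do invoke at the end of your unramified discussion) is the opposite direction: $K$ and $L'$ are linearly disjoint over $K'$ because $[K:K']=m$ and $[L':K']=p$ are coprime, so a $K'$-basis of $K$ with the stated valuation (resp.\ residue) properties is automatically an $L'$-basis of $L$. Consequently, in the totally ramified case you cannot take $\pi\in L^{\times}$ and later ``intersect with $K$''; nothing in that construction puts the $b_i$ in $K$. You must pick $\pi$ (or simply $m$ coset representatives, as the paper does) inside $K^{\times}$, which is possible because $e_{K|K'}=m$. In short, the fix you describe for the unramified case --- run the construction for $K|K'$ and transfer --- must be applied uniformly to both cases; once it is, your proof coincides with the paper's.

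One further point, which the paper also leaves implicit but which your write-up should address since everything is constructed at the level of $K|K'$: (B3) asks for distinctness of the $w(b_i)$ modulo $w(L'^{\times})$ (resp.\ $l'$-linear independence of the $\overline{b_i}$), not merely modulo $w(K'^{\times})$ (resp.\ over $k'$). This again follows from coprimality: a class in $w(K^{\times})/w(K'^{\times})$ has order dividing $m$, while $w(L'^{\times})/w(K'^{\times})$ has order $1$ or $p$, so a difference $w(b_i)-w(b_j)$ lying in $w(L'^{\times})$ already lies in $w(K'^{\times})$; at the residue level, the degree count $[kl':l']=m$ forces a $k'$-basis of $k$ to remain $l'$-independent. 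Finally, your specific choice $b_i=\pi^{i-1}$ tacitly assumes $w(K^{\times})/w(K'^{\times})$ is cyclic; this is true here (the extension is tame, totally ramified and cyclic Galois), but the paper's choice of arbitrary coset representatives avoids having to justify it.
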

\begin{proof} \textbf{(B1-3)}: If $L|L'$ is totally ramified, the ramification indices $e_{L|L'}$ and $e_{K|K'}$ are both equal to $m$. We can choose $m$ elements $\{b_i\}_{1 \leq i \leq m}$ of $K$ that have distinct valuations modulo the value group of $K'$. Without loss of generality, we may assume that they have non-negative valuations.

If $L|L'$ is unramified, $[l:l']=m=[k:k']$ and we can choose units $\{b_i\}_{1 \leq i \leq m}$ of $K$ satisfying the required conditions. 

\textbf{(B4)}: If $L|L'$ is totally ramified, $w(x_i b_i)$ are all distinct by (B3), and therefore, exactly one term achieves the minimum valuation. 

If $L|L'$ is unramified, it is possible for more than one term to have the minimum valuation. However,  $x$ cannot have a greater valuation. This can be proved as follows.
Without loss of generality, let $w(x_1b_1)= \min_i w(x_ib_i)$. If $w(x)>w(x_1b_1)=w(x_1),$
$ \frac{x}{x_1}=b_1 + \sum_{i=2}^m \lb \frac{x_i}{x_1}\rb b_i  \in \m_L$. Since $\overline{b_i}$ are $l'$-linearly independent, this is not possible.

\textbf{(B5)}: This follows from (B4).
\end{proof}
\begin{pr}\label{'} We have the following relations between the invariants for $L|K$ and the invariants for $L'|K'$
\begin{enumerate}
\item $\js =\cJ'B$
\item $(\js)^{\La}=\cJ'$
\item $\ns = \cN'A$
\item $(\ns)^{\La}=\cN'$
\item $(\is)^{\La} = \cI'$
\end{enumerate}

\end{pr}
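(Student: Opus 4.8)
The plan is to exploit the fact that $L = L'(\ze)$ with $\La = \Gal(L|L')$ of order $m$ coprime to $p$, so that $L|L'$ is tame; in particular it is either unramified or totally (tamely) ramified after possibly passing to the henselization, and \Cref{mu ana} supplies an $L'$-basis $\{b_i\}$ of $L$ which is simultaneously a $K'$-basis of $K$, lies in $A$, and is ``valuation-split'' in the sense of (B4)--(B5). The key structural point is that $\si$ (the generator of $G = \Gal(L|K)$) commutes with the $\La$-action, since $G$ and $\La$ together generate $\Gal(L|K') = G \rtimes \La$ (indeed $\La$ acts on $\ze$ and hence on $G$, but $\si$ and any $\tau \in \La$ satisfy a commutation relation; what matters is that $\si$-orbits of the relevant elements are $\La$-stable up to units). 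I would first record that, because $|\La|$ is coprime to $p = |G|$, taking $\La$-invariants is exact on $\Z[1/p]$-modules and commutes with forming ideals generated by $\La$-stable sets of generators. That is the engine behind (2), (4), (5).

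First I would prove (1): $\js = \cJ' B$. The inclusion $\cJ' B \subset \js$ is immediate from the definitions since $L'^\times \subset L^\times$. For the reverse, take $b \in L^\times$ and expand $b = \sum_i x_i b_i$ with $x_i \in L'$; using \Cref{bin} (the Leibniz-type expansion of $\si - 1$) together with (B5) of \Cref{mu ana}, I would show $\si(b)/b - 1$ lies in the ideal of $B$ generated by the elements $\si(x_i)/x_i - 1$ for $x_i \in L'^\times$ and by $\si(b_i)/b_i - 1$; the latter are handled by observing that each $b_i$ can be taken so that $\si(b_i) = b_i$ or at least so that $\si(b_i)/b_i - 1 \in \cJ' B$ — here one uses that $\si$ fixes $K$ and the $b_i$ are chosen in $K$, so in fact $\si(b_i) = b_i$ and these terms vanish. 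Then (3) follows from (1) by applying $N_{L|K}$ and using multiplicativity of the norm together with $\ns = (N_{L|K}(\js))$, $\cN' = (N_{L'|K'}(\cJ'))$ and the compatibility $N_{L|K}|_{K'} = N_{K|K'} \circ N_{L|K}$... more precisely one uses that $N_{L|K}(\cJ' B)$ generates the same $A$-ideal as $\cN' A$, because $N_{L|K}$ restricted to elements of $B'$ is a power of $N_{L'|K'}$ twisted by $\La$, and $\cN'$ is $\La$-stable.

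Next, (2), (4), (5) are the ``descent'' statements. For (5): $(\is)^\La \supset \cI'$ is clear since $B' \subset B$ is $\La$-fixed and $\si(b) - b \in \is$ for $b \in B'$. For the reverse inclusion I would take a $\La$-invariant element of $\is$, write it using the generators $\si(\be) - \be$, $\be \in B$, average over $\La$ (legitimate since we may work in $B[1/p]$ and $\is$ is $\La$-stable), decompose $\be = \sum x_i b_i$ via \Cref{mu ana}, and use (B5) to reduce to $x_i \in B'$, landing in $\cI'$. Statements (2) and (4) are the same argument applied to $\js$ and $\ns$ in place of $\is$, using (1) and (3) respectively: since $\js = \cJ' B$ and $\cJ'$ is a $\La$-stable ideal of $B'$, the equality $(\cJ' B)^\La = \cJ'$ follows from exactness of $\La$-invariants together with $B^\La \cap (L' \otimes \text{stuff}) = B'$, i.e. $(B')^\La = B'$ and the valuation-split basis.

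I expect the main obstacle to be (5), the statement $(\is)^\La = \cI'$, precisely because $\is$ — unlike $\js$ — is \emph{not} extended from $B'$ in the naive way (the analogue of (1) fails for $\is$, just as $\is \neq \js$ in general even over $K$). So one cannot simply write $\is = \cI' B$ and invert. The fix is to avoid claiming $\is = \cI' B$ and instead argue directly at the level of invariants: express a $\La$-fixed element $\xi \in \is$ as $\xi = \sum_j c_j(\si(\be_j) - \be_j)$, apply the averaging operator $e = \frac{1}{m}\sum_{\tau \in \La}\tau$, and push $e$ past $\si$ (using that $\si$ and $\tau$ commute on $B$, since both restrict correctly and $\La$ normalizes $G$ — here one must be slightly careful because $\tau(\ze) = \ze^{a_\tau}$, so $\tau \si \tau^{-1} = \si^{a_\tau}$; one then uses that $\langle \si \rangle$ as a whole is $\La$-stable so the ideal $\is$ is genuinely $\La$-stable even though individual generators get permuted). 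After averaging, each term is of the form $\si(e\be_j) - e\be_j$ with $e\be_j \in B$, and a final application of \Cref{mu ana}(B5) to $e\be_j$ forces the $L'$-components into $B'$. This requires care with the interaction between the $\La$-twisting of $\si$ and the basis $\{b_i\}$, but no new ideas beyond what \Cref{mu ana} and \Cref{bin} already provide.
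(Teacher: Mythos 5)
Your proposal is correct and takes essentially the same route as the paper: expand elements in the basis of \Cref{mu ana}, use that $\si$ fixes the $b_i\in K$ to get $\js=\cJ'B$ and hence $\ns=\cN'A$ via the norm, and descend the invariance statements (2), (4), (5) by averaging over $\La$ (the paper uses $\tr_{L|L'}$, which acts as the unit $m$ on $\La$-invariants). Three small slips to fix: the averaging requires inverting $m=|\La|$, not $p$ (and $m$ is already a unit in $A'$ since the residue characteristic is $p$); on $L'$ the norm $N_{L|K}$ coincides exactly with $N_{L'|K'}$ because restriction $G\to\Gal(L'|K')$ is an isomorphism, not a ``power twisted by $\La$''; and $\si$ genuinely commutes with every $\tau\in\La$, since $G$ and $\La$ are both normal in $\Gal(L|K')$ with trivial intersection, so the worry about $\tau\si\tau^{-1}=\si^{a_\tau}$ evaporates.
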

\begin{proof} Let $[L:L']=m=e_0f_0$, where $m$ is a positive integer coprime to $p$, $e_0$ is  the ramification of $L|L'$ and $f_0$ is the inertia degree of $L|L'$. It is enough to consider the two cases where $L|L'$ is either unramified or totally ramified. This can be seen by considering the two extensions $T|L'$ and $L|T$, where $T$ is the maximal unramified subextension of $L|L'$.  

Let  $\{b_i\}_{1 \leq i \leq m}$ be an $L'$-basis of $L$ as described in \Cref{mu ana}.

\begin{enumerate}
\item Let $x=\sum_{i=1}^m x_ib_i; x_i \in L'$ be an element of $L^{\times}$. Since $\si$ fixes each $b_i$ (by (B1)), we have
$$ \frac{\si(x)}{x} -1 = \sum_{i=1, x_i \neq 0}^m \lb \frac{\si(x_ib_i)}{x_ib_i} -1 \rb \frac{x_ib_i}{x}=\sum_{i=1, x_i \neq 0}^m \lb \frac{\si(x_i)}{x_i} -1 \rb \frac{x_ib_i}{x}$$

For each $i$ with $x_i \neq 0, \frac{\si(x_i)}{x_i} -1 \in \cJ'$ and $\frac{x_ib_i}{x} \in B$ (by (B4)).
Thus, $\js \sub \cJ'B$. The reverse direction is trivial.
\item It is clear that $(\js)^{\La}$ contains $\cJ'$. For the reverse direction, consider the action of $\tr_{L|L'}$ on $\js$. For $x$ as above, $$\tr_{L|L'}\lb \frac{\si(x)}{x} -1 \rb =  \sum_{i=1, x_i \neq 0}^m \lb \frac{\si(x_i)}{x_i} -1 \rb \tr_{L|L'}\lb \frac{x_ib_i}{x}\rb \in \cJ'$$
On the other hand, $\tr_{L|L'}$ acts on $\lb \js \rb^{\La}$ as multiplication by $m$. The rest follows from $(m, p)=1$. 
\item By 1, for any $x \in \js=\cJ'B$,  there exists $y \in \cJ'$ such that $ x \in (y)B$. Thus, $\ns=\lb N_{L|K}(\cJ'B)\rb=\lb N_{L'|K'}(\cJ')\rb A=\cN'A$
\item This follows from 2 and 3.
\item  For any $x \in B, \tr_{L|L'}\lb(\si-1)(x)\rb=(\si-1)\lb \tr_{L|L'}(x)\rb \in \cI'.$ The rest  of the proof is quite similar to the proof of 2.

\end{enumerate}

\end{proof}

\subsection{Main Results for $L'|K'$} Observe that $L'|K'$ and $L|K$ have the same defect. We have the analogues of the main results as follows.
\begin{thm}\label{hn'} $\cH'=\cN'$.
\end{thm}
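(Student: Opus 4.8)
The plan is to deduce $\cH' = \cN'$ directly from \Cref{hn}, which gives $\cH = \ns$ as ideals of $A$, together with the dictionary of \Cref{'} that translates invariants of $L|K$ into invariants of $L'|K'$ by taking $\La$-invariants. Since $\cH' := (\cH)^\La$ by definition (\ref{h'}), and since \Cref{'}(4) gives $(\ns)^\La = \cN'$, the equality $\cH = \ns$ immediately implies $(\cH)^\La = (\ns)^\La$, i.e. $\cH' = \cN'$. So the proof is essentially one line: apply $(-)^\La$ to both sides of the equation in \Cref{hn}.

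The only point that needs a word of justification is that taking $\La$-invariants is compatible with the relevant ideal-theoretic operations — i.e. that $(\cH)^\La$ and $(\ns)^\La$ genuinely are the ideals $\cH'$ and $\cN'$ of $A'$, not merely abstract fixed submodules. For $\cH'$ this is the definition. For $\cN'$ one invokes \Cref{'}(4), whose proof in turn rests on the averaging argument (trace by $\La$, divide by $m$, using $(m,p)=1$) already carried out there. I would therefore simply write: by \Cref{hn}, $\cH = \ns$ as ideals of $A$; applying $(-)^\La$ and using \eqref{h'} and \Cref{'}(4) yields $\cH' = (\cH)^\La = (\ns)^\La = \cN'$.

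There is essentially no obstacle here; the real content has been front-loaded into \Cref{hn} (the mixed-characteristic norm computation, including the delicate defect case handled via \Cref{js}, \Cref{best} and the $p>2$ filtered-module argument) and into \Cref{'} (the descent along the tame, degree-$m$ extension $L|L'$ via the basis of \Cref{mu ana}). If anything, the one caveat worth flagging is that \Cref{'} was stated for $p>2$ in the surrounding section, matching the running assumption of this section, so no separate treatment of $p=2$ is needed. I would keep the proof to two or three sentences and refer back to \Cref{hn} and \Cref{'} for all the substance.

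\begin{proof}
By \Cref{hn}, $\cH = \ns$ as ideals of $A$. Taking $\La$-invariants of both sides and using the definition $\cH' = (\cH)^\La$ from \eqref{h'} together with \Cref{'}(4), we obtain
\[
\cH' = (\cH)^\La = (\ns)^\La = \cN'. \qedhere
\]
\end{proof}
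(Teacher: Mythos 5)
Your proof is correct and follows exactly the paper's own route: the paper also deduces \Cref{hn'} directly from \Cref{hn} together with \Cref{'}, using $\cH'=(\cH)^{\La}$ and $(\ns)^{\La}=\cN'$. Nothing further is needed.
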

\begin{proof}This is a direct consequence of \Cref{hn} and \Cref{'}.\end{proof}
\begin{thm}\label{comm dia'} By taking $\La$-invariant parts of the commutative diagram \begin{center}$
\begin{tikzcd}[column sep=large]
\oml/\js \oml \arrow{r}{\varphi_\si}[swap]{\cong}
\arrow{d}[swap]{\dn}
&\js/\js^2  \arrow[hook]{d}{\overline{N_{L|K}}}\\
\omk/(\is \cap A )\omk   &\cH/\cH^2 \arrow{l}{\rsw}
\end{tikzcd}$
\end{center}
we have the following commutative
diagram for $L'|K'$:

\begin{center}$
\begin{tikzcd}[column sep=large]
\om_{B'|A'}^1/\cJ' \om_{B'|A'}^1 \arrow{r}{\varphi'}[swap]{\cong}
\arrow{d}[swap]{\dn'}
&\cJ'/\cJ'^2  \arrow[hook]{d}{\overline{N_{L'|K'}}}\\
\om_{A'}^1/(\cI' \cap A')\om_{A'}^1   &\cH'/\cH'^2 \arrow{l}{\rsw'}
\end{tikzcd}$
\end{center}

\noindent The maps $\dn, \dn'$ are induced by the norm maps $N_{L|K}$ and $N_{L'|K'}$, while  the map $\rsw'$ is the restriction of the map $\rsw$ to $\cH'/\cH^2$.\end{thm}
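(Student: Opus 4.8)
The plan is to obtain the second diagram simply by applying the functor $M\mapsto M^{\La}$ of $\La$-invariants to the diagram of \Cref{comm dia}, and then to identify each vertex and each arrow of the result with the asserted data for $L'|K'$.

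First I would set up the formalism. Since $L|K$ and $L'|K'$ are both Galois of degree $p$, both $G=\Gal(L|K)=\langle\si\rangle$ and $\La=\Gal(L|L')$ are normal in $\Gal(L|K')$; as $|G|=p$ and $|\La|=m$ are coprime and $G\cap\La=\Gal(L|K\cdot L')=\{1\}$, this forces $\Gal(L|K')=G\times\La$. In particular $\si$ commutes with every $\tau\in\La$ and with $N=N_{L|K}$, so all of $\oml,\omk,\js,\cH$, the ideal $\is\cap A$, and the maps $\varphi_\si,\dn,\ol N,\rsw$ of \Cref{comm dia} are $\La$-equivariant. Since $m\mid p-1$ is prime to the residue characteristic, $v(m)=0$, so $m$ is a unit in $A,B,A',B'$; hence for any module over one of these rings carrying a semilinear $\La$-action the idempotent $e=\frac1m\sum_{\tau\in\La}\tau$ splits off the invariants, $(-)^{\La}$ is exact, and it commutes with forming quotients. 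Applying $(-)^{\La}$ to the diagram of \Cref{comm dia} therefore yields a commutative diagram whose top arrow is still an isomorphism and whose right vertical arrow is still injective.

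Next I would identify the four corners. For the right column, \Cref{'}(1) gives $\js=\cJ'B$, hence $\js^k=\cJ'^kB$; since $B^{\La}=B'$ and $\tr_{L|L'}(B)\sub B'$, the trace argument used in the proof of \Cref{'} shows that the $\La$-invariants of an ideal of $B'$ extended to $B$ are again that ideal, so $(\js^k)^{\La}=\cJ'^k$ and $(\js/\js^2)^{\La}=\cJ'/\cJ'^2$; combining \Cref{hn}, \Cref{'}(3) and \Cref{hn'} gives $\cH=\ns=\cN'A=\cH'A$, so likewise $(\cH/\cH^2)^{\La}=\cH'/\cH'^2$. For the left column I would invoke — as in the Artin--Schreier treatment of \cite{V}, using that $K|K'$ and $L|L'$ are tamely ramified and hence the logarithmic differentials base-change along them — that $\om^1_{A'}\xrightarrow{\sim}(\omk)^{\La}$ and $\om^1_{B'|A'}\xrightarrow{\sim}(\oml)^{\La}$; together with $\is\cap A=(\cI'\cap A')A$ (which follows from $(\is)^{\La}=\cI'$ of \Cref{'}(5) and $\is=\js=\cJ'B$ in the defect case, resp.\ the explicit description of $\is$ via \Cref{best}/\Cref{mu} in the defectless case) and exactness of $(-)^{\La}$, this identifies the bottom-left corner with $\om^1_{A'}/(\cI'\cap A')\om^1_{A'}$ and the top-left with $\om^1_{B'|A'}/\cJ'\om^1_{B'|A'}$.

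Finally I would match the arrows. Under these identifications $(\varphi_\si)^{\La}$ sends $\dl x\mapsto\frac{\si(x)}{x}-1$ for $x\in L'^{\times}$, which is precisely $\varphi'$; because the restriction $G\to\Gal(L'|K')$ is an isomorphism, $N_{L|K}$ restricts on $L'$ to $N_{L'|K'}$, so $(\ol N)^{\La}=\ol{N_{L'|K'}}$ and $(\dn)^{\La}=\dn'$; and $(\rsw)^{\La}$ is, by construction, the restriction of $\rsw$ to $\cH'/\cH'^2$, i.e.\ $\rsw'$. Commutativity, the isomorphism property of $\varphi'$, and the injectivity of $\ol{N_{L'|K'}}$ are inherited from \Cref{comm dia} through exactness of $(-)^{\La}$. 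The step I expect to be the real obstacle is the left-column identification: proving $(\omk)^{\La}=\om^1_{A'}$, $(\oml)^{\La}=\om^1_{B'|A'}$, and that $\is\cap A$ is extended from $A'$ — all of which rest on the tameness of $K|K'$ and $L|L'$ and are carried out exactly as in \cite{V}. The defect case is handled in the same manner, replacing $\oml,\omk$ by $\OL,\OK$ and using $\OL=\varinjlim_{\al\in\mathscr{S}_0}\Om^1_{A[\al']|A}$ from \Cref{fil}.
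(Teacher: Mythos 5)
Your proposal is correct and follows essentially the same route as the paper: the paper's own (very terse) proof likewise takes $\La$-invariants of the diagram in \Cref{comm dia} and identifies the vertices and arrows via \Cref{'} and the properties of $\rsw$. You have simply filled in the details the paper leaves implicit (the decomposition $\Gal(L|K')=G\times\La$, exactness of $(-)^{\La}$ since $m$ is a unit, and the identification of the left-hand column), all consistent with the intended argument.
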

\begin{proof}  Validity and properties of the map $\rsw'$ follow from the commutativity of the first diagram and properties of the map $\rsw$. The rest follows from \Cref{'}.
\end{proof}

\section{Generalizing the Results of \cite{V} to Defect Extensions of Rank $>1$}

In \cite{V}, we proved the main results under the assumption that the Artin-Schreier extension $L|K$ is defectless or has valuation of rank $1$. However, we observed the following.

\begin{itemize}
\item In the case $p=2$, the results were true regardless of the rank of the valuation. This led us to believe that the results should be true for defect extensions of higher rank, even when $p>2$.
\item Many of the key lemmas, such as \Cref{js}, were proved without using the condition on the rank. 
\item If we could prove the result $\cH=\ns$ independent of the rank, the rest would follow.
\end{itemize}

We can easily modify  the proof of \Cref{hn} presented in 4.2 to fit the Artin-Schreier case. Similarly, we can imitate the proof of \Cref{diff} and thus, the main results of \cite{V} can be generalized  to the higher rank defect case.

\vspace{15pt}
\noindent \textbf{Acknowledgments:} 
 I am very grateful to 
Professor Kazuya Kato (University of Chicago) for his invaluable advice, helpful
feedback during the writing process, and his constant support
during the project.

\medskip
\medskip
\medskip \noindent Vaidehee Thatte\\Department of Mathematics and Statistics,\\
Queen's University,\\ 48 University Ave.\\ Kingston, ON Canada, K7L 3N6\\
\href{http://mast.queensu.ca/\textasciitilde vaidehee/}{http://mast.queensu.ca/\textasciitilde vaidehee/}\\
Email: \texttt{vaidehee@mast.queensu.ca}

\vfill
\end{document}